\documentclass[a4paper,10pt]{article}
\usepackage[utf8]{inputenc}
\usepackage{fullpage}
\usepackage{graphicx,xcolor}
\usepackage{amsmath,amsfonts,amsthm,amssymb}
\usepackage[final]{showkeys}
\usepackage{hyperref}
\usepackage{xfrac}
\usepackage[color=gray, backgroundcolor=gray!10, textwidth=2cm, textsize=footnotesize]{todonotes}
\usepackage{mathrsfs}
\usepackage{physics}
\usepackage{enumerate}
\usepackage{enumitem}

\newcommand\weakto{\rightharpoonup}
\newcommand\eps{\varepsilon}
\newcommand\R{\mathbb{R}}

\newcommand\N{\mathbb{N}}

\newcommand\calL{\mathcal{L}}

\newcommand\dt{{\mathrm d}t}



\usepackage{mathtools}                  
\usepackage{cleveref}
\newcommand{\less}{<}
\newcommand{\gtr}{>}
\newcommand{\hm}{\mathcal{H}}
\newcommand{\lm}{\mathcal{L}}
\newcommand{\diff}{\mathrm{D}}

\newcommand{\bv}{\mathrm{BV}}
\newcommand{\h}{\mathrm{H}}
\newcommand{\w}{\mathrm{W}}
\newcommand{\lp}{\mathrm{L}}
\newcommand{\dw}{\mathrm{d}_{W}}

\DeclareMathOperator*{\argmin}{arg\,min}


\usepackage{tikz}
\usepackage{mathdots}
\usepackage{yhmath}
\usepackage{cancel}
\usepackage{color}
\usepackage{siunitx}
\usepackage{array}
\usepackage{multirow}
\usepackage{amssymb}
\usepackage{textcomp, gensymb}          
\usepackage{tabularx}
\usepackage{extarrows}
\usepackage{booktabs}
\usetikzlibrary{fadings}
\usetikzlibrary{patterns}
\usetikzlibrary{shadows.blur}
\usetikzlibrary{shapes}





	


\newtheorem{theorem}{Theorem}[section]

\newtheorem{proposition}[theorem]{Proposition}
\newtheorem{lemma}[theorem]{Lemma}
\newtheorem{remark}[theorem]{Remark}

\numberwithin{equation}{section}
\usepackage{fancyhdr}\pagestyle{fancy}
\fancyhf{}

\AtBeginDocument{\RenewCommandCopy\qty\SI}
\ExplSyntaxOn
\msg_redirect_name:nnn { siunitx } { physics-pkg } { none }
\ExplSyntaxOff

\makeatletter
\newcounter{author}
\renewcommand*\author[1]{%
  \stepcounter{author}%
  \ifnum\c@author=1
    \gdef\@author{#1}%
  \else
    \xdef\@author{\unexpanded\expandafter{\@author\and#1}}%
  \fi
  \csgdef{author@\the\c@author}{#1}}
\newcommand*\email[1]{%
  \csgdef{email@\the\c@author}{#1}}
\newcommand*\address[1]{%
  \csgdef{address@\the\c@author}{#1}}
\AtEndDocument{%
  \xdef\author@count{\the\c@author}%
  \c@author=1
  \print@authors}
\newcommand*\print@authors{%
  \ifnum\c@author>\author@count
  \else
    \print@author{\the\c@author}%
    \advance\c@author by 1
    \expandafter\print@authors
  \fi}
\newcommand*\print@author[1]{%
  \par\medskip
  \begin{tabular}{@{}l@{}}%
    \textsc{\csuse{author@#1}}\\
    \csuse{address@#1}\\
    \textit{E-mail address}:
    \href{mailto:\csuse{email@#1}}{\csuse{email@#1}}
  \end{tabular}}
\makeatother

\author{Francesco Colasanto}
\address{DiMaI, Universit\`a di Firenze\\ V.le G.B. Morgagni 67/A, 50134 Firenze,
Italy}
\email{francesco.colasanto@unifi.it}

\author{Pascal Steinke}
\address{Institute for Applied Mathematics, University of Bonn\\Endenicher Allee 60, 53115 Bonn, Germany}
\email{steinke@iam.uni-bonn.de}

\begin{document}
\begin{center}
  {\Large
{Second-Order $\Gamma$-limit for the Cahn--Hillard Functional with Dirichlet Boundary Conditions}}\\[5mm]
{\today}\\[5mm]
Francesco Colasanto and Pascal Steinke
\\[3mm]
\begin{minipage}[c]{0.8\textwidth}
This paper addresses the asymptotic development of order $2$ by $\Gamma$-convergence of the Cahn-Hillard functional with Dirichlet boundary conditions, where the potential has subquadratic growth near the wells. 
\end{minipage}
\end{center}

\section{Introduction}
In this work we analyze the second-order asymptotic development via $\Gamma$-convergence (for the original definition, see \cite{de_giorgi_tullio_75_gamma_convergence}, and for the higher-order definition, see \Cref{sct:gamma_limit}) of the Cahn--Hillard functional $\mathcal{F}_{\eps}^{(0)}:\lp^1(\Omega)\to [0,\infty]$ with Dirichlet boundary condition, given for $ \eps > 0 $ by
\begin{equation}\label{e:Cahn-Hillard functionals definition}
    \mathcal{F}_{\eps}^{(0)}(u) \coloneqq
    \begin{cases}
        \int_{\Omega}W(u)+\eps^2|\nabla u|^2 \dd{x},  & \textup{if  $u\in \h^1(\Omega)$  and $u=g_{\eps}$ on $\partial \Omega$}, \\
        \infty,  & \textup{otherwise}.
    \end{cases}
\end{equation}
In \eqref{e:Cahn-Hillard functionals definition}, the function $W:\R \to [0,\infty)$ is a continuous double-well potential, which means that $ W( x) = 0 $ holds if and only if $ x \in \{ a, b \} $ with $a<b$ and sub-quadratic growth near the wells. $\Omega$ is a $C^2$ open  and bounded subset of $\R^n$, while $g_{\eps}\in \h^{1/2} ( \partial \Omega )$ for every $\eps\in (0,\infty)$.

The Cahn--Hilliard functional 
\begin{equation}\label{e:Modica-Mortola}
    E^{(1)}_{\eps}(u)=\begin{cases}
        \int_{\Omega}\frac{1}{\eps}W(u)+\eps|\nabla u|^2 \dd{x},  & \textup{if $u\in \h^1(\Omega)$}, \\
        \infty , & \textup{otherwise},
    \end{cases}
\end{equation}
occurs in the modelling of many physical phenomena, from phase transition and separation (cf. \cite{AllenCahn79,CahnHillard58,CahnElliotNovick,BloweyElliott}) to processes of nucleation and coarsening (cf. \cite{CahnHillard59,BatesFife}), where $a$ and $b$ represent the \textit{phases} of the mechanical system.
It was first studied in \cite{ModicaMortola77} by Modica and Mortola, were they proved $ \Gamma $-convergence to the perimeter functional
\begin{equation}\label{e:Gammaconv modica mortola without constraint}
    E^{(1)} (u)\coloneqq \begin{cases}
     \frac{C_W}{b-a}|\diff u|(\Omega), & \textup{if  $u\in \bv(\Omega;\{a,b\})$}, \\
        \infty, & \textup{otherwise},
    \end{cases}
\end{equation}
for a constant $ C_W$ depending on the potential $ W $. 
In the subsequent work \cite{Modica87}, analogous results were obtained in the \textit{mass constrained} case, see also \cite{Sternberg88,Owen88,FonsecaTartar}, and \cite{baldo_90_multiphase} for the multiphase case.

The \textit{Dirichlet} case is more involved and was first studied in \cite{RubinsteinSternberg90} under the assumption that $W\in C^2(\R)$ with $W''(a),W''(b)>0$ (which equates to quadratic growth at the wells) and suitable boundary data $g_{\eps}$ (see \cite[Sct.~2]{RubinsteinSternberg90}). The authors showed that the first-order $ \Gamma $-limit is given by
\begin{equation}\label{e:first order gamma limit dirchlet case}
    \Gamma\textup{-}\lim_{\eps\to 0} \frac{\mathcal{F}_\eps^{(0)}(u)}{\eps} = \begin{cases}
     \frac{C_W}{b-a}|\diff u|(\Omega) +\int_{\partial \Omega} \textup{d}_W(u,g)\dd{\hm^{n-1}},
     & \textup{if $u\in \bv(\Omega;\{a,b\})$}, \\
        \infty , & \textup{otherwise},
    \end{cases}  
\end{equation}
for $\mathcal{F}_{\eps}^{(0)}$ defined in \eqref{e:Cahn-Hillard functionals definition}. Here $g$ is the limit of $g_{\eps}$ in $\lp^1(\partial \Omega)$ as $\eps\to 0$ and $\dw $ is the geodesic distance defined by 
\begin{equation}\label{e:geodesic distance}
   \dw(r,s)\coloneqq
       \abs{\int_r^s 2\sqrt{W(\rho)}\dd{\rho} }.
\end{equation}
The inhomogeneous has been covered in \cite{ansini_braides_chiad_03_gradient_theory}, who also proved a \emph{fundamental estimate} for the first-order functional, which is currently not available for the second-order functional.
In \cite{gazoulis_24_gamma_convergence} the author used fundamental estimate for an explicit representation of the limiting functional in the multiphase case. 

The first work covering the second-order asymptotic development for Cahn--Hillard like functionals is \cite{DalMasoFonsecaLeoni15}, where the authors proved the triviality of the second order $\Gamma$-limit in the mass-constrained  subquadratic case with the addition of constant Dirichlet data $ 1$, crucial for the use of classical symmetrization techniques in their proof. In \cite[Thm.~1.1, Thm.~1.2]{Leoni-Murray15} and \cite[Thm.~1.3]{leoni_murray_19_second_order_2}, the authors removed the Dirichlet boundary condition and proved the non-triviality of the second order $\Gamma$-limit, both for quadratic and sub-quadratic wells.  

In \cite{fonseca2025secondordergammalimitcahnhilliardfunctional1} and \cite{fonseca2025secondordergammalimitcahnhilliardfunctional} the authors study the second order asymptotic development of the functionals $\mathcal{F}_{\eps}^{(0)}$, for potentials $W$ with quadratic growth near the wells, assuming that the unique minimizer of the first order functional is the constant function $ b $. 
In the first work \cite{fonseca2025secondordergammalimitcahnhilliardfunctional1} the authors crucially assume that the boundary data is bounded away from one of the wells, which means that there exists some $ \alpha_-\in (a,b] $ such that
\begin{equation}
\label{eq:ass_paper_1}
    a<\alpha_- \leq g_\eps(x)\leq b.
\end{equation}
Note that this assumption does not automatically imply that the constant function $ b $ is the unique minimizer of the first order functional (\ref{e:first order gamma limit dirchlet case}), see \Cref{fig:counterexample}.

\begin{figure}
\label{fig:counterexample}

\tikzset{every picture/.style={line width=0.75pt}} 

\begin{tikzpicture}[x=0.75pt,y=0.75pt,yscale=-1,xscale=1]
uncomment if require: \path (50,300); 

\draw  [fill={rgb, 255:red, 80; green, 227; blue, 194 }  ,fill opacity=1 ] (317,135.1) .. controls (340.2,133.9) and (377.8,127.9) .. (376.2,144.7) .. controls (374.6,161.5) and (359,160.85) .. (298,161.25) .. controls (237,161.65) and (192.5,170.85) .. (192.5,145.25) .. controls (192.5,119.65) and (293.8,136.3) .. (317,135.1) -- cycle ;
\draw  [fill={rgb, 255:red, 126; green, 211; blue, 33 }  ,fill opacity=1 ] (319.8,60.7) .. controls (339.8,50.7) and (447.2,41.3) .. (455,54.3) .. controls (462.8,67.3) and (434.6,225.1) .. (416.6,245.5) .. controls (398.6,265.9) and (320,182.9) .. (322.6,153.1) .. controls (325.2,123.3) and (299.8,70.7) .. (319.8,60.7) -- cycle ;
\draw [color={rgb, 255:red, 208; green, 2; blue, 27 }  ,draw opacity=1 ][line width=2.25]    (321.94,135.45) .. controls (323.91,150.41) and (321,143.86) .. (323.08,160.02) ;

\draw (250,137.9) node [anchor=north west][inner sep=0.75pt]    {$u=a$};
\draw (364.17,125.73) node [anchor=north west][inner sep=0.75pt]    {$u=b$};
\draw (185,166.4) node [anchor=north west][inner sep=0.75pt]    {$|g -a|\ll 1\ $};
\draw (461.67,129.73) node [anchor=north west][inner sep=0.75pt]    {$|g-b|\ll 1$};

\end{tikzpicture}

\caption{If the boundary data $ g $ is close to $ a $ on a piece of large surface, but small volume, then a transition at the red line is energetically more favourable than no transition.}

\end{figure}
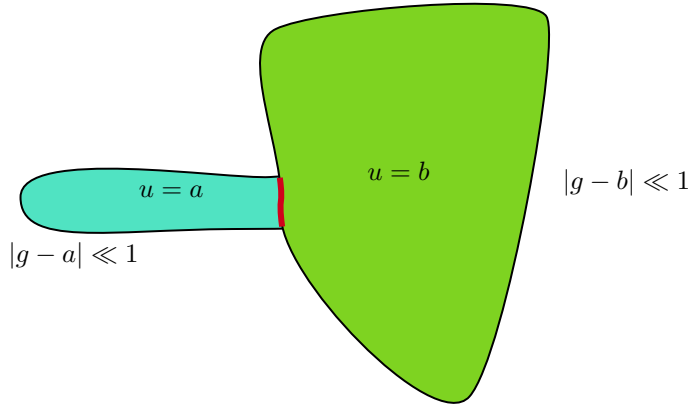

The first non-trivial rescaling is then of order $ \eps $ and the limiting functional is given by
\begin{equation*}
    \mathcal{F}^{(2)} ( b ) 
    \coloneqq
    \int_{ \partial \Omega }
    -\kappa ( y ) 
    \int_0^\infty 
    2 \sqrt{W ( z_{g(y)} ( s ) ) } z_{g(y)}' ( s ) s 
    \dd{ s }
    \dd{ \hm^{n-1} ( y ) },
\end{equation*}
where $ \kappa $ is the mean curvature of $ \partial \Omega $. Here, for a given $ \alpha \in \R $, the function $ z_\alpha $ is defined as the unique solution to the Cauchy problem
\begin{equation}
    \label{eq:cauchy_problem}
    \begin{cases}
        z_\alpha ' ( s )
        = 
        \sqrt{ W ( z_\alpha ( s ) )},
        \\
        z_\alpha ( 0 ) = \alpha,
    \end{cases}
\end{equation}
see also \Cref{prop:cauchy_problem}.
Note that the function $ z_\alpha ( \cdot / \eps ) $ is the optimal transition profile on the scale $ \eps $ between $ \alpha $ and $ b $ and is used in the construction for the recovery sequence in previous works for the first order $ \Gamma $-limit.
In the subsequent work \cite{fonseca2025secondordergammalimitcahnhilliardfunctional}, the assumption (\ref{eq:ass_paper_1}) is removed, which leads to a rescaling of order $ \log(1/\eps) \eps $ and a different limiting energy, which only sees parts of the boundary with negative mean curvature.

In this paper we want to focus on the case of a potential $ W $ with subquadratic growth at the wells, by which we mean that there exists some $ q \in (0,1) $ such that $ W ( x ) \sim \abs{x-a}^{1+q}$ for $ \abs{x-a} \ll 1$ and $ W ( x ) \sim \abs{x-b}^{1+q} $ for $ \abs{ x - b } \ll 1 $. 
The key difference to the case of quadratic growth is that solutions to the Cauchy problem (\ref{eq:cauchy_problem}) reach the well $ b $ in finite time, which can be explicitly written as
\begin{equation*}
    T^{(\alpha)} = \int_\alpha^b \frac{1}{\sqrt{W(s)}} \dd{ s }.
\end{equation*}
Since the solution in the quadratic case only approaches $ b $ with exponential speed, but never reaches the well, the construction of the recovery sequence in the subquadratic case simplifies. Moreover we were able to explicitly analyze the behaviour of minimizers of the weighted one-dimensional Cahn-Hilliard energy by carefully studying the corresponding Euler--Lagrange equation, see \Cref{lem:properties_of_1d_minimizers}, which is a major simplification compared to \cite{fonseca2025secondordergammalimitcahnhilliardfunctional1}.
In the end, we are able to show that on the scale $ \varepsilon $, we get a non-trivial second-order $ \Gamma $-limit. This is a surprising difference to the quadratic case, where regions of the boundary with boundary data $ g = a $ exhibit an $ \eps \log ( 1/\eps ) $ scaling, and regions with $ g > a + \delta $ an $ \eps $-scaling. The limiting functional however stays the same, see \Cref{prop:recovery_seq_second_order} and \Cref{thm:liminf_second_order}, with less strict assumptions than in the quadratic case. 
The reason for the different scaling is that in the quadratic case, additional energy is necessary to move the optimal transition profile out of the well since the unique solution $ z_a $ of the Cauchy problem (\ref{eq:cauchy_problem}) is constant. In the subquadratic case however, where the solutions are not unique, there exists a solution which immediately moves out of the well.

We also want to comment on the assumptions we make. Currently we have certain smoothness and regularity assumptions on $ g $ and $ g_\eps $ (see \Cref{sct:assumptions}) which prohibit boundary data of the form $ g = a \chi_A + b \chi_{ \partial \Omega \setminus A } $. Even though we do not rely on the regularity of $ g $ and could in principle only assume $ g \in \lp^1 ( \partial \Omega ) $, the assumptions
\begin{equation}
\label{eq:necessary_assumptions}
    \int_{ \partial \Omega}
    \frac{ \dw ( g_\eps , g ) }{\eps }
    \dd{ \hm^{n-1} }
    \to 0 
    \quad \text{and} \quad
    \eps \int_{ \partial \Omega } \abs{ \nabla_\tau g_\eps }^2
    \dd{ \hm^{n-1} }
    \to 
    0
\end{equation}
are necessary in order to apply our slicing techniques. 
Since $ g_\eps $ is the boundary data of an $ \h^1 ( \Omega ) $ function, it has to have at least $ \h^{1/2}( \partial \Omega ) $-regularity, thus it is necessary that $ g_\eps $ is a regularization of $ g $.
By flattening and applying a standard mollifier $ g_\eps = \eta_{\rho_\eps} \ast g $, we note that $ \rho_\eps $ would have to satisfy $ \rho_\eps / \eps \to 0 $ to satisfy the first assumption in (\ref{eq:necessary_assumptions}). For the second assumption in (\ref{eq:necessary_assumptions}) however, we need the reverse convergence $ \eps / \rho_\eps \to 0 $, which is not feasible.

The strategy of the proof is similar as in \cite{fonseca2025secondordergammalimitcahnhilliardfunctional1}. By using the Area Formula and Fubini, we reduce the $ n$-dimensional functional to a weighted one-dimensional functional by writing
\begin{equation*}
    \int_{ \{ x \in \Omega \colon \mathrm{dist} ( x, \partial \Omega ) \leq \delta \} }
    f ( x ) 
    \dd{ x }
    =
    \int_{ \partial \Omega }
    \int_0^\delta 
    f(  \Phi (y, t ) )
    \det J_\Phi ( y, t )
    \dd{ t }
    \dd{ \hm^{n-1 } ( y ) },
\end{equation*}
where $ \Phi (y, t ) \coloneqq y + t \nu ( y ) $ for the inner unit normal $ \nu $. We construct the recovery sequence by choosing optimal transition profiles in direction $ \nu $. For the $\liminf$-inequality, we have to carefully study minimizers of the weighted one-dimensional functional 
\begin{equation*}
    \int_0^{\delta } (W ( v ) + \eps^2 ( v')^2 ) \omega \dd{ t }.
\end{equation*}
Since the second-order functional will not be non-negative, we will have to quantify the lower bounds for the one-dimensional functional in order to later justify pulling the limit inferior into the integral over the boundary of $ \Omega $.

A future question of interest is what the second-order $ \Gamma $-limit looks like for less regular $ g $. Our current conjecture is that if $ g $ is of the form $ a \chi_A + b \chi_{\partial \Omega \setminus A } $, the limiting functional, with the same scaling $ \eps $, is proportional to the size of the jump set $ J_g$. 
However a different construction for the recovery-sequence will be necessary. The most interesting question remains what the second-order functional looks like when not assuming that $ b $ is the unique minimizer, which has for example been discussed in \cite[Rmk.~1.3]{fonseca2025secondordergammalimitcahnhilliardfunctional1}.

The structure of the paper is as follows. In \Cref{sct:prelim}, we go over the definition of higher-order $ \Gamma $-limits and the assumptions on the potential, the domain and the boundary data. We continue in \Cref{sct:1_dim} with the $ \Gamma $-limit in the one-dimensional weighted case, which is the core of this work. Then we show in \Cref{sct:multidim_case} how we can use slicing to transfer these results to the multidimensional case. In \Cref{sct:appendix}, we include those proofs which are very similar to already existing literature and thus of lesser interest to the reader.

\section{Preliminaries} 
\label{sct:prelim}

\subsection{Higher-order Gamma-limits}
\label{sct:gamma_limit}

We want to start by discussing the asymptotic development of functionals via $\Gamma$-convergence introduced in \cite{anzellotti_baldo_93_asymptotic_gamma_convergence}. Given a metric space $X$, a family of functionals ${F}_{\eps}:X\to [-\infty,\infty]$ and of nonnegative real numbers $\delta_{\eps}$  for $\eps>0$, the \textit{first order asymptotic development} of ${F}_{\eps}$ \textit{with respect to} $\delta_{\eps}$ \textit{on} $X$ is
\begin{equation*}
{F}_{\eps}={F}^{(0)}+\delta_\eps {F}^{(1)} +o(\delta_\eps) \quad  \textup{on $X$}
\end{equation*}
if and only if $\delta_{\eps}\to 0$ as $\eps\to 0$ and  ${F}^0,{F}^1:X\to [-\infty,\infty]$ are such that 
\begin{equation*}
      \textup{$\Gamma$-$\lim_{\eps\to 0} {F}_{\eps}={F}^{(0)}$ on $X$, \; \;   $m_0:=\inf_X {F}^{(0)}\in \R$ \; \; and \; \;  $\Gamma$-$\lim_{\eps\to 0} {F}^{(1)}_{\eps}={F}^{(1)}$ on $\mathcal{U}^0$}
\end{equation*}
where 
\begin{equation}\label{e:first order apporox functionals}
   {F}_{\eps}^{(1)}:=\frac{{F}_{\eps}-m_0}{\eps} 
\end{equation}
and $\mathcal{U}^0:=\{x\in X \; : \; {F}^{(0)}(x)=m_0\}$. More generally, if $j\in \N\setminus\{1\}$ and $\{\delta^{(1)}_\eps,\dots,\delta^{(j)}_\eps \}$ are families of nonnegative real numbers for $\eps>0$, we say that the \textit{$j$th-order asymptotic development of ${F}_{\eps}$} \textit{with respect to} $\{\delta^{(1)}_\eps,\dots,\delta^{(j)}_\eps \}$ \textit{on} $X$ is
\begin{equation}\label{e:first asymptotic exapnsion gamma conv.}
{F}_{\eps}={F}^{(0)}+\delta_{\eps}^{(1)}{F}^{(1)}+\cdots+ \delta^{(j)}_\eps {F}^{(j)}+ o(\delta^{(j)}_{\eps})  \quad \textup{on $X$}
\end{equation}
if and only if  $\sigma^{(k)}:=\delta_{\eps}^{k+1}/\delta_{\eps}^{k}\to 0 $ as $\eps \to 0$, for every $k=1,\dots,j-1$ and ${F}^{(0)},{F}^{(1)},\dots,{F}^{(j)}:X\to [-\infty,\infty]$ are such that 
\begin{equation*}
{F}^{(k)}_{\eps}={F}^{(k)}+\delta_{\eps}^{(k+1)}{F}^{(k+1)}+o(\delta^{(k+1)}_{\eps})  \quad \textup{on $\mathcal{U}^{k-1}$}
\end{equation*}
for every $k=0,\dots,j-1$, where $\mathcal{U}^{-1}:=X$, ${F}^{(0)}_{\eps}:={F}_{\eps}$, 
\begin{equation}\label{e:def set of k-minimizers}
m_k:=\inf_{\mathcal{U}^{k-1}}{F}^{(k)}, \quad  \quad 
        \mathcal{U}^k:=\{x\in \mathcal{U}^{k-1} \; : \; {F}^{(k)}(x)=m_k\}
\end{equation}
and 
\begin{equation*}
   {F}^{(k+1)}_{\eps}:=\frac{{F}^{(k)}_{\eps}-m_{k}}{\sigma^{(k)}_{\eps}} \quad \quad \textup{for every $k=0,\dots,j-1$.}
\end{equation*}
This notion of asymptotic expansion not only provides a criterion for selecting minimizers of ${F}^{(0)}$, because every limit point of minimizers of the family ${F}_{\eps}$ has to be a minimizer of ${F}^{k}$ on $\mathcal{U}^{k-1}$, recalling that $\mathcal{U}^{j}\subseteq \mathcal{U}^{j-1} \subseteq  \cdots \subseteq \mathcal{U}^0 \subseteq \mathcal{U}^{-1}=X$, but also gives the following asymptotic formula for the minima
\begin{equation}\label{e:asymptotic expansion of minima}
    m_{\eps}=m_0+\delta^{(1)}_{\eps}m_1+\cdots +\delta^{(j)}_{\eps}m_{j}+o(\delta^{(j)}_\eps) 
\end{equation}
where $m_{\eps}:=\inf_X {F}_{\eps}$.

\subsection{Assumptions}
\label{sct:assumptions}
As mentioned in the introduction, we are concerned with functional (\ref{e:Cahn-Hillard functionals definition}).
Throughout the paper, we will assume that $ \Omega \subseteq \R^n $ is open and bounded with $ C^2$-boundary. Moreover we want to ignore the issues of possible escape to infinity and assume throughout the paper that the boundary data $ g_\eps $ maps to the interval $ [a,b] $, so that we can by a truncation argument assume that $ u_\eps $ maps to $ [a,b] $ as well. For the potential $ W $, we make the following two assumptions.
\begin{enumerate}[label=(\roman*)]
\item $W^{-1}({0})=\{a,b\}$ with $a<b$ and $W\in C^2(\R\setminus \{a,b\})$.
\item There exists $q\in (0,1)$  such that 
\begin{equation}\label{e:limite subquadratico}
    \lim_{s\to a}W''(s)|s-a|^{1-q} \in (0, \infty ) \quad\text{ and }\quad \lim_{s\to b}W''(s)|s-b|^{1-q}\in (0,\infty ). 
\end{equation}
\end{enumerate}
For the boundary data, we assume that $ g \in C^2 ( \partial \Omega ) $ such that $ b $ is the unique minimizer of the perimeter functional (\ref{e:first order gamma limit dirchlet case}). As a consequence (see \Cref{prop:curvature_b-unqie_min}), we have that $ \{ g = a \} \subseteq \{ \kappa \leq 0 \} $. Due to limitations of the techniques we apply in this paper, we will have to make the stronger assumption that $ \{ g=a \} \subseteq \{ \kappa < 0 \} $, and due to the continuity of $ g $, we consequently already have that the inclusion holds uniform, which means that there exists $ \alpha_- > a $ and $ \kappa_0 < 0 $ such that 
\begin{equation} 
\label{eq:uniform_inclusion}
\{ g \leq \alpha_-\} \subseteq \{ \kappa < \kappa_0 \}.
\end{equation}
The problem when trying to remove this assumption is the following. For the $ \Gamma $-lower bound, we reduce the domain via slicing to a $ 1 $-dimensional interval $ [0, \delta ]$. To argue that an immediate transition from $ g_\eps $ to $ b $ at $ 0 $ here is already optimal, we have to control the boundary value at $ \delta $ and argue that it is already close to $ b $, which has been proven in \cite[Thm.~4.9]{fonseca2025secondordergammalimitcahnhilliardfunctional1}. The proof relies crucially on a result by Sternberg and Zumbrum \cite{sternberg_zumbrun_98_connectivity_of_phase_boundaries} and on a result by Caffarelli and Córdoba \cite{caffarelli_cordoba_95_unfirom_convergence_of_a_singular_perturbation_problem} who show uniform convergence of phase boundaries in Cahn--Hilliard like perturbation problems. The latter result is however not quantitative since its proof relies on a contradiction argument, meaning that given $ \delta > 0 $, we only know that $ \abs{u_\eps ( \delta )-b} \ll 1 $ for $ \eps > 0 $ sufficiently small in an unquantified dependence on $ \delta $. The inclusion (\ref{eq:uniform_inclusion}) ensures that for $ \delta > 0 $ \emph{fixed} and sufficiently small, the slices will still have negative curvature, which will be crucial to our arguments.

We assume that $ g_\eps \in \h^{1}( \partial \Omega ) $ is a suitable approximation of $ g $ in the sense that there exist $ a < \alpha_-  < b $ and $ \kappa_0 < 0 $ such that for all $ \eps > 0 $ sufficiently small, we have for $ \hm^{n-1}$-almost every $ y \in \partial \Omega $ that
\begin{equation}
\label{eq:assumption_bdry_data_distinction}
    g_\eps ( y ) \in [\alpha_-, b]
    \quad \text{or} \quad
    \kappa(y) \leq \kappa_0.
\end{equation}
If $ g_\eps $ approximates $ g $ suitably, this is a consequence of inclusion (\ref{eq:uniform_inclusion}).

Moreover, we assume that $ g_\eps $ converges to $ g $ fast enough, but its derivative does not explode too fast such that $ g_\eps \to g $ $ \hm^{n-1}$-pointwise almost everywhere,
\begin{align}
    \label{eq:geps_to_g}
    \int_{ \partial \Omega }
    \frac{
    \dw ( g_\eps ( y ) , g ( y ) ) }{\eps}
    \dd{ \hm^{n-1 } ( y ) }
    & \to 0 
    \shortintertext{and }
    \label{eq:ass_tang_deriv_squared}
    \eps \int_{ \partial \Omega}
    \frac{\abs{ \nabla_\tau g_\eps ( y ) }^2 }{W(g_\eps ( y ) ) } 
    \dd{ \mathcal{H}^{n-1} ( y ) }
    & \to 0,
\end{align}
where $ \nabla_\tau $ is the tangential gradient on $ \partial \Omega $. For the last assumption, we want to comment that it is in particular satisfied if $ g_\eps \in \h^2 ( \partial \Omega ) $ such that $ \norm{ \nabla^2 g_\eps }_{ \lp^\infty ( \partial \Omega ) } $ stays bounded and 
\begin{equation}
\label{eq:ass_tang_der_weakened}
    \eps \int_{ \partial \Omega } \abs{ \nabla_\tau g_\eps ( y ) }^2 \dd{ \hm^{n-1 } ( y ) } \to 0.
\end{equation}
To see this we distinguish the area where $ g_\eps $ is close to $ \{a,b\} $ and the area where $ g_\eps $ is away from $ \{a,b\} $. If $ g_\eps $ is away from $ \{a,b\} $, then $ W ( g_\eps ( y ) ) \gtrsim 1 $ and the convergence (\ref{eq:ass_tang_der_weakened}) already implies (\ref{eq:ass_tang_deriv_squared}). On the other hand if $ g_\eps (y ) $ is close to $ \{a,b\} $, then by flattening the boundary, applying the fundamental theorem of calculus and using $ g_\eps \in [a,b]$, we obtain that the integrand  in (\ref{eq:ass_tang_deriv_squared}) stays bounded.

Recall the definition (\ref{e:geodesic distance}) of the geodesic distance.
As a consequence of the growth at the wells, we make the following observations for the asymptotic behaviour of $ W $, $ W^{-1} $ and $ \dw $ close to the wells.
\begin{lemma}\label{p:prop W}
There exists $\sigma\in (0,1)$ and $ \delta > 0 $ such that 
\begin{alignat}{3}
\label{e:subquadratic in a}
    \sigma|s-a|^{1+q} &\leq W(s)\leq \sigma^{-1}|s-a|^{1+q} \quad &&\text{for every $s\in [a,a+\delta]$},
    \\
\label{e:subquadratic in b}
    \sigma|s-b|^{1+q}&\leq W(s)\leq \sigma^{-1}|s-b|^{1+q} \quad &&\text{for every $s\in [b-\delta,b]$},
    \\
    \notag
    \sigma \abs{ s - a }^{(3+q)/2}
    & \leq
    \dw ( a , s ) 
    \leq
    \sigma^{-1}
    \abs{s - a }^{(3+q)/2} \quad &&\text{for every } s \in [a, a + \delta ]
    \text{ and}
    \\
    \notag
    \sigma \abs{ s - b }^{(3+q)/2}
    & \leq
    \dw ( s , b ) 
    \leq
    \sigma^{-1}
    \abs{s - b }^{(3+q)/2} \quad &&\text{for every } s \in [b- \delta, b] 
    .
\end{alignat}
Furthermore there exists $ \rho > 0 $ such that $ W^{-1} \colon [0, \rho ] \to [ b- \delta, b ]$ is well-defined, decreasing and satisfies
\begin{equation*}
    \sigma t^{1/(1+q)}
    \leq
    b - W^{-1} ( t ) 
    \leq
    \sigma^{-1} t^{1/(1+q)}
    \quad \text{for all } t \in  [0, \rho ].
\end{equation*}
Finally $1/\sqrt{W} \in \lp^1_{\textup{loc}}(\R)$ and $W'\in C^{0,q}_{\textup{loc}}(\R)$.
\end{lemma}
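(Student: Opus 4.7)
The proof reduces to a local analysis near each well; by symmetry it suffices to treat $a$. From (ii) we have $c_a \coloneqq \lim_{s\to a}W''(s)|s-a|^{1-q}\in(0,\infty)$, so for every small $\eta>0$ there exists $\delta>0$ with
\begin{equation*}
    (c_a-\eta)|s-a|^{q-1} \leq W''(s) \leq (c_a+\eta)|s-a|^{q-1} \quad\text{for } 0<|s-a|\leq\delta.
\end{equation*}
Since $q-1>-1$, this bound is integrable near $a$, so $W'$ extends continuously to $a$ with one-sided limits $W'(a^\pm)$. Because $a$ is a minimum of the non-negative $W$, together with the implicit $C^1$-regularity of the smooth double-well potential at the wells, we conclude $W'(a^\pm)=0$. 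Integrating the bound on $W''$ from $a$ to $s$ yields two-sided estimates on $|W'(s)|$ of order $|s-a|^q$, and a further integration using $W(a)=0$ gives \eqref{e:subquadratic in a} after absorbing constants into $\sigma$. Statement \eqref{e:subquadratic in b} follows by the identical argument at $b$.

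All remaining claims are consequences of the bounds on $W$. Substituting into the definition \eqref{e:geodesic distance} and integrating $|\rho-a|^{(q+1)/2}$ yields the $|s-a|^{(q+3)/2}$-estimate on $\dw(a,s)$, and analogously near $b$. Near $b$, one further checks from the bounds on $W'$ that $W'(s)<0$ on $(b-\delta,b)$, so $W$ is strictly decreasing there; setting $\rho\coloneqq W(b-\delta)$, the map $W^{-1}\colon[0,\rho]\to[b-\delta,b]$ is well-defined, and inverting $\sigma(b-s)^{1+q}\leq W(s)\leq\sigma^{-1}(b-s)^{1+q}$ gives the $t^{1/(1+q)}$-bound on $b-W^{-1}(t)$ (with a relabeled $\sigma$). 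Local integrability of $1/\sqrt{W}$ follows from $1/\sqrt{W(s)}\lesssim|s-a|^{-(1+q)/2}$ near $a$ (similarly near $b$), which is in $\lp^1$ because $(1+q)/2<1$ by $q<1$, while $1/\sqrt{W}$ is continuous on compact subsets of $\R\setminus\{a,b\}$. For $W'\in C^{0,q}_{\textup{loc}}(\R)$, the upper bound on $W''$ gives, for $a\leq y<x\leq a+\delta$,
\begin{equation*}
    |W'(x)-W'(y)| \leq \int_y^x W''(t)\,\dd{t} \leq \tfrac{c_a+\eta}{q}\bigl((x-a)^q-(y-a)^q\bigr) \leq \tfrac{c_a+\eta}{q}(x-y)^q,
\end{equation*}
using the subadditivity $(u+v)^q\leq u^q+v^q$; combined with the continuity of $W'$ at the wells and its smoothness on $\R\setminus\{a,b\}$, this yields the desired local Hölder regularity.

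The main obstacle is justifying $W'(a^\pm)=0=W'(b^\pm)$, on which both integrations rest. This vanishing is strictly necessary: a perturbation such as $|s-a|^{q+1}+|s-a|$ still satisfies (ii) but violates \eqref{e:subquadratic in a} because of the linear correction. One effectively needs the implicit $C^1$-regularity at the wells — standard for a smooth double-well potential — to rule out such linear corrections and make the argument rigorous.
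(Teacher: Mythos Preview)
Your argument is essentially the paper's: where you integrate the two-sided bound on $W''$ twice, the paper invokes L'H\^opital's rule (applied twice to $W(s)/|s-a|^{1+q}$), and the H\"older estimate for $W'$ via subadditivity of $t\mapsto t^q$ is verbatim the paper's computation. The remaining claims are, in both proofs, direct consequences of \eqref{e:subquadratic in a}--\eqref{e:subquadratic in b}.

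Your explicit concern about $W'(a^\pm)=0$ is well taken and in fact sharper than the paper. The second application of L'H\^opital in the paper's proof requires $W'(s)\to 0$ as $s\to a$ just as your second integration does, and your example $|s-a|^{1+q}+|s-a|$ shows that the hypotheses (i)--(ii) alone do not force this; so the vanishing of $W'$ at the wells is genuinely an implicit extra assumption on the potential (and is also needed for the conclusion $W'\in C^{0,q}_{\textup{loc}}(\R)$ to hold at all). This is a gap in the lemma's stated hypotheses rather than in your proof.
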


\begin{proof}
From L'Hôpital's rule, the continuity of $W$, $W^{-1}(\{0\})=\{a,b\}$ and (\ref{e:limite subquadratico}) we can deduce \eqref{e:subquadratic in a} and \eqref{e:subquadratic in b}. In particular $1/\sqrt{W}\in \lp^1_{\textup{loc}}(\R)$. Due to \eqref{e:limite subquadratico}, there exists $\mu\in (0,\infty)$ and $\eps\in (0,\infty)$ such that 
\begin{equation*}
    0\leq W''(b-t)\leq \frac{\mu}{|t|^{1-q}}
\end{equation*}
for every $ t\in (-\eps,\eps)\setminus \{0\}$. Therefore, for every $s_1,s_2\in (b-\eps,b)$ with $s_1<s_2$, thanks to the subadditivity of $x\mapsto x^q$, we have that 
\begin{equation*}
    |W'(s_2)-W'(s_1)|\leq \int_{s_1}^{s_2}W''(s)\dd s\leq \mu\int_{t_2}^{t_1}\frac{1}{t^{1-q}} \dt \leq \mu(t_1^{q}-t^q_2)\leq \mu(t_1-t_2)^q=\mu(s_2-s_1)^q
\end{equation*}
where $t_1=b-s_1$ and $t_2=b-s_2$. In particular, arguing in the same way in the right and left neighbourhood of $a$ and $b$, by $W'\in C^1(\R\setminus \{a,b\})$ we can conclude that $W'\in C^{0,q}_{\textup{loc}}(\R)$. 
The other claims are direct computations.
\end{proof}
Lastly we recall from the theory of ordinary differential equations the following Proposition regarding existence of solutions to the Cauchy problem.
\begin{proposition}
\label{prop:cauchy_problem}
For every $\alpha,x \in [a,b]$ we define 
\begin{equation}
    \label{eq:def_Psialpha}
    \Psi_\alpha ( x )
    \coloneqq
    \int_\alpha^x
    \frac{1}{\sqrt{W(s)}}
    \dd{ s }.
\end{equation}
Let $T^{(\alpha)} \coloneqq \Psi_\alpha ( b ) $ and $ z_\alpha \colon [0,\infty) \to [a,b] $ be defined by
\begin{equation*}
    z_\alpha ( t )
    \coloneqq
    \begin{cases}
        \Psi_\alpha^{-1} ( t ), & \text{if } t \in [0, T^{(\alpha ) }],
        \\
        b, &\text{if } t \in (T^{(\alpha)}, \infty ).
    \end{cases}
\end{equation*}
Then $z_{\alpha} \in C^1 ( [0, \infty ) )$ solves
\begin{equation}
\label{eq:cauchy_problem_full}
\begin{cases}
      z'_{\alpha}(t)=\sqrt{W(z_{\alpha}(t))}, & \textup{for every }t\in \R, \\
      z_{\alpha}(0)=\alpha,
      \\
      z_{\alpha }(t) \in [a, b ], & \textup{for every } t \in \R .
\end{cases}
\end{equation}
\end{proposition}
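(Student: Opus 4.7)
The plan is to reduce everything to a careful study of the auxiliary function $\Psi_\alpha$, then recover $z_\alpha$ as its continuous inverse on a suitable interval and extend by the constant $b$ afterwards. First I would verify that $\Psi_\alpha\colon[\alpha,b]\to[0,T^{(\alpha)}]$ is a well-defined continuous bijection with $T^{(\alpha)}<\infty$: this is immediate from $1/\sqrt{W}\in\lp^1_{\mathrm{loc}}(\R)$ (Lemma \ref{p:prop W}) together with the strict positivity of $1/\sqrt{W}$ on $(a,b)$, which also yields strict monotonicity. In particular $\Psi_\alpha^{-1}$ is continuous and strictly increasing, so $z_\alpha$ is well-defined and continuous on $[0,\infty)$ with $z_\alpha(0)=\alpha$ and values in $[a,b]$.

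Next I would establish the ODE on the open set where $W\neq 0$. On $(\alpha,b)\setminus\{a,b\}$ the map $\Psi_\alpha$ is $C^1$ with non-vanishing derivative $1/\sqrt{W}$, so the classical inverse function theorem gives
\begin{equation*}
    (\Psi_\alpha^{-1})'(t)=\sqrt{W(\Psi_\alpha^{-1}(t))}=\sqrt{W(z_\alpha(t))}
\end{equation*}
on the corresponding open subset of $(0,T^{(\alpha)})$, with at most two excluded points, namely $t=0$ when $\alpha=a$ and $t=T^{(\alpha)}$. The right-hand side is continuous in $t$ since $W\in C(\R)$ and $z_\alpha\in C([0,\infty))$, so the identity extends by continuity to all of $[0,T^{(\alpha)}]$. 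The same value $\sqrt{W(b)}=0$ is the derivative of the constant extension $z_\alpha\equiv b$ on $[T^{(\alpha)},\infty)$, which matches the left derivative at $T^{(\alpha)}$ and thereby gives $z_\alpha\in C^1([0,\infty))$ at this junction.

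The remaining point concerns the regularity at $t=0$ when $\alpha=a$: here $\Psi_a$ has an improper integral at $a$, but the subquadratic bound \eqref{e:subquadratic in a} yields
$1/\sqrt{W(s)}\lesssim |s-a|^{-(1+q)/2}$ with $(1+q)/2<1$, so $\Psi_a$ is still finite, continuous and strictly increasing on $[a,b]$. Since $z_a(t)\to a$ as $t\to 0^+$, we get $\sqrt{W(z_a(t))}\to 0=\sqrt{W(a)}$; a one-sided application of l'Hôpital or of the mean value theorem to $\Psi_a^{-1}$ then shows the right derivative of $z_a$ at $0$ exists and equals $0$, so again the ODE and $C^1$-regularity survive.

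The main obstacle I anticipate is exactly this $C^1$-regularity at the two junctions where $W$ vanishes and the classical inverse function theorem does not apply directly; the subquadratic growth encoded in Lemma \ref{p:prop W} is what allows the naive identity $z_\alpha'=\sqrt{W(z_\alpha)}$ to be promoted to a continuous statement up to and including those points. Everything else reduces to bookkeeping of monotone continuous inverses and of a trivial constant extension.
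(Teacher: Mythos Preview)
Your proposal is correct. The paper itself does not give a proof of this proposition; it simply introduces it with ``Lastly we recall from the theory of ordinary differential equations the following Proposition'' and states it without argument, treating it as a standard fact. Your sketch supplies exactly the natural verification: invert $\Psi_\alpha$ on $(\alpha,b)$ where the inverse function theorem applies, then use continuity of $\sqrt{W\circ z_\alpha}$ together with the mean-value-theorem lemma (if $f$ is continuous on $[c,d]$, differentiable on $(c,d)$, and $f'\to L$ at an endpoint, then $f$ is differentiable there with derivative $L$) to handle the endpoints $t=0$ and $t=T^{(\alpha)}$, and finally glue with the constant extension. The only cosmetic point is that your phrase ``on $(\alpha,b)\setminus\{a,b\}$'' is redundant since $(\alpha,b)\subset(a,b)$ already, and the ``two excluded points'' you mention are the endpoints of the open interval rather than interior exceptions; but this does not affect the argument.
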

Note that the solution to equation (\ref{eq:cauchy_problem_full}) is unique if and only if $ \alpha > a $ due to the last condition. For $ \alpha = a $, we choose the unique solution which immediately leaves the well $ a $.

\section{1-dimensional weighted case}
\label{sct:1_dim}
As we will later see, the functional applied to functions on a $d$-dimensional domain will be reduced to a functional acting on functions with one-dimensional domains, at the cost of introducing a weight $ \omega $, which comes from the Jacobian $ \omega \coloneqq \det J_\Phi $ of the transformation map $ \Phi $, see \Cref{p:The Phi}.
Thus for an interval $ I = (0,T)$, we are interested in the behaviour of the functional
\begin{align*}
    G_\eps^{(0)} \colon \lp^1(I) &\to [0, \infty ]
    \\
    v & \mapsto
    \begin{cases}
        \int_0^T (W ( v (t ) ) + \eps^2 (v'(t))^2 ) \omega ( t )
        \dd{ t },
        &\text{if } v \in \h^1 ( I ) \text{ with } v(0)= \alpha_\eps \text{ and } v(T)= \beta_\eps,
        \\
        \infty,
        & \text{otherwise}.
    \end{cases}
\end{align*}
Here $ \omega \in C^1 ( [0,T]) $.
In the application later, $ \alpha_\eps =g_\eps ( y ) $ for a fixed point on the boundary $ y \in \partial \Omega $, and $ \beta_\eps $ will be exponentially close to $ b $. Thus it follows from assumption (\ref{eq:assumption_bdry_data_distinction}) and $ w'(0) = - \kappa ( y ) $ (see \Cref{p:The Phi}) that we always assume that there exists $ a < \alpha_- < b $ and $ \kappa_0 < 0 $ such that $ \beta_\eps \in [ \alpha_-, b ] $ and 
\begin{equation}  
\label{eq:bdry_assumptions_1d}
\alpha_\eps\in [\alpha_-,b ] 
    \quad \text{or}\quad \omega' ( 0 ) \geq -\kappa_0.
\end{equation}
Moreover for some $ \alpha \in [a,b] $ we have convergence of the boundary values
\begin{equation*}
    \alpha_\eps \to \alpha  \quad \text{and} \quad
    \beta_\eps \to b \quad \text{as } \eps \to 0.
\end{equation*}
Since $ [0,T] $ is a slice close to the boundary, the interval length $ T $ will be very short and $ \omega $ thus almost constant. We thus may assume $ T \leq 1 $ and that in the case $ \omega'(0) \geq - \kappa_0 $, the interval length $ T $ is so small that $ \omega' ( t ) \geq - \kappa_0/2 $ for all $ t \in [0,T] $ so that by the fundamental theorem of calculus, we have
\begin{equation}
    \label{eq:weight_lower_bound}
    \omega ( t ) \geq \omega ( 0 ) - \frac{\kappa_0}{2} t 
    \quad
    \text{for all } t \in [0,T] \text{ if } \omega'(0) > - \kappa_0.
\end{equation}
Furthermore defining $ \omega_0 \coloneqq \min_{[0,T]} \omega $ and $ \omega_1 \coloneqq \max_{[0,T] }\omega $, we note that $ \omega_1 - \omega_0 $ will be small with $ \omega (0)=1$. 
All constants $ C $ (which may change from line to line) in this section only depend on $ W $, $ \alpha_{-} $, $ \norm{ \omega }_{ C^1 [0, T] } $ and $ \kappa_0 $. If we say that a statement holds for all $ \eps > 0 $ sufficiently small, we mean that there exists $ \eps_0 > 0 $ possibly depending on $ W $, $ \alpha_- $, $ \norm{ \omega }_{ C^1 [0,T]} $, $ \kappa_0 $ and the convergence speed of $ \beta \to b $ such that the statement holds for all $ 0 < \eps < \eps_0$.
\begin{proposition}
Let $G^{(0)}:\lp^1(I)\to [0,\infty]$ be the functional given by
\begin{equation*}
    G^{(0)}(v)=\int_0^T W(v(t))\omega(t) \dd{t}.
\end{equation*}
Then we have that 
\begin{equation*}
    \Gamma \textup{-}\lim_{\eps \to 0} G_{\eps}^{(0)}=G^{(0)}.
\end{equation*}
\end{proposition}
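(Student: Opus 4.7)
The plan is to verify the two standard $\Gamma$-convergence inequalities. The proof is comparatively routine for this functional because the only term depending singularly on $\eps$, namely $\eps^2(v')^2\omega$, is nonnegative and disappears in the limit, while the potential term $W(v)\omega$ is $\lp^1$-continuous along bounded sequences since $W$ is continuous and $\omega\in C^1([0,T])$ is bounded.

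For the $\Gamma$-$\liminf$ inequality, fix any $v_\eps \to v$ in $\lp^1(I)$; we may assume $\liminf_\eps G_\eps^{(0)}(v_\eps)$ is finite (otherwise the inequality is trivial) and is realised as a limit along a subsequence, and that along a further subsequence $v_\eps\to v$ pointwise almost everywhere. Discarding the nonnegative term $\eps^2(v_\eps')^2\omega$ and applying Fatou's lemma to the nonnegative integrands $W(v_\eps)\omega$ then gives
\begin{equation*}
G^{(0)}(v)
= \int_0^T W(v)\omega\dd{t}
\leq \liminf_{\eps\to 0} \int_0^T W(v_\eps)\omega\dd{t}
\leq \liminf_{\eps\to 0} G_\eps^{(0)}(v_\eps).
\end{equation*}

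For the $\Gamma$-$\limsup$ inequality, fix $v\in\lp^1(I)$; the case $G^{(0)}(v)=\infty$ is trivial, so assume $G^{(0)}(v)<\infty$ and that $v$ is $[a,b]$-valued (replacing $v$ by its truncation does not increase $G^{(0)}$). First I would reduce to smooth $v$ by a diagonal argument: mollification produces $v_k\in C^\infty([0,T],[a,b])$ with $v_k\to v$ in $\lp^1$, and since $W(v_k)\omega$ is bounded above by $\norm{W}_{\lp^\infty[a,b]}\norm{\omega}_{\lp^\infty[0,T]}$, dominated convergence yields $G^{(0)}(v_k)\to G^{(0)}(v)$. For smooth $v$, I construct the recovery sequence by linear interpolation on two boundary layers of width $\eps$,
\begin{equation*}
v_\eps(t)\coloneqq
\begin{cases}
\alpha_\eps+\tfrac{t}{\eps}\bigl(v(\eps)-\alpha_\eps\bigr), & t\in[0,\eps],\\
v(t), & t\in[\eps,T-\eps],\\
v(T-\eps)+\tfrac{t-(T-\eps)}{\eps}\bigl(\beta_\eps-v(T-\eps)\bigr), & t\in[T-\eps,T].
\end{cases}
\end{equation*}
This lies in $\h^1(I)$, satisfies the Dirichlet conditions $v_\eps(0)=\alpha_\eps$ and $v_\eps(T)=\beta_\eps$, and differs from $v$ only on a set of measure $2\eps$ on which both functions stay bounded in $[a,b]$, so $v_\eps\to v$ in $\lp^1(I)$. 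On the two boundary strips one has $\abs{v_\eps'}\leq C/\eps$ and $W(v_\eps)\omega\leq C$, contributing $O(\eps)$ to the total energy; on the interior, the potential term converges to $G^{(0)}(v)$ by integrability while the gradient term is bounded by $\eps^2\norm{v'}_{\lp^\infty}^2\norm{\omega}_{\lp^\infty}T=O(\eps^2)$. Taking $\limsup$ and then passing along the diagonal concludes the argument.

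There is no real obstacle here beyond the bookkeeping for the two boundary layers; in particular, no optimal transition profile or cell-problem analysis is required, in sharp contrast to the finer rescalings carried out in the subsequent sections.
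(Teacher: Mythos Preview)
Your proof is correct and follows essentially the same route as the paper: Fatou's lemma for the $\liminf$ inequality after extracting a pointwise-a.e.\ convergent subsequence, and for the $\limsup$ a smooth approximation corrected on boundary layers of width $\eps$ so that the gradient term contributes only $O(\eps)$. The paper merely refers to its multidimensional analogue (\Cref{prop:zero_order_gamma_limit}), whose recovery sequence uses a cutoff interpolation between an extension of the boundary data and an interior approximation rather than your direct linear interpolation, but in one dimension the two constructions are equivalent.
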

\begin{proof}
    The proof is identical to the unweighted case, see \Cref{prop:zero_order_gamma_limit}.
\end{proof}

\begin{lemma}
\label{prop:minimizer_properties}
  For all $ \eps > 0 $, the functional $G_\eps^{(0)} $ assumes its infimum.
  Moreover for every minimizer $ v $, we have $v\in C^2([0,T])$ with $v(0)=\alpha_{\eps}$, $v(T)=\beta_{\eps}$, $a\leq v \leq b$ and 
  \begin{equation}\label{e:E-L eq}
      v''(t)=\frac{W'(v(t))}{2\eps^2}-\frac{\omega'(t)}{\omega(t)}v'(t)
  \end{equation}
  for every $t\in [0,T]$. Lastly we have the energy bound
  \begin{equation}
  \label{eq:energy_bound}
      \min G_\eps^{(0)}
      \leq
      \eps \omega_1 \left( \dw ( \alpha_\eps , b ) + \dw ( \beta_\eps , b ) \right)
  \end{equation}
\end{lemma}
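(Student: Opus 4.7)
The plan is to prove the three assertions in order, using standard variational techniques adapted to the subquadratic setting.

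First I would establish existence by the direct method. For a minimizing sequence $(v_n)$, the weight satisfies $\omega \geq \omega_0 := \min_{[0,T]} \omega > 0$ (continuous, positive on a compact interval; in the case $\omega'(0) \geq -\kappa_0$ this follows from (\ref{eq:weight_lower_bound})), so $\eps^2 \omega_0 \int_0^T (v_n')^2 \dt \leq G_\eps^{(0)}(v_n)$. Combined with $v_n(0) = \alpha_\eps$, this yields a uniform $\h^1(I)$-bound, hence a subsequence with $v_n \rightharpoonup v$ in $\h^1$ and $v_n \to v$ uniformly. Traces are preserved, Fatou gives lower semicontinuity of $v \mapsto \int W(v) \omega \dt$, and the gradient term is weakly lower semicontinuous because the integrand is convex in $v'$ with a positive weight.

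For the structural properties of minimizers, I would first truncate: replacing $v$ by $(v \wedge b) \vee a$ preserves the boundary values, does not increase $\int W(v) \omega \dt$ (since $W$ vanishes exactly on $\{a,b\}$), and does not increase the gradient term. The first variation against $\varphi \in C_c^\infty(0,T)$ yields
\[
\int_0^T \bigl( \omega W'(v)\varphi + 2\eps^2 \omega v' \varphi' \bigr) \dt = 0,
\]
which, after the usual rearrangement, is the weak form of (\ref{e:E-L eq}). Since $v \in \h^1 \hookrightarrow C^0([0,T])$ with $v([0,T]) \subseteq [a,b]$ and $W' \in C^{0,q}_{\textup{loc}}(\R)$ by \Cref{p:prop W}, the right-hand side of (\ref{e:E-L eq}) is continuous in $t$, so a bootstrap argument on the ODE produces $v \in C^2([0,T])$.

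For the energy bound I would construct an explicit competitor using the Cauchy--problem solutions of \Cref{prop:cauchy_problem}. Because $W$ is subquadratic, both $T^{(\alpha_\eps)}$ and $T^{(\beta_\eps)}$ are finite, so for $\eps$ small enough $\eps T^{(\alpha_\eps)} + \eps T^{(\beta_\eps)} \leq T$. Setting $\tau_\alpha := \eps T^{(\alpha_\eps)}$, $\tau_\beta := \eps T^{(\beta_\eps)}$, define the three-piece competitor
\[
v(t) := \begin{cases}
z_{\alpha_\eps}(t/\eps), & t \in [0,\tau_\alpha], \\
b, & t \in [\tau_\alpha, T - \tau_\beta], \\
z_{\beta_\eps}((T-t)/\eps), & t \in [T - \tau_\beta, T].
\end{cases}
\]
This $v$ lies in $\h^1(I)$ with the correct boundary values; on the non-constant pieces $\eps |v'| = \sqrt{W(v)}$, so the integrand equals $2W(v)$. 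Bounding $\omega \leq \omega_1$ and changing variables $\rho = z_{\alpha_\eps}(t/\eps)$, the first piece contributes at most $2\omega_1 \eps \int_{\alpha_\eps}^b \sqrt{W(\rho)}\dd{\rho} = \omega_1 \eps\, \dw(\alpha_\eps, b)$; the third piece gives the analogous bound with $\beta_\eps$; and the middle piece contributes $0$ since $W(b)=0$. Summing yields (\ref{eq:energy_bound}).

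I do not expect serious obstacles here. The only delicate point is the $C^2$-regularity in Step 2, since $W''$ blows up at the wells; this is circumvented by noting that the equation (\ref{e:E-L eq}) only requires continuity (in fact Hölder-regularity) of $W'$ composed with the continuous function $v \in [a,b]$, which is guaranteed by \Cref{p:prop W}.
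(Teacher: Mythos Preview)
Your proposal is correct and follows essentially the same approach as the paper: direct method for existence, truncation plus outer variations and a bootstrap via the Hölder continuity of $W'$ for regularity and (\ref{e:E-L eq}), and the same three-piece competitor built from $z_{\alpha_\eps}$ and $z_{\beta_\eps}$ for the energy bound. The paper is slightly more explicit in the regularity step (it first shows $v'\omega \in \h^1$ from the weak Euler--Lagrange equation, then upgrades to $C^1$ via the fundamental theorem), but this is exactly your bootstrap spelled out.
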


\begin{proof}
The existence of minimizers can be shown with the direct method. Indeed if $(v_k)$ is a minimizing sequence of $G_{\eps}^{(0)}$, we can assume that for every $k\in \N$ 
\begin{equation*}
    \int_0^T W(v_k(t))+\eps^2(v_k'(t))^2\dt \leq M
\end{equation*}
for some constant $M>0$, thus the sequence $(v_k)$ is bounded in $\h^1(I)$. By truncation we may assume that $ v_k $ maps to $ [a,b]$. 
Therefore there exists $v\in \h^1(I)$ with $v(0)=\alpha_{\eps}$ and $v(T)=\beta_{\eps}$ such that $v_k\to v$ uniformly on $[0,T]$ and $v'_k\weakto v'$ in $\lp^2(I)$, up to a subsequence. Thus, by the superadditivy of the limit inferior and the convexity of the function $x\mapsto x^2$, we conclude that $v\in \argmin G_{\eps}^{(0)}$.

Fix now $v\in \argmin G_{\eps}^{(0)}$. Thanks to a truncation argument we have that $a\leq v\leq b$. Through outer variations, we obtain that
\begin{equation}\label{e:E-L primo ordine}
    \int_0^T (W'(v)\phi+ 2\eps^2v'\phi')\omega \dd{t}=0,
\end{equation}
for every $\phi\in C^{\infty}_c(I)$, therefore the distributional derivative of $2\eps^2v'\omega$ is $W'(v)\omega$. Thus $v'\omega\in \h^1(I)$ and using the fundamental theorem, we deduce $v'\omega\in C^1([0,T])$. To finish we observe that since $ 1/\omega \in C^1([0,T]) $, we obtain $v\in C^2([0,T])$ and that equation \eqref{e:E-L eq} holds.

We now prove the energy bound (\ref{eq:energy_bound}). 
Fix $ \eps > 0 $ and recalling \Cref{prop:cauchy_problem}, define $ T_{\eps } \coloneqq \eps \Psi_{\alpha_\eps } ( b ) $ and $ S_\eps \coloneqq \eps \Psi_{\beta_\eps } ( b  ) $.We note by the local integrability of $ W^{-1/2}$ (see \Cref{p:prop W}) that $S_\eps, T_\eps \leq C \eps $
for some $C>0$.
In particular there exists $\eps_1\in (0,\infty)$ such that $T_{\eps}<T-S_{\eps}$ for every $\eps\in (0,\eps_1)$. 
Fix $\eps\in (0,\eps_1)$ and define the function $v_{\eps}\colon[0,T]\to [a,b]$ as
\begin{equation}
\label{eq:def_recov_seq}
v_{\eps}(t)\coloneqq
\begin{cases}
z_{\alpha_\eps}(t/\eps), & \textup{if $t\in [0,T_{\eps}]$}, \\
b, & \textup{if $t\in (T_{\eps},T-S_{\eps})$}, \\
z_{\beta_\eps}((T-t)/\eps) , & \textup{if $t\in [T-S_{\eps},T]$}, 
\end{cases}
\end{equation}
see also \Cref{fig:plot_recovery}.
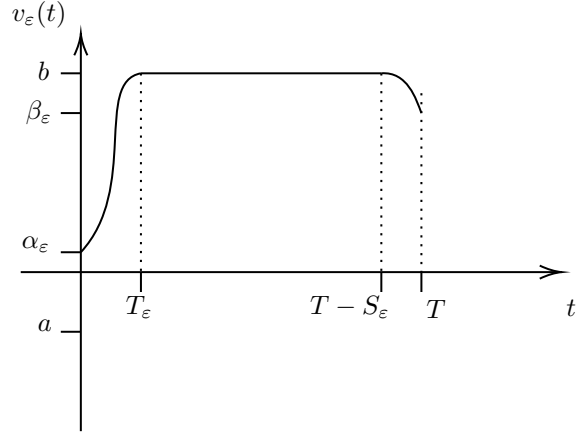
\begin{figure}
    \centering

\tikzset{every picture/.style={line width=0.75pt}} 

\begin{tikzpicture}[x=0.75pt,y=0.75pt,yscale=-1,xscale=1]

\draw    (190,200) -- (458,200) ;
\draw [shift={(460,200)}, rotate = 180] [color={rgb, 255:red, 0; green, 0; blue, 0 }  ][line width=0.75]    (10.93,-3.29) .. controls (6.95,-1.4) and (3.31,-0.3) .. (0,0) .. controls (3.31,0.3) and (6.95,1.4) .. (10.93,3.29)   ;
\draw    (220,280) -- (220,82) ;
\draw [shift={(220,80)}, rotate = 90] [color={rgb, 255:red, 0; green, 0; blue, 0 }  ][line width=0.75]    (10.93,-3.29) .. controls (6.95,-1.4) and (3.31,-0.3) .. (0,0) .. controls (3.31,0.3) and (6.95,1.4) .. (10.93,3.29)   ;
\draw    (220,230) -- (210,230) ;
\draw    (220,100) -- (210,100) ;
\draw    (210,190) -- (220,190) ;
\draw    (220,120) -- (210,120) ;
\draw    (220,190) .. controls (249,158.33) and (226.33,104.33) .. (250,100) ;
\draw    (250,200) -- (250,210) ;
\draw    (250,100) -- (370,100) ;
\draw    (370,200) -- (370,210) ;
\draw    (390,200) -- (390,210) ;
\draw    (370,100) .. controls (383,99) and (387.4,113) .. (390,120) ;
\draw  [dash pattern={on 0.84pt off 2.51pt}]  (370,100) -- (370,200) ;
\draw  [dash pattern={on 0.84pt off 2.51pt}]  (250,100) -- (250,200) ;
\draw  [dash pattern={on 0.84pt off 2.51pt}]  (390,110) -- (390,200) ;

\draw (197,222.4) node [anchor=north west][inner sep=0.75pt]    {$a$};
\draw (197,92.4) node [anchor=north west][inner sep=0.75pt]    {$b$};
\draw (189,180.4) node [anchor=north west][inner sep=0.75pt]    {$\alpha _{\varepsilon }$};
\draw (191,112.4) node [anchor=north west][inner sep=0.75pt]    {$\beta _{\varepsilon }$};
\draw (241,210.4) node [anchor=north west][inner sep=0.75pt]    {$T_{\varepsilon }$};
\draw (333,210.4) node [anchor=north west][inner sep=0.75pt]    {$T-S_{\varepsilon }$};
\draw (391,212.4) node [anchor=north west][inner sep=0.75pt]    {$T$};
\draw (461,212.4) node [anchor=north west][inner sep=0.75pt]    {$t$};
\draw (184,62.4) node [anchor=north west][inner sep=0.75pt]    {$v_{\varepsilon }( t)$};

\end{tikzpicture}

    \caption{Graph of the recovery sequence $v_\eps$ defined in equation (\ref{eq:def_recov_seq}).}
    \label{fig:plot_recovery}
\end{figure}
It follows that $v_{\eps}\in \h^1([0,T])$ with $v_{\eps}(0)=\alpha_{\eps}$, $v_{\eps}(T)=\beta_{\eps}$ and 
\begin{equation}
\label{eq:rec_seq_der}
    v'_{\eps}(t)=
   \begin{cases}
        \frac{\sqrt{W(v_{\eps}(t))}}{\eps}, & \textup{if $t\in [0,T-S_{\eps})$}, \\
         - \frac{\sqrt{W(v_{\eps}(t))}}{\eps} , & \textup{if $t\in [T-S_{\eps},T]$}. \\
   \end{cases} 
\end{equation}
By using equation (\ref{eq:rec_seq_der}) and substitution, we thus obtain the inequality
\begin{align*}
    G_\eps^{(0)} ( v_\eps ) 
    & \leq
    \eps \omega_1
    \left(
    \int_0^{T_\eps}
    \sqrt{W ( v_\eps ) } \abs{ v_\eps ' }
    \dd{ t }
    +
    \int_{T - S_\eps }^{T}
    \sqrt{W ( v_\eps ) } \abs{ v_\eps ' }
    \dd{ t }
    \right)
    \\
    & =
    \eps \omega_1 \left( 
    \dw( \alpha_\eps , b ) + \dw ( \beta_\eps , b )
    \right),
    \end{align*}
    which completes the proof.
\end{proof}
Recalling the definition of the higher-order expansion (see \Cref{sct:gamma_limit}) and noting that $ \min G^{(0)} = 0$, we define $G^{(1)}_{\eps}:\lp^1(I)\to [0,\infty]$ as 
\begin{equation*}
G^{(1)}_{\eps}(v)\coloneqq
\begin{cases}
    \int_I \left(\frac{W(v(t))}{\eps}+\eps(v'(t))^2\right)\omega(t) \dd{t} , & \textup{if $v\in \h^1(I)$, $v(0)=\alpha_{\eps}$ and $v(T)=\beta_{\eps}$}, \\
    \infty, & \textup{otherwise}.
\end{cases}
\end{equation*}

\begin{proposition}\label{p:first order gamma limit}
Let $G^{(1)}:\lp^1(I)\to [0,\infty]$ be the functional given by 
\begin{equation*}
G^{(1)}(v)=
\begin{cases}
    \frac{C_W}{b-a}\int_I \omega \dd{ \abs{ \mathrm{D} v } }+\dw(v(0),\alpha)\omega(0)+\dw(v(T),b)\omega(T), & \textup{if $v\in \bv(I;\{a,b\})$}, \\
    \infty, & \textup{otherwise}.
\end{cases}
\end{equation*}
Then
\begin{equation*}
    \Gamma \textup{-}\lim_{\eps \to 0} G^{(1)}_{\eps}=G^{(1)}.
\end{equation*}
\end{proposition}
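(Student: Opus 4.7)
The plan is to establish the two standard $\Gamma$-convergence inequalities: the \emph{liminf} follows from the Modica--Mortola AM--GM trick combined with an extension argument that folds the Dirichlet boundary mismatch into jumps of an auxiliary $\bv$-function, while the \emph{limsup} is proved by an explicit recovery sequence built from the optimal transition profiles $z_\alpha$ provided by \Cref{prop:cauchy_problem}.

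For the liminf, fix $v_\eps \to v$ in $\lp^1(I)$ with $\liminf G^{(1)}_\eps(v_\eps) < \infty$. Since $\eps\, G^{(1)}_\eps(v_\eps)\to 0$, the zeroth-order liminf forces $W(v)=0$ a.e., hence $v\in\{a,b\}$ a.e. Introducing the primitive $\Phi(s)\coloneqq \int_a^s 2\sqrt{W(\rho)}\,d\rho$, so that $\Phi(b)=C_W$, the AM--GM inequality gives
\begin{equation*}
    G^{(1)}_\eps(v_\eps)\;\ge\;\int_0^T 2\sqrt{W(v_\eps)}\,|v_\eps'|\,\omega\,\dt \;=\;\int_0^T \omega\,\dd{|\mathrm{D}(\Phi\circ v_\eps)|}.
\end{equation*}
I then extend everything to the larger interval $(-1,T+1)$, setting $\hat v_\eps \equiv \alpha_\eps$ on $(-1,0)$ and $\hat v_\eps \equiv \beta_\eps$ on $(T,T+1)$, and extending $\omega$ by the constants $\omega(0)$ and $\omega(T)$. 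Because $v_\eps\in\h^1(I)$ has traces matching the Dirichlet data, $\hat v_\eps$ is continuous at $0$ and $T$, so the weighted total variation on the extended interval equals the one on $(0,T)$. On the other hand, $\hat v_\eps\to \hat v$ in $\lp^1(-1,T+1)$, where $\hat v$ jumps from $\alpha$ to $v(0)$ at $0$ and from $v(T)$ to $b$ at $T$. By the standard lower semicontinuity of $w\mapsto \int \omega\,d|\mathrm{D}w|$ for positive continuous weights (via duality with $\phi\in C_c^1$, $|\phi|\le\omega$), I obtain
\begin{equation*}
    \liminf_\eps G^{(1)}_\eps(v_\eps)
    \;\ge\;\int_{-1}^{T+1}\omega\,\dd{|\mathrm{D}(\Phi\circ\hat v)|}
    \;=\;\frac{C_W}{b-a}\int_I\omega\,\dd{|\mathrm{D}v|}+\omega(0)\dw(v(0),\alpha)+\omega(T)\dw(v(T),b),
\end{equation*}
where I used that $\Phi$ is monotone with $|\Phi(r)-\Phi(s)|=\dw(r,s)$ for $r,s\in[a,b]$, and that $v\in\bv(I;\{a,b\})$ implies $|\mathrm{D}(\Phi\circ v)|=\frac{C_W}{b-a}|\mathrm{D}v|$.

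For the limsup, given $v\in\bv(I;\{a,b\})$ with finitely many interior jump points $t_1<\dots<t_k$, I build $v_\eps$ by concatenating pieces of the Cauchy-problem solutions. Near $t=0$ I insert, on an interval of length $O(\eps)$, the profile $t\mapsto z_{\alpha_\eps}(t/\eps)$ if $v(0)=b$, or its time-reversed analogue $t\mapsto \tilde z_{\alpha_\eps}(t/\eps)$ (solving $z'=-\sqrt{W(z)}$) if $v(0)=a$; analogously at $t=T$ with $\beta_\eps$. At each interior jump $t_i$ I paste the transition $z_a((t-t_i)/\eps)$ (or its reversal), exploiting crucially that subquadratic growth makes $z_a$ reach the opposite well in the \emph{finite} time $T^{(a)}<\infty$. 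Between transitions the function is the appropriate constant $a$ or $b$. On every transition interval the ODE $z'=\pm\sqrt{W(z)}$ gives $\eps(v_\eps')^2=W(v_\eps)/\eps$, so the density reduces to $2\sqrt{W(v_\eps)}|v_\eps'|\omega$, and a change of variables yields
\begin{equation*}
    \omega(t_i)\dw(a,b)+o(1)=\omega(t_i)C_W+o(1)
\end{equation*}
at each interior jump, and $\omega(0)\dw(\alpha_\eps,v(0))+o(1)$, $\omega(T)\dw(v(T),\beta_\eps)+o(1)$ at the boundaries. Continuity of $\dw$ together with $\alpha_\eps\to\alpha$, $\beta_\eps\to b$, and continuity of $\omega$, let me pass to the limit and recover exactly $G^{(1)}(v)$. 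Since the transitions occupy an interval of total length $O(\eps)$, one also has $v_\eps\to v$ in $\lp^1(I)$.

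The main obstacle is the liminf on the boundary, since $G^{(1)}_\eps$ carries no explicit boundary term yet the limit does. Extending to $(-1,T+1)$ turns the Dirichlet mismatch into interior jumps of $\hat v$, after which the continuity (and positivity) of the extended weight makes the lower-semicontinuity of weighted $\bv$-norms directly applicable; verifying that the assumption $\alpha_\eps\to\alpha$, $\beta_\eps\to b$ is exactly what is needed for the extensions to converge in $\lp^1$ is the delicate accounting step.
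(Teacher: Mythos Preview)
Your proof is correct and follows exactly the approach the paper sketches: the liminf is obtained via the Modica--Mortola trick combined with a constant extension to a larger interval so that the boundary mismatch becomes an interior jump, and the limsup is obtained by gluing optimal transition profiles at the interior jumps and at the endpoints. The only point worth making explicit is that $v\in\bv(I;\{a,b\})$ follows from the uniform bound $\int_I \omega\,\dd{|\mathrm{D}(\Phi\circ v_\eps)|}\le C$ and $\omega\ge\omega_0>0$, but this is implicit in your argument.
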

\begin{proof}
    The proof follows as in \cite{ModicaMortola77} or in \cite{RubinsteinSternberg90}. For the lower bound, the additional boundary value terms are obtained by extending constant at the boundary and considering a slightly larger domain. For the recovery sequence, we make an additional transition at the boundary.
\end{proof}
\subsection{Recovery sequence for the second-order functional}

To define the second-order functional, we first have to find $ \min G^{(1)} $. We find that as long as the interval length is sufficiently small, the unique minimizer of $ G^{(1)} $ must already be the constant function $ b $.
\begin{proposition}
    Assume that (\ref{eq:bdry_assumptions_1d}) holds. Then the constant function $ b $ is the unique minimizer of $ G^{(1)}$.
\end{proposition}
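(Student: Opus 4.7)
The plan is to parametrize each $v\in\bv(I;\{a,b\})$ by its boundary traces $v(0), v(T)\in\{a,b\}$ and its finitely many jump points $t_1<\dots<t_k$ in $(0,T)$, each contributing $b-a$ to $\abs{\diff v}$, so that
\begin{equation*}
    G^{(1)}(v)
    =
    C_W \sum_{i=1}^k \omega(t_i)
    +
    \dw(v(0),\alpha)\,\omega(0)
    +
    \dw(v(T),b)\,\omega(T),
\end{equation*}
and $G^{(1)}(b)=\dw(b,\alpha)\,\omega(0)$. Using the identity $\dw(a,\alpha)+\dw(b,\alpha)=C_W$ obtained by splitting the integral defining $\dw$ at $\alpha$, the goal is to verify $G^{(1)}(v)>G^{(1)}(b)$ for every $v\neq b$ by distinguishing the four cases indexed by $(v(0),v(T))\in\{a,b\}^2$.

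The two cases $v(0)=v(T)=b$ (with $k\geq 2$ when $v\neq b$) and $v(T)=a$ are immediate: in the first, $G^{(1)}(v)-G^{(1)}(b)=C_W\sum_i\omega(t_i)\geq 2C_W\omega_0>0$, while in the second the difference contains a positive boundary term $C_W\omega(T)$ which, combined with a bounded term in $[-C_W,C_W]\,\omega(0)$, stays positive because $\omega_0$ and $\omega(T)$ are close to $\omega(0)=1$. The delicate situation is $v(0)=a$, where
\begin{equation*}
    G^{(1)}(v)-G^{(1)}(b)
    =
    C_W\sum_{i=1}^{k}\omega(t_i)
    +
    \bigl(2\dw(a,\alpha)-C_W\bigr)\omega(0)
    +
    \dw(v(T),b)\,\omega(T),
\end{equation*}
and the middle term may be negative. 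The worst sub-case is $k=1$ with $v(T)=b$, for which the difference reduces to $C_W(\omega(t_1)-\omega(0))+2\dw(a,\alpha)\omega(0)$, and a similarly tight estimate arises for the constant $v\equiv a$, giving $C_W(\omega(T)-\omega(0))+2\dw(a,\alpha)\omega(0)$.

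I would close these by invoking the dichotomy (\ref{eq:bdry_assumptions_1d}). In the regime $\alpha_\eps\in[\alpha_-,b]$, the bound $\dw(a,\alpha)\geq\dw(a,\alpha_-)>0$ is uniform, and since $T$ was chosen so small that $\omega_1-\omega_0\ll\dw(a,\alpha_-)$, the positive term $2\dw(a,\alpha)\omega(0)$ dominates any weight fluctuation in both subcases. In the complementary regime $\omega'(0)\geq-\kappa_0$, one may have $\alpha=a$, annihilating the $\dw$ term; here I would invoke the strict growth (\ref{eq:weight_lower_bound}), which yields $\omega(t)>\omega(0)$ for every $t>0$, producing $C_W(\omega(t_1)-\omega(0))>0$ in case B and $C_W(\omega(T)-\omega(0))>0$ in case C. The main obstacle is case B with $k=1$ and $t_1\downarrow 0$ in the sub-regime $\alpha=a$: both $\dw(a,\alpha)$ and the weight increment $\omega(t_1)-\omega(0)$ vanish simultaneously, and the strict inequality survives only because for any fixed $t_1>0$ the slope bound in (\ref{eq:weight_lower_bound}) gives $\omega(t_1)-\omega(0)\geq-(\kappa_0/2)t_1>0$, so that $G^{(1)}(v)-G^{(1)}(b)$ remains strictly positive even though it tends to zero as $t_1\downarrow 0$.
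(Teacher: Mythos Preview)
Your argument is correct and follows essentially the same strategy as the paper: compute $G^{(1)}(b)=\dw(b,\alpha)\,\omega(0)$ and show every other competitor is strictly worse by invoking the dichotomy (\ref{eq:bdry_assumptions_1d}), using either the uniform lower bound $\dw(a,\alpha)\geq\dw(a,\alpha_-)>0$ to beat the small weight fluctuation, or the strict monotonicity (\ref{eq:weight_lower_bound}) when $\alpha$ may equal $a$. The paper's version is just a more compressed case split (only on $v(T)$ and on whether $v$ is constant, with the coarse bound $G^{(1)}(v)\geq C_W\omega_0$ for any nonconstant $v$); one small slip in your write-up is that the case $v(T)=a$ is not actually ``immediate'' from $\omega(T)$ being close to $\omega(0)$ alone --- the sub-case $v\equiv a$ genuinely needs the dichotomy --- but you correctly revisit exactly that sub-case in your delicate paragraph, so there is no real gap.
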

\begin{proof}
    Since (\ref{eq:bdry_assumptions_1d}) distinguishes two cases, we first assume that $ \alpha_\eps \geq \alpha_-$ so that $ \alpha\geq \alpha_-$ as well.
    The energy of the constant function $ b $ is $ G^{(1)} ( b ) = \dw ( b , \alpha ) \omega (0)$.
    Let $ v \in \bv ( I , \{a,b\} ) $ be another competitor. We first assume that $ v (T)=a$. Then for $ \omega_1 - \omega_0 $ sufficiently small, we have
    \begin{equation*}
        G^{(1)} (v )
        \geq
        \dw ( a, b ) \omega ( T ) 
        =
        \dw ( a,  \alpha ) \omega ( T ) 
        +
        \dw ( \alpha, b )
        ( \omega ( T ) - \omega ( 0 ) )
        +
        \dw ( \alpha , b ) \omega (0 )
        > \dw ( \alpha , b ) \omega ( 0 ).
    \end{equation*}
    If $ v(  T ) = b $, then a non-constant competitor has to make at least one transition, which yields 
    $ G^{(1)} (v ) \geq \dw ( a, b ) \omega_0$, and the same computation as above gives that $G^{(1)} ( v ) > G^{(1)} ( b )$.

    We now treat the second case in (\ref{eq:bdry_assumptions_1d}), namely that $ \omega'(0) > - \kappa_0 $. The interval length has been chosen so small that inequality (\ref{eq:weight_lower_bound}) holds. If $ v $ is any competitor with $ v ( T ) =a $, then 
    \begin{equation}
    \label{eq:simple_comp}
        G^{(1)} ( v ) 
        \geq
        \dw ( a, b ) \omega ( T ) 
        >
        \dw ( \alpha, b ) \omega ( 0 ) 
        = G^{(1)} ( b ),
    \end{equation}
    thus $ v $ can not be a minimizer. If $ v ( T ) = b $, but $ v $ is not constant, then $ v $ has at least one transition and a computation as in (\ref{eq:simple_comp}) yields the desired result.
\end{proof}
As a consequence, we define for $\eps>0$ the functional $G^{(2)}_{\eps}:\lp^1(I)\to [0,\infty]$ as 
\begin{equation*}
G^{(2)}_{\eps}(v) \coloneqq
\begin{cases}
    \int_I \left(\frac{1}{\eps^2}W(v(t))+(v'(t))^2\right)\omega(t)\dd{t}-\frac{\omega(0)}{\eps}\dw(b,\alpha) ,  & \textup{if $v\in \h^1(I)$, $v(0)=\alpha_{\eps}$ and $v(T)=\beta_{\eps}$}, \\
    \infty,  & \textup{otherwise}.
\end{cases}
\end{equation*}
\begin{proposition}
Assume that 
\begin{equation}
    \label{eq:assumptions_1d_limsup}
  \dw( \alpha_\eps , \alpha ), \dw ( \beta_\eps , b ) \in o(\eps),
\end{equation}
and (\ref{eq:bdry_assumptions_1d}) holds.
Then there exists a sequence $(v_{\eps})_\eps$ such that $v_{\eps}\in \h^1(I)$ for all $ \eps > 0$, $v_{\eps}(0)=\alpha_{\eps}$ and $v_{\eps}(T)=\beta_{\eps}$ for every $\eps>0$ with $v_{\eps}\to b$ in $\lp^1(I)$ as $\eps \to 0$ and 
\begin{equation*}
    \limsup_{\eps \to 0}G^{(2)}_{\eps}(v_{\eps}) \leq 
    \omega'(0)\int_0^{\infty}2\sqrt{W(z_{\alpha}(s))}z_{\alpha}'(s)s \dd{s}
    =
    \omega' ( 0 ) \int_0^\infty
    \dw ( z_\alpha( s )  , b )
    \dd{ s }
    \eqqcolon G^{(2)} ( b ) 
\end{equation*}
\end{proposition}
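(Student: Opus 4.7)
The plan is to build the recovery sequence explicitly as the piecewise concatenation already used in \Cref{prop:minimizer_properties}, equation (\ref{eq:def_recov_seq}): set $T_\eps \coloneqq \eps \Psi_{\alpha_\eps}(b)$, $S_\eps \coloneqq \eps \Psi_{\beta_\eps}(b)$ and take $v_\eps(t) = z_{\alpha_\eps}(t/\eps)$ on $[0,T_\eps]$, $v_\eps(t) = b$ on the middle, and $v_\eps(t) = z_{\beta_\eps}((T-t)/\eps)$ on $[T-S_\eps,T]$. Since $T_\eps, S_\eps \to 0$ and $v_\eps \in [a,b]$, the $\lp^1$-convergence $v_\eps \to b$ is immediate.

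Next I would compute the energy piece by piece. The middle contributes zero because $W(b) = 0$ and $v_\eps'\equiv 0$ there. For the right piece, the same substitution $s = (T-t)/\eps$ as in the proof of (\ref{eq:energy_bound}) yields a contribution bounded by $\omega_1 \dw(\beta_\eps, b)/\eps$, which is $o(1)$ by assumption (\ref{eq:assumptions_1d_limsup}). For the left piece, using $z_{\alpha_\eps}'(s)=\sqrt{W(z_{\alpha_\eps}(s))}$ and substituting $s = t/\eps$ gives
\begin{equation*}
    \int_0^{T_\eps}\left(\tfrac{1}{\eps^2}W(v_\eps) + (v_\eps')^2\right)\omega\dd{t}
    = \frac{1}{\eps}\int_0^{T^{(\alpha_\eps)}} 2\sqrt{W(z_{\alpha_\eps}(s))}\,z_{\alpha_\eps}'(s)\,\omega(\eps s)\dd{s}.
\end{equation*}

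The algebraic core of the argument is then to first replace $\dw(b,\alpha)$ by $\dw(b,\alpha_\eps)$ at a cost of $\omega(0)\dw(\alpha_\eps,\alpha)/\eps = o(1)$ (from assumption (\ref{eq:assumptions_1d_limsup})), and to observe that the change of variables $ r = z_{\alpha_\eps }( s ) $ gives
\begin{equation*}
    \dw(b,\alpha_\eps) = \int_0^{T^{(\alpha_\eps)}} 2\sqrt{W(z_{\alpha_\eps}(s))}\,z_{\alpha_\eps}'(s)\dd{s}.
\end{equation*}
Subtracting the constant $\omega(0)/\eps$ factor of this from the left-piece energy leaves
\begin{equation*}
    G^{(2)}_\eps(v_\eps)
    = \int_0^{T^{(\alpha_\eps)}} 2\sqrt{W(z_{\alpha_\eps}(s))}\,z_{\alpha_\eps}'(s)\,\frac{\omega(\eps s) - \omega(0)}{\eps}\dd{s} + o(1).
\end{equation*}
Since $\omega \in C^1([0,T])$, the quotient $(\omega(\eps s)-\omega(0))/\eps$ tends to $\omega'(0)s$ uniformly on bounded sets. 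By continuous dependence of the ODE (\ref{eq:cauchy_problem_full}) on the initial datum, $z_{\alpha_\eps}\to z_\alpha$ uniformly and $T^{(\alpha_\eps)}\to T^{(\alpha)}$. Since the integrand is dominated by a constant multiple of $s$ on the uniformly bounded interval $[0, T^{(\alpha_\eps)}]$ (the upper endpoint stays bounded thanks to the subquadratic growth via \Cref{p:prop W}), dominated convergence yields the desired limit $\omega'(0)\int_0^\infty 2\sqrt{W(z_\alpha(s))}z_\alpha'(s)s\dd{s}$. The equivalent form in terms of $\dw(z_\alpha(s),b)$ follows by integration by parts with $f(s) \coloneqq \dw(z_\alpha(s),b)$ satisfying $f'(s) = -2\sqrt{W(z_\alpha(s))}z_\alpha'(s)$ and $f \equiv 0$ for $s \geq T^{(\alpha)}$.

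The main subtlety will be the passage to the limit when $\alpha = a$, since then the Cauchy problem (\ref{eq:cauchy_problem_full}) does not have a unique solution; however the chosen branch $z_a$ leaves the well immediately, so $T^{(a)} = \int_a^b W^{-1/2}$ is still finite by \Cref{p:prop W}, and continuous dependence $z_{\alpha_\eps}\to z_a$ in $C^0([0,T^{(a)}+1])$ still holds along our particular selection. Apart from this nuance and the routine verification that the tail of $z_{\beta_\eps}$ behaves as claimed, the argument reduces to the Taylor expansion of $\omega$ performed above.
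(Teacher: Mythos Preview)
Your proposal is correct and follows essentially the same route as the paper: the same recovery sequence from (\ref{eq:def_recov_seq}), the same substitution $s=t/\eps$ on the left piece, the same $o(1)$ estimate for the right piece via $\dw(\beta_\eps,b)/\eps$, the same integration by parts for the alternative formula. The only cosmetic difference is that the paper writes out the Taylor expansion $\omega(t)=\omega(0)+\omega'(0)t+R(t)$ and splits the left piece accordingly into three summands $\mathcal{A}_\eps,\mathcal{B}_\eps,\mathcal{C}_\eps$ (with the right tail as a fourth summand $\mathcal{D}_\eps$), whereas you keep the difference quotient $(\omega(\eps s)-\omega(0))/\eps$ intact and pass to the limit in one step; both organizations are equivalent.
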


\begin{proof}
For given $ \eps > 0 $, define $ v_\eps $ as in the proof of \Cref{prop:minimizer_properties}, see equation (\ref{eq:def_recov_seq}).
Since $\omega\in C^1([0,1])$, we have that 
\begin{equation*}
    \omega(t)=\omega(0)+\omega'(0)t+R(t) \quad \textup{for every $t\in [0,T]$}
\end{equation*}
where $R\in C^1([0,1])$ with $R(0)=R'(0)=0$. Write
\begin{align*}
    G^{(2)}_{\eps}(v_{\eps})
    ={}&
    \left(\int_0^{T-S_{\eps}}\frac{W(v_{\eps})}{\eps}+\eps(v'_{\eps})^2 \dd{t}-\dw(b,\alpha)\right)\frac{\omega(0)}{\eps} 
    \\ 
    &+ 
    \frac{\omega'(0)}{\eps}\int_0^{T-S_{\eps}}\left(\frac{W(v_{\eps})}{\eps}+\eps(v'_{\eps})^2\right)t \dd{t} 
    \\ 
    & + 
    \frac{1}{\eps}\int_0^{T-S_{\eps}}\left(\frac{W(v_{\eps})}{\eps}+\eps(v'_{\eps})^2\right)R(t) \dd{t} 
    \\
    & + 
    \frac{1}{\eps}\int_{T-S_{\eps}}^{T}\left(\frac{W(v_{\eps})}{\eps}+\eps(v'_{\eps})^2\right)\omega \dd{t} 
    \\
    \eqqcolon {}& \mathcal{A}_{\eps}+\mathcal{B}_{\eps}+\mathcal{C}_{\eps}+\mathcal{D}_{\eps}.
\end{align*}
We first estimate $\mathcal{A}_{\eps}$. 
By definition of $v_{\eps}$ and the change of variable $s=v_{\eps}(t)$ we have that 
\begin{equation*}
\int_0^{T-S_{\eps}}\frac{W(v_{\eps})}{\eps}+\eps(v'_{\eps})^2 \dd{t}
=
\int_0^{T-S_{\eps}}2\sqrt{W(v_{\eps})}v'_{\eps}\dd{t} 
= 
2\int_{\alpha_{\eps}}^{b}\sqrt{W(s)} \dd{ s},
\end{equation*}
thus
\begin{equation*}
\mathcal{A}_\eps 
\leq
\omega( 0 ) \frac{ \dw ( \alpha_\eps, \alpha ) }{ \eps},
\end{equation*}
which vanishes by assumption (\ref{eq:assumptions_1d_limsup}).

Next we consider $\mathcal{B}_{\eps}$. Arguing as in the step before, using the change of variable $s=t/\eps$, we have that 
\begin{equation*}
    \mathcal{B}_{\eps}=
    2\omega'(0) \int_0^{T-S_{\eps}}\frac{W(v_{\eps}(t))}{\eps} \frac{t}{\eps} \dd{t}
    = 
    2\omega'(0) \int_0^{\infty}W(z_{\alpha_\eps} ( s ) )s \dd{s}
\end{equation*}
for $ z_{\alpha_\eps} $ as in equation (\ref{eq:cauchy_problem_full}). We can easily verify that $z_{\alpha_\eps}\to z_{\alpha}$ uniformly on $[0,\infty)$ as $\eps\to 0$. Thanks to dominated convergence, we obtain that
\begin{equation*}
    \lim_{\eps\to 0}\mathcal{B}_{\eps}= 2\omega'(0) \int_0^{\infty}W(z_{\alpha}(s))s \dd{s}
    =
    2\omega'(0)  \int_0^{\infty}\sqrt{W(z_{\alpha})}z'_{\alpha}(s)s \dd{s}.
\end{equation*}
Using integration by party, we note that
\begin{equation*}
    2 \omega'(0)
    \int_0^\infty \sqrt{ W ( z_\alpha ) } z_\alpha ' ( s ) s 
    \dd{ s }
    =
    -\int_0^\infty \partial_s \dw ( z_\alpha ( s ) , b ) s 
    \dd{ s }
    =
    \int_0^\infty \dw ( z_\alpha ( s ) , b ) 
    \dd{s }.
\end{equation*}
Let us now consider the third term $\mathcal{C}_{\eps}$. First we observe that due to the differentiability of $ \omega $ and the inequality $ T_\eps \leq C \eps $ due to the local integrability of $ W^{-1/2}$, we have
\begin{equation*}
    \lim_{\eps\to 0} \frac{\max_{t\in [0,T_{\eps}]}|R(t)|}{\eps}=0.
\end{equation*}
In particular
\begin{equation*}
   \abs{\mathcal{C}_{\eps}}
  \leq 
  \frac{\max_{t\in [0,T_{\eps}]}|R(t)|}{\eps} 
  \int_0^{T_{\eps}}\frac{W(v_{\eps})}{\eps}+\eps(v'_{\eps})^2 \dd{t}  
  \leq
  \frac{\max_{t\in [0,T_{\eps}]}|R(t)|}{\eps}\dw ( \alpha_-, b )  
\end{equation*}
vanishes as $ \eps $ tends to zero.

Lastly we estimate $\mathcal{D}_{\eps}$. By definition of $v_{\eps}$ and 
\Cref{p:prop W} we have that 
\begin{align*}
    \mathcal{D}_{\eps}
    &= 
    \frac{1}{\eps}\int_{T-S_{\eps}}^{T}\left(\frac{W(v_{\eps})}{\eps}+\eps(v'_{\eps})^2\right)\omega \dd{t}
    \leq 
    \frac{2\omega_1}{\eps}\int_{T-S_{\eps}}^{T}\sqrt{W(v_{\eps})}v'_{\eps} \dd{t} 
    \\ 
    & =
    \frac{2\omega_1}{\eps}\int_{\beta_{\eps}}^b \sqrt{W(s)}\dd{s}
    =
    \frac{2\omega_1}{\eps} \dw(\beta_\eps, b ),
\end{align*}
which vanishes by equation (\ref{eq:assumptions_1d_limsup}).
\end{proof}
\subsection{Lower bound for the second-order functional}

In this section we derive quantitative lower bounds for $ v_\eps \in \argmin G_\eps^{(0)} = \argmin G_\eps^{(2)}$. We start by rewriting the Euler--Lagrange equation (\ref{e:E-L eq}) and note that it is similar to the Euler--Lagrange equation in the unweighted case up to a small error $ \delta_\eps$.

\begin{lemma}
\label{prop:euler_lagrange}
For $\eps>0$ and $v_{\eps}\in \argmin G_{\eps}^{(0)}$, we have that 
\begin{equation}
    \label{eq:euler_lagrange_with_delta}
    v_{\eps}'(t)^2=\frac{W(v_{\eps}(t))+\delta_{\eps}(t)}{\eps^2} \quad \quad \textup{for every $t\in [0,T]$},
\end{equation}
where 
\begin{equation*}
    \delta_{\eps}(t)\coloneqq\frac{1}{\omega(t)}\left( \omega(0)  \left( \eps^2 v_{\eps}'(0)^2-W(v_{\eps}(0)) \right) -\eps\int_0^t \left(\frac{1}{\eps}W(v_{\eps}(t))+\eps(v_{\eps}')^2(t)\right) \omega'(t)\dd{t}\right).
\end{equation*}
\end{lemma}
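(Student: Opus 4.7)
The plan is to integrate the Euler--Lagrange equation once, in the spirit of conservation of energy. In the unweighted case ($ \omega \equiv 1 $) one obtains the classical first integral $ \eps^2 (v')^2 - W(v) \equiv \text{const}$; here the weight $ \omega $ produces a correction, and the content of the claim is simply an explicit formula for this correction.

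First I would multiply the Euler--Lagrange equation \eqref{e:E-L eq} by $ 2 \eps^2 \omega v_\eps'$. Rearranging gives
\begin{equation*}
   \omega \left( 2 \eps^2 v_\eps ' v_\eps'' - W'(v_\eps) v_\eps' \right)
   = - 2 \eps^2 \omega' (v_\eps')^2.
\end{equation*}
The left hand side is almost the derivative of $ \omega ( \eps^2 (v_\eps')^2 - W(v_\eps))$; indeed, by the product rule,
\begin{equation*}
   \frac{\dd}{\dd t}\Bigl[\omega \bigl(\eps^2 (v_\eps')^2 - W(v_\eps)\bigr)\Bigr]
   = \omega' \bigl(\eps^2 (v_\eps')^2 - W(v_\eps)\bigr)
   + \omega \bigl( 2\eps^2 v_\eps' v_\eps'' - W'(v_\eps) v_\eps' \bigr).
\end{equation*}
Substituting the previous identity, the two contributions of $ \omega'(v_\eps')^2$ combine and I obtain the clean formula
\begin{equation*}
   \frac{\dd}{\dd t}\Bigl[\omega \bigl(\eps^2 (v_\eps')^2 - W(v_\eps)\bigr)\Bigr]
   = - \omega' \bigl( \eps^2 (v_\eps')^2 + W(v_\eps) \bigr).
\end{equation*}

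Integrating this identity from $ 0 $ to $ t $, dividing by $ \omega(t) > 0 $, and factoring an $ \eps $ out of $ \eps^2 (v_\eps')^2 + W(v_\eps) = \eps\left( \eps (v_\eps')^2 + \tfrac{1}{\eps} W(v_\eps) \right)$, I recover exactly
\begin{equation*}
   \eps^2 (v_\eps')^2(t) - W(v_\eps(t)) = \delta_\eps(t),
\end{equation*}
with $ \delta_\eps $ as defined in the statement. Dividing through by $ \eps^2$ yields \eqref{eq:euler_lagrange_with_delta}. The only regularity needed along the way is $ v_\eps \in C^2([0,T])$ and $ \omega \in C^1([0,T])$, both of which are already available from \Cref{prop:minimizer_properties}, so no technical obstacle arises; the whole argument is just the weighted analogue of conservation of energy.
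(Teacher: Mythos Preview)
Your proof is correct and follows essentially the same approach as the paper: multiply the Euler--Lagrange equation \eqref{e:E-L eq} by $2\eps^2 \omega v_\eps'$, recognize the derivative of $\omega(\eps^2(v_\eps')^2 - W(v_\eps))$, and integrate from $0$ to $t$. The paper's version is slightly terser (it names $\hat{\delta}_\eps \coloneqq \eps^2(v_\eps')^2 - W(v_\eps)$ and writes the key identity directly as $(\omega\hat{\delta}_\eps)' = -\omega'(W(v_\eps) + \eps^2(v_\eps')^2)$), but the argument is identical.
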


\begin{proof}
Define the function $\hat{\delta}_{\eps}:[0,T]\to \R$ as 
\begin{equation*}
    \hat{\delta}_{\eps}(t):=\eps^2 v_{\eps}'(t)^2-W(v_{\eps}(t)).
\end{equation*}
Multiplying equation (\ref{e:E-L eq}) by $2\eps^2v'(t)\omega(t)$ yields
\begin{equation*}
   \omega  (\eps^2 (v_{\eps}')^2-W(v_{\eps}))'  =-2\eps^2\omega'(v_{\eps}')^2
\end{equation*}
for every $t\in [0,T]$ and in particular
\begin{equation*}
    (\omega \hat{\delta}_{\eps})'(t) =-\omega'(t)(W(v_{\eps}(t))+\eps^2(v_{\eps}'(t))^2) 
\end{equation*}
for every $t\in [0,T]$. Therefore we obtain that 
\begin{equation*}
    \hat{\delta}_{\eps}(t)=\frac{1}{\omega(t)}\left(\omega ( 0 )   \left( \eps^2 v_{\eps}'(0)^2-W(v_{\eps}(0)) \right) -\eps\int_0^t \omega'(t)\left(\frac{1}{\eps}W(v_{\eps}(t))+\eps(v_{\eps}'(t))^2\right)\dd{t}\right)
\end{equation*}
for every $t\in [0,T]$, which is what we wanted to show.
\end{proof}

Next we derive an upper bound for the derivative of minimizers $ v_\eps \in \argmin G_\eps^{(0)}$, which has been shown in \cite[Cor.~3.9]{fonseca2025secondordergammalimitcahnhilliardfunctional1}.
\begin{lemma}\label{p:upper bound derivative}
There exists a constant $C_0>0$ only depending on $ W $, $ \norm{\omega}_{C^2} $ and $ T $ such that for every $\eps\in (0, T)$ and for every $v_\eps\in \argmin G_{\eps}^{(0)}$ we have that 
\begin{equation*}
   |v_\eps'(t)|\leq \frac{C_0}{\eps}
\end{equation*}
for every $t\in [0,T]$.
\end{lemma}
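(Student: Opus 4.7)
The strategy is to combine the energy bound from \Cref{prop:minimizer_properties} with the quasi-conservation identity of \Cref{prop:euler_lagrange}, in a slightly rewritten form, after selecting a point where $ (v_\eps')^2 $ is small by an averaging argument.

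First, since $ v_\eps $ is a minimizer and $ \alpha_\eps, \beta_\eps \in [a,b] $, the energy bound (\ref{eq:energy_bound}) gives
\begin{equation*}
    \omega_0 \eps^2 \int_0^T (v_\eps')^2 \dd{ t }
    \leq
    \int_0^T \eps^2 (v_\eps')^2 \omega \dd{ t }
    \leq G_\eps^{(0)}( v_\eps ) \leq C \eps,
\end{equation*}
hence $ \int_0^T (v_\eps')^2 \dd{ t } \leq C / \eps $. By the mean value theorem for integrals, there exists $ t_0 \in [0,T] $ with $ (v_\eps'(t_0))^2 \leq C/(T \eps) $, so that $ \eps^2 (v_\eps'(t_0))^2 \leq C \eps / T $ is uniformly bounded in $ \eps $. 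Since $ W $ is bounded on $[a,b]$ and $ v_\eps $ takes values in $ [a,b] $, we conclude that
\begin{equation*}
    \abs{ \hat{\delta}_\eps ( t_0 ) } \coloneqq \abs{ \eps^2 ( v_\eps'(t_0))^2 - W ( v_\eps ( t_0 ) ) } \leq C.
\end{equation*}

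Next, I would revisit the derivation in \Cref{prop:euler_lagrange}: multiplying (\ref{e:E-L eq}) by $ 2 \eps^2 v_\eps' \omega $ shows that $ ( \omega \hat \delta_\eps )' = -\omega'(W ( v_\eps ) + \eps^2 ( v_\eps')^2 ) $, and integrating from $ t_0 $ instead of $ 0 $ yields
\begin{equation*}
    \omega ( t ) \hat \delta_\eps ( t )
    =
    \omega ( t_0 ) \hat \delta_\eps ( t_0 )
    - \int_{t_0}^t \omega'(s) \bigl( W ( v_\eps (s) ) + \eps^2 ( v_\eps'(s))^2 \bigr) \dd{ s }
\end{equation*}
for every $ t \in [0,T] $. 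The first term on the right is bounded by the previous step, and for the integral term,
\begin{equation*}
    \left| \int_{t_0}^t \omega'(s)\bigl(W(v_\eps) + \eps^2(v_\eps')^2\bigr) \dd{ s } \right|
    \leq
    \frac{\| \omega' \|_\infty}{\omega_0} G_\eps^{(0)}(v_\eps)
    \leq C \eps,
\end{equation*}
again by the energy bound. Dividing by $ \omega ( t ) \geq \omega_0 > 0 $, we obtain $ |\hat \delta_\eps ( t ) | \leq C $ uniformly in $ t \in [0,T] $ and $ \eps > 0 $.

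Finally I would conclude by invoking the pointwise identity $ \eps^2 ( v_\eps'(t))^2 = W ( v_\eps (t)) + \hat \delta_\eps (t ) $ from \Cref{prop:euler_lagrange}, which yields $ \eps^2 ( v_\eps' ( t ) )^2 \leq \max_{[a,b]} W + C \eqqcolon C_0^2 $. This gives the claimed bound $ | v_\eps'(t) | \leq C_0 / \eps $. I do not expect a major obstacle here: the only subtle ingredient is the averaging step, which is necessary because $ \hat \delta_\eps ( 0 ) $ involves $ \eps^2 ( v_\eps'(0) )^2 $ which is a priori uncontrolled; choosing $ t_0 $ by the mean value theorem bypasses this issue, and the quasi-conservation identity then propagates the bound to the whole interval.
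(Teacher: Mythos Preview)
Your proof is correct and in fact a bit cleaner than the paper's argument. Both start from the same averaging step, producing a point $t_0$ at which $\eps^2(v_\eps'(t_0))^2 \lesssim \eps$. From there, however, the two arguments diverge. The paper integrates the \emph{first-order} form of the Euler--Lagrange equation, $(2\eps^2 v_\eps' \omega)' = W'(v_\eps)\omega$, from $t_0$; since $W'$ is merely bounded on $[a,b]$ this only yields the crude estimate $\eps^2|v_\eps'| \leq C$, i.e.\ $|v_\eps'| \leq C/\eps^2$. The paper then bootstraps: the Euler--Lagrange equation gives $\eps^2|v_\eps''| \leq C$, and applying the mean value theorem on a subinterval of length $\eps$ (using $a \leq v_\eps \leq b$) upgrades this to $|v_\eps'| \leq C/\eps$. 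You instead exploit the \emph{energy} identity $(\omega\hat\delta_\eps)' = -\omega'\bigl(W(v_\eps) + \eps^2(v_\eps')^2\bigr)$ from \Cref{prop:euler_lagrange}, integrated from the well-chosen $t_0$. Because the right-hand side is the full energy density, the bound (\ref{eq:energy_bound}) controls its integral directly, and the pointwise identity $\eps^2(v_\eps')^2 = W(v_\eps) + \hat\delta_\eps$ finishes in one step. Your route avoids the bootstrap and the second-derivative bound entirely, at the price of invoking the slightly more delicate identity of \Cref{prop:euler_lagrange} rather than the raw Euler--Lagrange equation.
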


\begin{proof}
By the energy bound (\ref{eq:energy_bound}), there exists $C_0 > 0 $ such that
\begin{equation*}
    G_{\eps}^{(0)}(v_\eps)\leq \eps C_0.
\end{equation*}
Thanks to the continuity of $v_\eps'$ and $\omega$, we can find $t_0\in [0,T]$ such that 
\begin{equation*}
    \eps^2(v_\eps'(t_0))^2\omega(t_0)=\frac{1}{T}\int_0^T\eps^2(v_\eps'(t))^2 \omega(t) \dd{t} \leq \frac{\eps C_0}{T}
\end{equation*}
and thus
\begin{equation*}
    \eps |v_\eps'(t_0)|\leq \sqrt{\frac{\eps C_0}{T \omega_0}}.
\end{equation*}
By the Euler--Lagrange equation (\ref{e:E-L eq}) and the product rule $ ( v'\omega)'=v''\omega + v'\omega' $, we have that
\begin{equation*}
2\eps^2v_\eps'(t)\omega(t)=2\eps^2v_\eps'(t_0)\omega(t_0)+\int_{t_0}^tW'(v_\eps(s))\omega(s) \dd{s} 
\end{equation*}
for every $t\in [0,T]$. Since $a\leq v \leq b$, we deduce
\begin{equation}
\label{eq:estim_for_v'}
  2\eps^2|v_\eps'(t)|
  \leq 
  2\eps^{3/2} \sqrt{\frac{C_0}{T \omega_0^3}}\omega_1+ \max_{[a,b]}|W'|\int_0^T\omega \dd{s}  
  \leq 
  C_1 
\end{equation}
for every $t\in [0,T]$ and some suitable $ C_1 >0 $. 
Again by the Euler--Lagrange equation \eqref{e:E-L eq} and by inequality (\ref{eq:estim_for_v'}), we estimate for the second derivative
\begin{align*}
    \eps^2 |v_\eps''(t)|\leq C_2.
\end{align*}
Fix now $t\in [0,T]$ and let $t_1\in [0,T]$ be such that $|t-t_1|=\eps$. By the mean value theorem there exists $\theta$ between $t$ and $t_1$ such that 
\begin{equation*}
    v_\eps(t)-v_\eps(t_1)=v_\eps'(\theta)(t-t_1),
\end{equation*}
By the Fundamental Theorem of Calculus we therefore have that for every $t\in [0,T]$
\begin{equation*}
   |v_\eps'(t)|=\left |v_\eps'(\theta)+\int_{\theta}^t v_\eps''(s)\dd{s}\right|=\left |\frac{v_\eps(t)-v_\eps(t_1)}{t-t_1}+\int_{\theta}^t v_\eps''(s)\dd{s} \right|\leq \frac{(b-a)}{\eps}+ \frac{C_2}{2\eps}\leq \frac{C}{\eps},
\end{equation*}
for some $C>0$ that depends only on $W$, $\omega$ and $T$.
\end{proof}

\begin{lemma}  
\label{lem:conv_of_rescaled}
Let $v_{\eps} $ be a minimizer of $ G_{\eps}^{(0)}$ and define
\begin{equation*}
    w_{\eps}(t) \coloneqq
    \begin{cases}
        v_{\eps}(\eps t),  & \text{if } 0\leq t \leq T\eps^{-1}, \\
        \beta_{\eps},  & \text{if } T\eps^{-1} < t.
    \end{cases}
\end{equation*}
Then  $ w_{\eps}\to z_{\alpha}$ in $\w^{1,\infty}_{\textup{loc}}([0,\infty))$ as $ \eps $ tends to zero.
\end{lemma}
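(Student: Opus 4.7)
The plan is to combine an Arzelà--Ascoli compactness argument with an identification of the limit via the rescaled Euler--Lagrange equation of \Cref{prop:euler_lagrange}, and then upgrade the weak convergence of derivatives using the ODE structure. First, by \Cref{p:upper bound derivative} we have $|w_\eps'(t)| = \eps |v_\eps'(\eps t)| \leq C_0$ on $[0, T/\eps]$, and $w_\eps$ is constant past $T/\eps$, so $(w_\eps)$ is equi-Lipschitz on $[0,\infty)$ with values in $[a,b]$. Hence Arzelà--Ascoli yields a subsequence (not relabelled) and a function $w \in \w^{1,\infty}_{\textup{loc}}([0,\infty))$ such that $w_\eps \to w$ uniformly on compact sets, $w_\eps' \weakto^{*} w'$ in $\lp^\infty_{\textup{loc}}$, and $w(0) = \alpha$.

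Next, I would rescale the identity \eqref{eq:euler_lagrange_with_delta} to obtain
\[
(w_\eps'(t))^2 \;=\; W(w_\eps(t)) + \delta_\eps(\eps t) \quad \text{on } [0, T/\eps],
\]
and show that $\delta_\eps(\eps \cdot) \to 0$ uniformly on compact sets. The integral contribution in $\delta_\eps$ is bounded by $\eps \|\omega'\|_\infty G_\eps^{(1)}(v_\eps)/\omega_0$, which is $O(\eps)$ because $G_\eps^{(1)}(v_\eps) = G_\eps^{(0)}(v_\eps)/\eps \leq C$ by the energy bound \eqref{eq:energy_bound}. For the boundary piece $\omega(0)(\eps^2 v_\eps'(0)^2 - W(\alpha_\eps))/\omega(\eps t)$, I would appeal to the Modica--Mortola squared identity
\[
\int_0^T \eps \bigl( v_\eps' - \tfrac{\sqrt{W(v_\eps)}}{\eps} \bigr)^2 \omega \, \dt \;=\; G_\eps^{(1)}(v_\eps) - 2\int_0^T \sqrt{W(v_\eps)}\, v_\eps'\, \omega\, \dt,
\]
and combine the sharp upper bound from the recovery-sequence construction in \Cref{prop:minimizer_properties} with the lower bound supplied by \Cref{p:first order gamma limit} to conclude that the left-hand side tends to $0$. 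After the substitution $s = t/\eps$ this becomes $\int_0^{T/\eps} (w_\eps' - \sqrt{W(w_\eps)})^2 \omega(\eps s)\, \ds \to 0$, and together with the near-constancy of $\delta_\eps(\eps\cdot)$ in $s$ (which follows from the $O(\eps)$ control of the integral piece), this forces $(\eps v_\eps'(0))^2 \to W(\alpha)$ and hence $\delta_\eps(\eps \cdot) \to 0$ locally uniformly. Passing to the limit, $w$ satisfies $w' = \sqrt{W(w)}$ a.e., $w(0) = \alpha$ and $w \in [a,b]$.

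To identify $w = z_\alpha$, note that if $\alpha > a$, then the Cauchy problem \eqref{eq:cauchy_problem_full} is uniquely solvable, so $w = z_\alpha$. In the critical case $\alpha = a$, both $z_a$ and the constant $a$ solve the ODE, and I need to rule out the latter. Here I would use assumption \eqref{eq:bdry_assumptions_1d}: since $\alpha_\eps \to a < \alpha_-$, the first alternative fails for small $\eps$, so the second must hold and $\omega'(0) \geq -\kappa_0 > 0$. Thus $\omega$ is strictly increasing near $0$; together with the bound \eqref{eq:energy_bound}, which gives $G_\eps^{(1)}(v_\eps) \leq \omega_1 \dw(a,b) + \omega_1 \dw(\beta_\eps, b)$ with $\dw(\beta_\eps, b) = o(1)$ (as $\beta_\eps$ is exponentially close to $b$), a matching against the $\omega$-weighted Modica--Mortola lower bound forces the entire transition from $a$ to near $b$ to be localized in a bounded neighbourhood of $0$ in the rescaled variable. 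This excludes $w \equiv a$ and yields $w = z_a$.

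Finally, the pointwise identity $(w_\eps')^2 = W(w_\eps) + \delta_\eps(\eps \cdot) \to (z_\alpha')^2$ uniformly on compacts, combined with the sign information $w_\eps' \geq 0$ (which follows from the standard monotonicity of one-dimensional Modica--Mortola minimizers), upgrades the weak$^{*}$ convergence of derivatives to uniform convergence on compact subsets of $[0,\infty)$, so $w_\eps \to z_\alpha$ in $\w^{1,\infty}_{\textup{loc}}$. Since every subsequence admits a further subsequence with the same limit $z_\alpha$, the full sequence converges. The main obstacle is the identification step in the degenerate regime $\alpha = a$, where uniqueness for the Cauchy problem fails and one must exploit the strict monotonicity $\omega'(0) > 0$ together with the sharp two-sided energy expansion to select the immediate-departure profile $z_a$ over the stationary one.
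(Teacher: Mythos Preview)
Your approach differs from the paper's and contains two genuine gaps.

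\textbf{Gap 1: the Modica--Mortola squared identity does not force $\delta_\eps(0)\to 0$.} You claim that combining the upper bound \eqref{eq:energy_bound} with the $\liminf$-inequality of \Cref{p:first order gamma limit} shows
\[
\int_0^T \eps\Bigl(v_\eps' - \tfrac{\sqrt{W(v_\eps)}}{\eps}\Bigr)^2\omega\,\dt \to 0.
\]
But integrating by parts in the subtracted term, using $v_\eps\to b$ in $\lp^1$, one finds that the right-hand side of your identity converges to at most $(\omega_1-\omega(0))\,\dw(\alpha,b)$, which is strictly positive unless $\omega$ attains its maximum at $0$. So the squared integral need not vanish, and even if it did, relating an $\lp^1$-in-$s$ smallness of $(w_\eps'-\sqrt{W(w_\eps)})^2$ to the pointwise quantity $\eps^2 v_\eps'(0)^2-W(\alpha_\eps)$ requires further argument you do not supply.

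\textbf{Gap 2: monotonicity $w_\eps'\ge 0$ is not ``standard'' here.} For the \emph{weighted} functional there is no general monotonicity of minimizers; the paper establishes monotonicity only on $[0,S_\eps]$ in \Cref{lem:properties_of_1d_minimizers}, which comes \emph{after} the present lemma and in fact uses it. So you cannot invoke it to fix the sign of $w_\eps'$ and thereby upgrade $(w_\eps')^2\to (z_\alpha')^2$ to $w_\eps'\to z_\alpha'$.

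\textbf{How the paper avoids both issues.} Instead of working with the first-order identity \eqref{eq:euler_lagrange_with_delta}, the paper passes to the limit in the \emph{weak} second-order Euler--Lagrange equation for the rescaled minimizers, obtaining $2w_0''=W'(w_0)$ directly; the constant $c$ in $(w_0')^2=W(w_0)+c$ is then shown to vanish via the finite-energy bound. The upgrade to $\w^{1,\infty}_{\textup{loc}}$ comes not from sign information but from a uniform bound on $|w_\eps''|$ (via \eqref{e:E-L eq} and \Cref{p:upper bound derivative}) and Arzel\`a--Ascoli applied to $w_\eps'$. This route never needs $\delta_\eps(0)\to 0$ a priori, nor monotonicity of $v_\eps$.

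Your treatment of the degenerate case $\alpha=a$ (ruling out $w\equiv a$ via $\omega'(0)>0$ and a sharp energy expansion) is more explicit than the paper's, but it rests on the same two unproven inputs above, so as written it does not close.
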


\begin{proof}
Thanks to Proposition \ref{p:upper bound derivative} we have for every $\eps>0$ sufficiently small that $w_{\eps}\in C^2_{\mathrm{loc}} ([0,\infty ) ) $ and
\begin{equation}\label{e:bound Linf of derivatives}
    |w_{\eps}'(t)|\leq C_0
\end{equation}
for all $ t \geq 0 $. 
In particular, for every $N\in \N$ there exists a constant $C_N>0$, such that 
\begin{equation*}
    \|w_{\eps}\|_{\h^1((0,N))}\leq C_N
\end{equation*}
for every $\eps\in (0,1)$. By a diagonalization argument, we find a non-relabelled subsequence and $w_0\in \h^1_{\textup{loc}}((0,\infty))$ such that $w_{\eps}\to w_0$ uniformly on compact subsets of $[0,\infty)$ and $w'_{\eps}\weakto w'_0$ in $L^2((0,N))$ for every $N\in \N$. 
Using the change of variable $s=t\eps^{-1}$, we see that
$w_{\eps}$ is a minimizer of the functional $H_{\eps}$ given by
\begin{equation*}
    H_{\eps}(w):=\int_0^{T\eps^{-1}}(W(w(s))+(w'(s))^2)\omega_{\eps}(s) \dd{s} 
\end{equation*}
where the weight is rescaled by $ \omega_{\eps}(s) \coloneqq \omega(\eps s)$ and subject to the Dirichlet boundary conditions $ w ( 0 ) = \alpha_\eps $ and $ w ( \eps^{-1} T ) = \beta_\eps$. 
Using outer variations we obtain that
\begin{equation*}
    \int_0^{T\eps^{-1}}(W'(w_{\eps})\phi+2w_{\eps}'\phi')\omega_{\eps} \dd{s}=0
\end{equation*}
for every $\phi\in C^{\infty}_c((0,T\eps^{-1}))$. 
Thus for every $N\in \N$ and every $ \Phi \in C_{c}^\infty ((0, N ) )$ we have
\begin{equation*}
    \int_0^N(W'(w_0)\phi+2w_0'\phi')\omega(0)\dd{s}=0.
\end{equation*}
By \Cref{prop:minimizer_properties}, $w_0$ is in $ C^2([0,\infty))$ and satisfies 
\begin{equation*}
    \begin{cases}
        2w_0''(s)=W'(w_0(s)) \; & \textup{for every $s\in [0,\infty)$}\\
        w_0(0)=\alpha.
    \end{cases}
\end{equation*}
Multiplying by $w_0'(s)$ and integrating, we obtain that there exists $c\in \R$ such that
\begin{equation}\label{e:E-L 2 of w_0}
    (w_0'(s))^2-W(w_0(s))=c
\end{equation}
for every $s\in [0,\infty)$. 
We note that we already have $ c = 0 $ since $ W $ is integrable and $ w_0' $ square-integrable. In fact, we note by the uniform convergence and weak convergence of the derivatives on compact sets, we have due to the dominated and monotone convergence theorem that
\begin{align*}
    \omega_0 \int_0^\infty W ( w_0 ) + (w_0')^2 \dd{ s }
    & =
    \lim_{N\to \infty }
    \omega_0 \int_0^N W ( w_0) + (w_0')^2 \dd{ s }
    \\
    & \leq 
    \liminf_{N \to \infty }
    \liminf_{\eps \to 0}
    \omega_0 
    \int_0^N W ( w_0 ) + (w_\eps ')^2 \dd{ s }
    \\ 
    & \leq
    \liminf_{\eps \to 0}
    \int_0^{ \eps^{-1} T }
    ( W ( w_\eps ) + (w_\eps ')^2  )\omega_\eps
    \dd{ s }
    \\
    & =
    \liminf_{\eps \to 0 }
    \frac{1}{\eps}
    \int_0^T 
    \left( 
    W ( v_\eps )
    +
    \eps^2 (v_\eps ' )^2
    \right) 
    \omega \dd{ s },
\end{align*}
which is finite by \Cref{prop:minimizer_properties}.
Moreover, thanks to \eqref{e:bound Linf of derivatives} and the Euler--Lagrange equation \eqref{e:E-L eq}, there exists $C'\in (0,\infty)$, depending only on $W$, $\norm{\omega}_{ C^2 } $ and $T$, such that for fixed $N\in \N$, for every $\eps>0$ small enough we have that $w_{\eps}\in C^2([0,N])$ and
\begin{equation*}
    |w_{\eps}'(s)|+|w_{\eps}''(s)|\leq C'
\end{equation*}
for every $s\in [0,N]$. In particular, by the Arzelà--Ascoli Theorem, we can conclude by equation (\ref{e:E-L 2 of w_0}) that $w_{\eps}\to z_{\alpha}$ in $\mathrm{W}^{1,\infty}_{\textup{loc}}((0,\infty))$, up to a subsequence, as $\eps \to 0$.
\end{proof}
In the case $ \alpha_\eps \geq \alpha_-$, it is energetically not favourable for minimizers to make a transition down to $ a $. Even though we could save energy by doing the transition to $ \beta_\eps $ close to $ T $ since the weight could in principle be decreasing, we would have to transition the area $ [a, \alpha_-] $ twice. But since the weight is close to being constant, this can not happen. We now make this argument rigorous.
\begin{lemma}
\label{prop:lower_bound}
Assume that $ \alpha_\eps, \beta_\eps \in [ \alpha_-, b ] $ for all $ \eps > 0 $ and let $ v_\eps \in \argmin G_\eps^{(0)}$.
Fix any $ m^\ast \in (a, \alpha_-)$. Then there exists $ \eps_0 > 0 $ and $ \delta> 0 $ only depending on $ m^\ast$ and $ W $ such that if $ \omega_1 - \omega_0 < \delta $, then
\begin{equation*}
    \min_{[0,T]}v_{\eps}\geq m^*
    \quad
    \text{for all }
    0 < \eps < \eps_0.
\end{equation*}
\end{lemma}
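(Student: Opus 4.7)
The plan is to argue by contradiction: suppose $m_{\min} \coloneqq \min_{[0,T]} v_\eps < m^*$. Since $v_\eps(0), v_\eps(T) \in [\alpha_-, b]$ and $m^* < \alpha_-$, by continuity $v_\eps$ must attain $m_{\min}$ at some interior point $t_0 \in (0, T)$. The idea is that the Modica--Mortola lower bound for any such $v_\eps$ already exceeds the upper bound (\ref{eq:energy_bound}) when the weight is almost constant, giving the desired contradiction.

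For the lower bound I apply the elementary inequality $W(v) + \eps^2 (v')^2 \geq 2\eps \sqrt{W(v)} |v'|$ and bound $\omega \geq \omega_0$. Splitting the integral at $t_0$ and using that $\int_s^r \sqrt{W}|v_\eps'| \dd{t} \geq \frac{1}{2}\dw(v_\eps(s), v_\eps(r))$ (since $\Phi(x)\coloneqq \int^x \sqrt{W}$ is monotone), I obtain
\begin{equation*}
    G_\eps^{(0)}(v_\eps) \geq \eps \omega_0 \bigl( \dw(\alpha_\eps, m_{\min}) + \dw(m_{\min}, \beta_\eps) \bigr).
\end{equation*}
On the other hand the energy bound (\ref{eq:energy_bound}) of \Cref{prop:minimizer_properties}, valid for all sufficiently small $\eps$, gives $G_\eps^{(0)}(v_\eps) \leq \eps \omega_1 (\dw(\alpha_\eps, b) + \dw(\beta_\eps, b))$. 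Using the additivity $\dw(r,b) = \dw(r,s) + \dw(s,b)$ for $a \leq r \leq s \leq b$, I rewrite $\dw(\alpha_\eps, m_{\min}) = \dw(m_{\min}, b) - \dw(\alpha_\eps, b)$ and similarly for $\beta_\eps$. Combining both estimates and simplifying yields
\begin{equation*}
    2\omega_0 \dw(m_{\min}, b) \leq (\omega_0 + \omega_1)\bigl(\dw(\alpha_\eps, b) + \dw(\beta_\eps, b)\bigr).
\end{equation*}

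Now I plug in $\dw(\alpha_\eps, b), \dw(\beta_\eps, b) \leq \dw(\alpha_-, b)$ and $\dw(m_{\min}, b) > \dw(m^*, b) = \dw(\alpha_-, b) + \dw(m^*, \alpha_-)$ to deduce
\begin{equation*}
    \dw(m^*, \alpha_-) < \frac{\omega_1 - \omega_0}{\omega_0} \dw(\alpha_-, b).
\end{equation*}
Since $\omega(0) = 1$ and $\omega_1 - \omega_0 < \delta$, we have $\omega_0 \geq 1 - \delta \geq 1/2$ for $\delta$ small, so the right-hand side is bounded by $2\delta \dw(\alpha_-, b)$. Choosing $\delta < \dw(m^*, \alpha_-) / (2 \dw(\alpha_-, b))$ — a quantity depending only on $W$, $\alpha_-$ and $m^*$ — produces the required contradiction. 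The main subtlety is that the argument requires the upper bound (\ref{eq:energy_bound}) to be close enough to $2 \eps \dw(\alpha_-, b)$, which is why the smallness of $\omega_1 - \omega_0$ is essential; otherwise, a dip to $m^*$ could in principle be compensated by exploiting variations in the weight, and the two bounds would no longer be directly comparable.
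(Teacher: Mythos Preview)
Your overall strategy --- compare the Modica--Mortola lower bound for $G_\eps^{(0)}(v_\eps)$ against the energy upper bound \eqref{eq:energy_bound} --- is exactly the approach taken in the paper. However, there is a genuine gap in your final algebraic step.

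From the inequality
\[
    2\omega_0\,\dw(m_{\min},b)\;\leq\;(\omega_0+\omega_1)\bigl(\dw(\alpha_\eps,b)+\dw(\beta_\eps,b)\bigr)
\]
together with $\dw(\alpha_\eps,b),\dw(\beta_\eps,b)\leq\dw(\alpha_-,b)$ and $\dw(m_{\min},b)>\dw(m^*,\alpha_-)+\dw(\alpha_-,b)$, what actually follows is
\[
    \omega_0\,\dw(m^*,\alpha_-)\;<\;\omega_1\,\dw(\alpha_-,b),
\]
not the claimed $\dw(m^*,\alpha_-)<\tfrac{\omega_1-\omega_0}{\omega_0}\dw(\alpha_-,b)$. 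The former inequality is useless: for $\omega_1\approx\omega_0$ it merely says $\dw(m^*,\alpha_-)\lesssim\dw(\alpha_-,b)$, which can easily hold and produces no contradiction. The crude estimate $\dw(\beta_\eps,b)\leq\dw(\alpha_-,b)$ is the culprit --- it throws away exactly the cancellation you need.

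The fix, which is what the paper does, is to use the standing assumption $\beta_\eps\to b$ of the section (so $\dw(\beta_\eps,b)\to 0$). Write the upper bound as $\omega_1\bigl(2\dw(\beta_\eps,b)+\dw(\alpha_\eps,\beta_\eps)\bigr)$ and the lower bound as $\omega_0\bigl(2\dw(m_{\min},\alpha_\eps)+\dw(\alpha_\eps,\beta_\eps)\bigr)$; subtracting gives
\[
    2\omega_0\,\dw(m_{\min},\alpha_\eps)\;\leq\;2\omega_1\,\dw(\beta_\eps,b)+(\omega_1-\omega_0)\,\dw(\alpha_\eps,\beta_\eps).
\]
Now the first term on the right tends to $0$ and the second is bounded by $(\omega_1-\omega_0)\dw(\alpha_-,b)$, which \emph{does} yield the desired contradiction for $\omega_1-\omega_0$ small and $\eps$ small.
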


\begin{proof}
Fix $\eps> 0$ small, let $m_{\eps}\coloneqq\min_{[0,T]}v_{\eps}$ and $t_\eps\in [0,T]$ be such that $v_\eps(t_\eps)=m_\eps$. By Young's inequality and substitution, we get
\begin{align*}
   \frac{G_{\eps}^{(0)}(v_{\eps})}{\eps} & \geq 
   2\omega_0\int_0^{t_\eps}\sqrt{W ( v_\eps ) } \abs{v_\eps ' }\dd{s}
   +
   2\omega_0\int_{t_\eps}^T\sqrt{W ( v_\eps ) } \abs{v_\eps' }\dd{s} 
   \\ 
   & \geq
   \omega_0\int^{\alpha_{\eps}}_{m_\eps} 2\sqrt{W(s)} \dd{s}+\omega_0\int^{\beta_{\eps}}_{m_\eps} 2\sqrt{W(s)} \dd{s} 
   \\ 
   & = 
   2\omega_0\int^{\alpha_{\eps}}_{m_\eps} 2\sqrt{W(s)} \dd{s}+\omega_0\int^{\beta_{\eps}}_{\alpha_{\eps}} 2\sqrt{W(s)} \dd{s}.
\end{align*}
Since $ v_\eps $ is a minimizer, we therefore obtain by inequality (\ref{eq:energy_bound}) that
\begin{align*}
   \omega_0 \left( 
   2 \int_{ m_\eps }^{ \alpha_\eps} 
   2 \sqrt{W ( s )} \dd{ s }
   +
   \int_{ \alpha_\eps }^{\beta_\eps }
   2 \sqrt{W ( s )} \dd{ s }
   \right)
   & \leq
   \omega_1 \left( \dw ( \alpha_\eps , b ) + \dw ( \beta_\eps , b ) \right)
   \\
   & =
   \omega_1 \left( 
   2 \int_{ \beta_\eps }^b 2\sqrt{ W ( s )} \dd{ s }
   +
   \int_{ \alpha_\eps }^{ \beta_\eps } 
   2 \sqrt{W ( s )}
   \dd{ s }
   \right).
\end{align*}
In the limit $ \eps \to 0$, this inequality implies
\begin{align*}
    \limsup_{ \eps \to 0 } 
    2 \omega_0 \int_{m_\eps }^{ \alpha_-}
    2 \sqrt{W(s)}
    \dd{ s }
    \leq
    \limsup_{\eps \to 0}
    2\omega_0
     \int_{ m_\eps }^{\alpha_\eps }
    2 \sqrt{W ( s )} 
    \dd{ s }
    &\leq
    (\omega_1 - \omega_0 )
    \int_\alpha^{\beta}
    2 \sqrt{W ( s ) }\dd{ s }
    \\
    & \leq
    ( \omega_1 - \omega_0 ) 
    \int_{\alpha_-}^\beta 
    2\sqrt{W(s)}\dd{ s }.
\end{align*}
The right hand side gets arbitrarily small if $ \omega_1 - \omega_0 $ is small, and since $ m^\ast < \alpha_- $ is fixed, the desired claim follows.
\end{proof}
Since we have just shown that in the case $ \alpha_\eps \geq \alpha_-$ we do not transition down to the well $ a $, a minimizer $ v_\eps $ has to reach the well $ b $ in a time of scale $ \eps $, which is what we now show.
\begin{lemma}
    \label{lem:properties_of_1d_minimizers}
    There exists a sequence $ \theta_\eps \uparrow b $ with the following properties. 
    Assume that $ \alpha_\eps , \beta_\eps \in [\alpha_-, b ] $  for all $ \eps > 0 $ and that $ \dw ( \alpha_\eps, \alpha ), \dw ( \beta_\eps , b ) \in o ( \eps ) $. We have that $ \dw ( \theta_\eps , b ) \in o ( \eps )  $ and if $ v_\eps \in \arg \min G_\eps^{(0)} $ then there exists a unique smallest $ S_\eps \geq 0 $ such that $ v_\eps ( S_\eps ) = \theta_\eps $ and such that $ v_\eps $ is increasing on $ [0, S_\eps ] $. Moreover we have
    \begin{equation}
    \label{eq:Teps_asymptotics}
        \lim_{ \eps \to 0 }
        \frac{S_\eps}{\eps } = T^{ (\alpha) },
    \end{equation}
    where $ T^{(\alpha ) } $ is as in \Cref{prop:cauchy_problem}.
    If we quantify the estimates, then we get that for $ \eps>0 $ sufficiently small, there exists $M>0$ such that 
    \begin{equation}
        \label{eq:Seps_upper_bound}
        \frac{S_\eps}{\eps}
        \leq
        M,
    \end{equation}
    where $ M $ only depends on $ W $ and the convergence rate of $ \beta_\eps $ to $ b $, and there exists a constant $ C > 0 $ such that $ \theta_\eps $ satisfies the bound
    \begin{equation}
        \label{eq:thetaeps_bound}
        \theta_\eps \geq b - C \max \{ b - \beta_\eps , \eps^{1/(1+q)} \}.
    \end{equation}
\end{lemma}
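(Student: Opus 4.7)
The plan is to define $\theta_\eps := b - C_0 \max\{b - \beta_\eps, \eps^{1/(1+q)}\}$ for a constant $C_0$ to be chosen sufficiently large (depending on $W$, $\omega$ and the energy bound, but not on $\eps$). That $\theta_\eps \to b$ is immediate from $\beta_\eps \to b$; for $\dw(\theta_\eps, b) \in o(\eps)$, \Cref{p:prop W} gives $\dw(\theta_\eps, b) \leq \sigma^{-1}(b - \theta_\eps)^{(3+q)/2}$, so using $q < 1$ (whence $(3+q)/(2(1+q)) > 1$) and the hypothesis $\dw(\beta_\eps, b) \in o(\eps)$ (which via \Cref{p:prop W} forces $(b-\beta_\eps)^{(3+q)/2} \in o(\eps)$), both contributions are $o(\eps)$. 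Since $\theta_\eps \leq \beta_\eps$, $v_\eps$ reaches $\theta_\eps$ by the intermediate value theorem, and I will define $S_\eps$ as the first hitting time.

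The heart of the argument is the uniform bound $\|\delta_\eps\|_\infty \leq C\eps$ on the discrepancy of \Cref{prop:euler_lagrange}. Differentiating $\delta_\eps = \eps^2(v_\eps')^2 - W(v_\eps)$ and using \eqref{e:E-L eq} yields $\delta_\eps'(t) = -2\eps^2(v_\eps')^2 \omega'/\omega$, which combined with the energy bound \eqref{eq:energy_bound} (whence $\int \eps^2(v_\eps')^2\omega\,dt = O(\eps)$) gives the oscillation estimate $|\delta_\eps(t) - \delta_\eps(t_0)| \leq C_1 \eps$ for all $t, t_0 \in [0,T]$. To anchor $\delta_\eps$ at some point, I will use the identity $\int_0^T \delta_\eps\omega\,dt = G_\eps^{(0)}(v_\eps) - 2\int_0^T W(v_\eps)\omega\,dt$, a difference of two $O(\eps)$ quantities; the integral mean value theorem then supplies $t_0 \in [0,T]$ with $|\delta_\eps(t_0)| = O(\eps)$, and combining with the oscillation bound yields $\|\delta_\eps\|_\infty \leq C\eps$.

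With this uniform bound in hand, monotonicity and asymptotics follow. At any critical point $\bar t$ of $v_\eps$, the Euler--Lagrange equation forces $W(v_\eps(\bar t)) = -\delta_\eps(\bar t) \leq C\eps$, so by the subquadratic growth from \Cref{p:prop W} together with \Cref{prop:lower_bound} (which rules out values near $a$), $v_\eps(\bar t) \geq b - (C/\sigma)^{1/(1+q)}\eps^{1/(1+q)}$. Choosing $C_0 > (C/\sigma)^{1/(1+q)}$ ensures no critical point of $v_\eps$ lies in $\{v_\eps \leq \theta_\eps\}$, and since $\eps v_\eps'(0) \to \sqrt{W(\alpha)} > 0$ by \Cref{lem:conv_of_rescaled}, $v_\eps$ is strictly increasing on $[0, S_\eps]$. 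For the convergence $S_\eps/\eps \to T^{(\alpha)}$, I compare with $w_\eps \to z_\alpha$ in $\w^{1,\infty}_{\textup{loc}}$: for $\tau < T^{(\alpha)}$, $z_\alpha(\tau) < b$ so $w_\eps(\tau) < \theta_\eps$ for small $\eps$, forcing $S_\eps/\eps > \tau$; for $\tau > T^{(\alpha)}$, $z_\alpha(\tau) = b$ gives $w_\eps(\tau) > \theta_\eps$ and $S_\eps/\eps \leq \tau$. Finally, the strict monotonicity provides $S_\eps = \int_{\alpha_\eps}^{\theta_\eps} \eps/\sqrt{W(u) + \delta_\eps}\,du$; taking $C_0$ large enough that $\sigma(b-u)^{1+q}/2 \geq C\eps$ on $[\alpha_\eps, \theta_\eps]$ gives $W + \delta_\eps \geq \sigma(b-u)^{1+q}/2$ near $b$ and a positive constant lower bound otherwise, so $S_\eps/\eps$ is controlled by $\int_{\alpha_\eps}^b W(s)^{-1/2}\,ds$, which is finite by \Cref{p:prop W}.

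The main obstacle is the pointwise bound $\|\delta_\eps\|_\infty = O(\eps)$: a direct attempt via $\delta_\eps(\eps t) = (w_\eps'(t))^2 - W(w_\eps(t))$ and \Cref{lem:conv_of_rescaled} only yields $\|\delta_\eps\|_\infty = o(1)$ without a quantitative rate, since that lemma is proved through an Arzel\`a--Ascoli compactness argument. The integral mean value trick sidesteps this by exploiting the variational character of $v_\eps$ through the energy bound, rather than relying on any convergence rate.
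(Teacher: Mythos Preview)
Your approach is essentially correct and takes a different, cleaner route than the paper. The paper anchors $\delta_\eps$ by evaluating the Euler--Lagrange identity at $t=T$ to get only the one-sided bound $\delta_\eps(T)\ge -W(\beta_\eps)$, then propagates via Lipschitz continuity to obtain merely the weaker decay $\min\{\delta_\eps,0\}/\eps^{2(1+q)/(3+q)}\to 0$. Your integral mean value trick yields the sharper two-sided estimate $\|\delta_\eps\|_\infty\le C\eps$, and this streamlines everything downstream: monotonicity comes directly from ``no critical point lies in $\{v_\eps\le\theta_\eps\}$'' rather than through the paper's contradiction argument via a localized functional on $[t_*,T]$, and the quantified bound \eqref{eq:Seps_upper_bound} follows from the explicit formula $S_\eps/\eps=\int_{\alpha_\eps}^{\theta_\eps}(W(u)+\delta_\eps)^{-1/2}\,du$ instead of the paper's comparison principle with an auxiliary ODE solution.

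One step does not work as written. In your compactness argument for $\limsup_{\eps\to 0} S_\eps/\eps\le T^{(\alpha)}$ you assert that for $\tau>T^{(\alpha)}$, $z_\alpha(\tau)=b$ forces $w_\eps(\tau)>\theta_\eps$; but both sides converge to $b$ and \Cref{lem:conv_of_rescaled} supplies no rate on $b-w_\eps(\tau)$, so this comparison is unjustified. The repair is already in your hands: split $S_\eps/\eps=\int_{\alpha_\eps}^{\theta_\eps}(W(u)+\delta_\eps)^{-1/2}\,du$ at $b-\eta$; on $[\alpha_\eps,b-\eta]$ the integrand converges uniformly to $W^{-1/2}$ since $W\ge c_\eta>0$ there, while on $[b-\eta,\theta_\eps]$ your bound $W+\delta_\eps\ge W/2$ controls the tail by $\sqrt{2}\int_{b-\eta}^b W^{-1/2}$, which vanishes as $\eta\to 0$. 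You should also dispose of the degenerate case $\alpha=b$ separately (the paper takes $\theta_\eps=\alpha_\eps$ and $S_\eps=0$), since your appeal to $\sqrt{W(\alpha)}>0$ fails there.
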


\begin{proof}
    Without loss of generality, we may assume that $ \alpha < b $, since otherwise we can simply choose $ \theta_\eps = \alpha_\eps $ and immediately get the desired result.
    For $ C > 0 $, let $ \theta_\eps $ be defined by
    \begin{equation*}
        \theta_\eps \coloneqq b - C \max \{ b - \beta_\eps , \eps^{1/(1+q)} \}.
    \end{equation*}
    Let $ C' > 0 $ be chosen below. Then for $ C > 0 $ sufficiently large, we note that due to the growth bounds of $ W $ near $ b $, see \Cref{p:prop W}, we have that
    \begin{equation*}
        C' ( W ( \beta_\eps ) + \eps ) 
        \leq
        \frac{\sigma C^{1+q}}{2} ( W ( \beta_\eps ) + \eps )
        \leq
        W ( \theta_\eps )
        \quad
        \text{and}
        \quad
        \dw( \theta_\eps , b ) \in o( \eps ).
    \end{equation*} 
    
    Suppose now an $S_\eps $ as stated in the claim does not exist. Then we define
    \begin{equation*}
        t_\ast \coloneqq \arg \min \left\{ t \in [0, T] \colon v_\eps ' ( t ) = 0 \right\}.
    \end{equation*}
    By assumption we have $ t_\ast < T $, $ x_\ast \coloneqq v_\eps  ( t_\ast ) < \theta_\eps $ and $ v_\eps ' ( t_\ast ) = 0 $. Since we can piece together minimizers on disjoint intervals, $ v_\eps $ must also be a minimizer of the localized functional
    \begin{equation*}
        G_\eps^{[t_\ast, T]} ( v )
        \coloneqq 
        \int_{t_\ast}^T \left( \eps W ( v ) + \frac{1}{\eps } ( v'( t ) )^2 \right) \omega ( t )\dd{ t }
    \end{equation*}
    subject to the boundary conditions $ v ( t_\ast ) = v_\eps ( t_ \ast ) $ and $ v ( T ) = \beta_\eps $. By \Cref{prop:euler_lagrange}, we have that on $ [t_\ast , T ] $, the function $ v_\eps $ satisfies the Euler--Lagrange equation
    \begin{equation}
    \label{eq:el_on_tast_T}
        (v_\eps' ( t) )^2
        =
        \frac{ W ( v_\eps ( t ) ) + \delta_{\eps }^\ast ( t ) }{ \varepsilon^2}
    \end{equation}
    for 
    \begin{align*}
        \delta_{\eps }^\ast( t ) 
        &=
        \frac{ 1 }{\omega ( t ) }
        \left( 
        - \omega ( t_\ast ) W ( x_\ast ) 
        - \varepsilon \int_{t_\ast }^{ t }  \left( \frac{ 1}{ \varepsilon } W (v_\eps ( t ) ) + \varepsilon (v_\eps ( t ) )^2 \right) \omega' ( t ) \dd{ t }
        \right)
        \\
        & \eqqcolon
        \frac{1}{\omega ( t ) } 
        \left( 
        - \omega ( t_\ast ) W ( x_\ast ) - \varepsilon \Phi_{\eps}^\ast ( t ) \right).
    \end{align*}
    We note that by the energy bound in \Cref{prop:minimizer_properties}, the function $ \Phi_{\eps }^\ast $ stays uniformly bounded in the supremum norm. Plugging $ t = T $ into equation (\ref{eq:el_on_tast_T}) yields due to the choice of $ \theta_\eps $ that for $ C'> 0 $ sufficiently large, we have
    \begin{equation}
    \label{eq:contradiction_inequality}
        W (x_\ast ) \leq \frac{ 1 }{\omega ( t_\ast ) } \left( \omega( T ) W ( \beta_\eps ) - \eps \Phi_{\eps }^\ast ( T ) \right)
        \leq 
        C'( W ( \beta_\eps ) + \varepsilon )
        \leq 
        W ( \theta_\eps ).
    \end{equation}
    But now we have (for $ \eps>0 $ sufficiently small) a contradiction to $ a < x_\ast < \theta_\eps $ and \Cref{prop:lower_bound}, which proves our first claim.

    We now want to show the convergence claimed in equation (\ref{eq:Teps_asymptotics}). First we want to show the lower bound
    \begin{equation*}
        \liminf_{\eps \to 0 }
        \frac{ S_\eps }{ \eps }
        \geq
        T^{(\alpha)}.
    \end{equation*}
    If the left-hand side is infinity, we are done, so assume otherwise. We take a non-relabelled subsequence  such that the limit inferior is realized and such that there exists $ \Bar{T}$ so that for this subsequence, $ S_\eps / \eps \to \Bar{T}$. By the uniform convergence on compact intervals shown in \Cref{lem:conv_of_rescaled}, we must then have
    \begin{equation*}
        v_\eps ( S_\eps ) - z_\alpha ( S_\eps / \eps ) \to 0.
    \end{equation*}
    But $ v_\eps ( S_\eps ) = \beta_\eps \to b $ and $ z_\alpha ( S_\eps / \eps ) \to z_\alpha ( \Bar{T} ) $, from which we deduce that $ z_\alpha ( \Bar{ T } ) = b $. By definition of $ z_\alpha $, this yields $ \Bar{ T } \geq T^{(\alpha)} $, which is what we wanted to show.

    We will now show the reverse inequality
    \begin{equation}
    \label{eq:limsup_inequality_Teps}
        \limsup_{\eps \to 0 }
        \frac{ S_\eps }{ \eps }
        \leq
        T_\alpha.
    \end{equation}
    We do this by first showing that for $ \delta_\eps ( t ) $ as in \Cref{prop:euler_lagrange}, it holds that
    \begin{equation}
    \label{eq:delta_eps_decay}
        \frac{\min \{\delta_\eps ( t ), 0 \} }{ \eps^{2(1+q)/(3+q)}}
        \to 0
    \end{equation}
    uniformly on $[0,T]$.
    To see this, we plug $ t = T $ into the Euler--Lagrange equation (\ref{eq:euler_lagrange_with_delta}) to obtain
    \begin{equation}
        \label{eq:delta_eps_lower_bound_T}
        \delta_\eps ( T ) \geq - W ( \beta_\eps ).
    \end{equation}
    Since $ \dw (\beta_\eps, b ) \in o( \eps )$, it follows by \Cref{p:prop W} that 
    \begin{equation}
    \label{eq:W_beta_eps_decay}
    \frac{W( \beta_\eps ) }{ \eps^{2 ( 1+q)/(3+q)} }
    \to 0.
    \end{equation}
    Moreover we can write $ \delta_\eps $ as 
    \begin{equation*}
        \delta_\eps ( t ) = \frac{ 1 }{ \omega ( t ) } \left( C_\eps + \eps \Phi_\eps ( t ) \right)
    \end{equation*}
    for a constant $ C_\eps \in \R $ and a function $ \Phi_\eps $ which is bounded in $ \mathrm{L}^\infty ([0,T]) $ uniformly as $ \eps \to 0 $.
    This shows that $ \delta_\eps $ is Lipschitz continuous with constant less or equal to $ C \eps $.
    Thus inequality (\ref{eq:delta_eps_lower_bound_T}) combined with (\ref{eq:W_beta_eps_decay}) already implies the desired claim (\ref{eq:delta_eps_decay}) since $ 2 (1+q)/(3+q) <1$.

    To deduced the desired inequality (\ref{eq:limsup_inequality_Teps}), we use a comparison principle to the unweighted case, in which the claim follows easily via the explicit representation of solutions to the Cauchy problem.
    Define the lower bound $ \tilde{\delta}_\eps \coloneqq \min \{ \min_{[0,T] } \delta_\eps (t), 0 \} $ and define $ \tilde{\beta}_\eps \coloneqq W^{-1} ( -\tilde{\delta}_\eps ) $, where the inverse is taken on the interval $ [b - \mu, b ]$ for a sufficiently small $ \mu > 0 $ as in \Cref{p:prop W}. Moreover we define  the function
    \begin{equation}
    \label{eq:varphi_eps_def}
        \varphi_\eps ( x ) 
        \coloneqq
        \int_{ \alpha_\eps }^{ x }
        \frac{ \eps }{ \sqrt{ W ( \rho ) + \tilde{\delta}_\eps } }
        \dd{\rho }
    \end{equation}
    for $ x \leq \tilde{T}_\eps \coloneqq \varphi_\eps ( \tilde{\beta}_\eps )$. 
    Note that by definition of $ \tilde{\beta}_\eps$,  the function $ \varphi_\eps $ is well-defined on $ [\alpha_\eps , \tilde{\beta}_\eps ] $, see also inequality (\ref{eq:Winv_integral_ex}). Denote its inverse by $ u_\eps $, which is a solution to
    \begin{equation*}
    \begin{cases}
        u_\eps ' ( t ) = \frac{ \sqrt{ W ( u_\eps ) + \tilde{\delta}_\eps } }{ \eps }, & \textup{for } t \in [0, \tilde{T}_\eps],
        \\
        u_\eps ( 0 ) = \alpha_\eps,
        \\
        u_\eps ( \tilde{T}_\eps ) = \tilde{\beta}_\eps.
    \end{cases}
    \end{equation*}
    Using the Euler--Lagrange equation (\ref{eq:euler_lagrange_with_delta}), the definition of $ \tilde{\delta}_\eps $ and a comparison principle, it follows that $ v_\eps ( t ) \geq u_\eps ( t ) $ for all $ t \in [0, \tilde{T }_\eps ]$. 
    By the $ 1/(1+q)$-Hölder continuity of $ W^{-1}$ at $ b $ and \Cref{p:prop W} we have
    \begin{equation*}
        \frac{\dw ( \tilde{\beta_\eps}, b ) }{ \eps }
        \lesssim 
        \frac{(b- \tilde{\beta}_\eps )^{(3+q)/2}}{\eps}
        \lesssim 
        \frac{(-\tilde{\delta}_\eps)^{(3+q)/(2(1+q))}}{\eps}
        \to 0.
    \end{equation*}
    Thus by redefining $ \tilde{\theta}_\eps \coloneqq \min\{ \tilde{\beta}_\eps, \theta_\eps \} $, we may without loss of generality assume $ \theta_\eps \leq \tilde{ \beta }_\eps $ while keeping the properties of $ \theta_\eps $. Combined with $ v_\eps \geq u_\eps $, this yields
    \begin{equation*}
        v_\eps ( \tilde{T}_\eps ) 
        \geq 
        u_\eps ( \tilde{ T}_\eps ) 
        = 
        \tilde{\beta}_\eps 
        \geq 
        \theta_\eps.
    \end{equation*}
    Therefore by definition of $ S_\eps$, we have $ S_\eps \leq \tilde{T}_\eps $. We lastly check the asymptotics of $ \tilde{T}_\eps$. We claim that as $ \eps \to 0 $, we have 
    \begin{equation*}
        \frac{\tilde{T}_\eps}{\eps}
        =
        \int_{\alpha_\eps}^{\tilde{\beta}_\eps}
        \frac{ 1 }{ \sqrt{ W ( \rho ) + \tilde{\delta}_\eps }} 
        \dd{ \rho }
        \to 
        \int_{ \alpha}^{b} \frac{ 1 }{ \sqrt{ W ( \rho ) } } \dd{ \rho }
        =
        T^{(\alpha)}.
    \end{equation*}
    This follows by the generalized dominated convergence theorem. In fact, we note by \Cref{p:prop W} and equation (\ref{e:limite subquadratico}) that $ W^{-1}$ is $ 1/(1+q)$-Hölder continuous near $ b $. Thus for $ \mu > 0 $ sufficiently small we have that
    \begin{equation}
    \label{eq:Winv_integral_ex}
        \int_{ b - \mu }^{ \tilde{\beta}_\eps }
        \frac{1}{\sqrt{ W ( \rho ) + \tilde{\delta}_\eps } }
        \dd{ \rho }
        =
        \int_{ b - \mu }^{ \tilde{\beta}_\eps }
        \frac{1}{\sqrt{ W ( \rho ) - W ( \tilde{\beta}_\eps) } }
        \dd{ \rho }
        \lesssim
        \int_{ b - \mu }^{ \tilde{\beta}_\eps }
        (\tilde{\beta}_\eps - \rho)^{ - \frac{1+q}{2 } }
        \dd{ \rho }
        \leq
        \int_{0}^{ \mu }
        t^{-\frac{1+q}{2  } }
        \dd{ t },
    \end{equation}
    which is finite because $ q \in (0,1).$ 
    Thus we conclude
    \begin{equation*}
        \limsup_{ \eps \to 0 }
        \frac{ S_\eps }{\eps}
        \leq
        \limsup_{ \eps \to 0 }
        \frac{ \tilde{T}_\eps}{\eps }
        = 
        T^{(\alpha)}
    \end{equation*}
    For the upper bound (\ref{eq:Seps_upper_bound}) we first note that $ \delta_\eps ( t ) \geq - W( \beta_\eps ) - C \eps $ by the previous estimates. Combined with $ S_\eps \leq \tilde{T}_\eps $ and inequality (\ref{eq:Winv_integral_ex}), this yields the desired upper bound depending on the convergence speed of $ \beta_\eps $ to $ b $.
\end{proof} 
It remains to analyze the behaviour in the case $ \alpha_\eps \in [a, \alpha_-] $ as well, where we assume that $ \omega' \geq - \kappa_0 /2$. We first prove an analogue of \Cref{prop:lower_bound}. Due to the bound on the derivative of the weight, a transition up to $ \beta_\eps $ at the interval bound $ T $ would cost more energy then a transition at time zero. Thus a minimizer $ v_\eps \in \argmin G_\eps^{(0)} $ should immediately make a transition up to an area close to $ b $. This is the content of the following two results.
\begin{lemma}
\label{lem:bound_min_case2}
    If the weight $ \omega $ satisfies inequality (\ref{eq:weight_lower_bound}), then there exists some constant $ N > 0 $ only depending on $ W $, $ \norm{\omega}_{C^2} $, $ \kappa_0 $ and the convergence rate of $ \beta_\eps \to b $ such that for $ \eps > 0 $ sufficiently small, every $ v_\eps \in \argmin G_\eps^{(0)} $ satisfies
    \begin{equation*}
        \dw ( \alpha_\eps , \min v_\eps ) \leq N \eps.
    \end{equation*}
\end{lemma}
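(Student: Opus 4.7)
The plan is to combine a Young-inequality lower bound on $G_\eps^{(0)}(v_\eps)$ with a \emph{refined} version of the upper bound in \Cref{prop:minimizer_properties}, exploiting the fact that under (\ref{eq:weight_lower_bound}) the weight is monotonically increasing, so $\omega_0 = \omega(0)$. Set $m_\eps \coloneqq \min_{[0,T]} v_\eps = v_\eps(t_\eps)$; if $m_\eps \geq \alpha_\eps$ there is nothing to prove, so assume $m_\eps < \alpha_\eps$.

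\smallskip
\textbf{Step 1 (Lower bound by total $\dw$-variation).} Since $v_\eps$ goes from $\alpha_\eps$ down to $m_\eps$ on $[0,t_\eps]$ and then from $m_\eps$ up to $\beta_\eps$ on $[t_\eps,T]$, its total $\dw$-variation is at least $\dw(m_\eps,\alpha_\eps)+\dw(m_\eps,\beta_\eps) = 2\dw(m_\eps,\alpha_\eps)+\dw(\alpha_\eps,\beta_\eps)$. Applying $W(v)+\eps^2(v')^2 \geq 2\eps\sqrt{W(v)}\abs{v'}$ pointwise and bounding $\omega \geq \omega_0 = \omega(0)$ (using (\ref{eq:weight_lower_bound})) yields
\begin{equation*}
    G_\eps^{(0)}(v_\eps) \;\geq\; \eps\,\omega(0)\bigl[\,2\,\dw(\alpha_\eps,m_\eps) + \dw(\alpha_\eps,\beta_\eps)\,\bigr].
\end{equation*}

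\smallskip
\textbf{Step 2 (Refined upper bound).} We re-examine the competitor $\bar{v}_\eps$ defined in (\ref{eq:def_recov_seq}). On $[0,T_\eps]$ Young's inequality holds with equality, so after the substitution $s = t/\eps$ and a first-order expansion of $\omega$ around $t=0$,
\begin{equation*}
    \int_0^{T_\eps}\!\! (W(\bar v_\eps)+\eps^2 (\bar v_\eps')^2)\omega\,\dd t \;=\; \eps \omega(0)\dw(\alpha_\eps,b) + 2\eps^2 \int_0^{T^{(\alpha_\eps)}}\!\! W(z_{\alpha_\eps}(s))\,s\,\omega'(\xi_s)\,\dd s,
\end{equation*}
and the last term is $O(\eps^2)$ because $T^{(\alpha_\eps)} \leq T^{(a)} < \infty$ by \Cref{p:prop W}. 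The analogous estimate near $t=T$ gives $\eps\omega(T)\dw(\beta_\eps,b)+O(\eps^2)$. Thus
\begin{equation*}
    \min G_\eps^{(0)} \;\leq\; \eps\,\omega(0)\,\dw(\alpha_\eps,b) + \eps\,\omega(T)\,\dw(\beta_\eps,b) + C\eps^2.
\end{equation*}

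\smallskip
\textbf{Step 3 (Combination).} Dividing the chain of inequalities by $\eps$ and using the additivity $\dw(\alpha_\eps,b) = \dw(\alpha_\eps,\beta_\eps) + \dw(\beta_\eps,b)$ to cancel the term $\omega(0)\dw(\alpha_\eps,\beta_\eps)$ on both sides, one obtains
\begin{equation*}
    2\,\omega(0)\,\dw(\alpha_\eps,m_\eps) \;\leq\; \bigl[\omega(0)+\omega(T)\bigr]\,\dw(\beta_\eps,b) + C\eps.
\end{equation*}
Since the convergence rate of $\beta_\eps \to b$ is part of the hypotheses, we may assume $\dw(\beta_\eps,b) \leq M\eps$ for some constant $M>0$ depending on that rate, which yields $\dw(\alpha_\eps,m_\eps) \leq N\eps$ with $N$ depending only on $W$, $\norm{\omega}_{C^2}$, $\kappa_0$ and $M$.

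\smallskip
\textbf{Main difficulty.} The crude upper bound from \Cref{prop:minimizer_properties}, namely $\min G_\eps^{(0)} \leq \eps \omega_1[\dw(\alpha_\eps,b)+\dw(\beta_\eps,b)]$, is too lossy: it would only control $(\omega_1-\omega_0)\dw(\alpha_\eps,b)$, which is $O(1)$ in general, not $O(\eps)$. The crux is therefore Step~2, where one must carry the weight $\omega(0)$ (rather than $\omega_1$) through the transition-profile part of the construction and localize the transition at $t=0$, so that under $\omega_0=\omega(0)$ the troublesome $\dw(\alpha_\eps,b)$-contribution cancels exactly against the lower bound, leaving only $\dw(\beta_\eps,b)$ on the right-hand side.
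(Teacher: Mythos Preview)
Your proposal is correct and follows essentially the same approach as the paper: both combine the Young-inequality lower bound (using $\omega\geq\omega(0)$ from (\ref{eq:weight_lower_bound})) with a refined upper bound in which the weight on the transition interval $[0,T_\eps]$ is expanded as $\omega(0)+O(\eps)$, so that the $\omega(0)\dw(\alpha_\eps,b)$ contributions cancel and only $O(\eps)+O(\dw(\beta_\eps,b))$ survives. The only cosmetic differences are that the paper keeps the extra nonnegative term $-\tfrac{\kappa_0}{2}t_\eps\,\dw(\alpha_\eps,\beta_\eps)$ in the lower bound (which it then drops anyway) and writes the refined upper bound as $(\omega(0)-\tfrac{\kappa_0}{2}C\eps)\dw(\alpha_\eps,b)$ rather than via your Taylor remainder; your identification of the ``main difficulty'' is exactly the point of the argument.
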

\begin{proof}
    Let $ t_\eps \in [0,T]$ be such that $ v_\eps ( t_\eps ) = m_\eps \coloneqq \min_{[0,T]} v_\eps $. As in the proof of \Cref{prop:lower_bound} and using inequality (\ref{eq:weight_lower_bound}), we then deduce that  
    \begin{align}
        \notag
        \frac{G_\eps^{(0)} ( v_\eps ) }{\eps}
        & \geq
        \left(\min_{[0,t_\eps]} \omega\right) \dw ( \alpha_\eps, m_\eps ) 
        +
        \left(\min_{[t_\eps, T ]} \omega\right) \dw ( m_\eps, \beta_\eps )
        \\
        \notag
        & \geq
        \omega ( 0 ) \dw ( \alpha_\eps, m_\eps )
        +
        \left(\omega (0 ) - \frac{\kappa_0}{2} t_\eps \right) \dw ( m_\eps , \beta_\eps )
        \\
        \label{eq:energy_of_min}
        & \geq
        2 \omega (0 ) \dw ( \alpha_\eps , m_\eps ) 
        +
        \left( \omega ( 0 )- \frac{\kappa_0}{2} t_\eps \right) \dw ( \alpha_\eps , \beta_\eps ).
    \end{align}
    We compare this energy with the optimal transition profile constructed in the proof \Cref{prop:minimizer_properties}, which has at most energy
    \begin{align}
    \label{eq:energy_upper_bound}
    &
        \left( \omega ( 0 ) - \frac{\kappa_0}{2 } C\eps \right) \dw ( \alpha_\eps, b ) 
        +
        \left(\max_{[0,T]} \omega\right) \dw ( \beta_\eps , b ) 
        \\
        ={} &
        \left( \omega ( 0 ) - \frac{\kappa_0}{2} C \eps \right) 
        \left( \dw ( \alpha_\eps , \beta_\eps ) + \dw ( \beta_\eps, b ) \right) 
        +
        \left(\max_{[0,T]} \omega \right) \dw ( \beta_\eps , b ).
    \end{align}
    Since $ v_\eps $ is a minimizer, it thus follows that we must have that the term (\ref{eq:energy_of_min}) is less or equal than (\ref{eq:energy_upper_bound}), from which we deduce that
    \begin{align*}
        2 \omega ( 0 ) \dw ( \alpha_\eps, m_\eps)
        & \leq
        - \frac{\kappa_0}{2} C\eps \dw ( \alpha_\eps , \beta_\eps ) 
        +
        \left( \omega(0) - \frac{\kappa_0}{2 }C \eps + \max \omega \right) \dw ( \beta_\eps, b ).
    \end{align*}
    Since $ \dw (\beta_\eps ,b ) \in o(\eps ) $, the right-hand side is, for $ \eps > 0 $ sufficiently small and dependent on the convergence rate of $ \beta_\eps $ to $ b $, bounded by $ N \eps/(2 \omega ( 0 ))$ for some sufficiently large $ N > 0 $, which proves the claim.
\end{proof}
\begin{lemma}
\label{lem:properties_1d_mins_close_to_a}
    There exists a sequence $ \theta_\eps \uparrow b $ with the following properties.
    Assume that $ \omega' ( 0 ) \geq - \kappa_0 $ and $ \dw ( \alpha_\eps , \alpha ) , \dw ( \beta_\eps, b ) \in o ( \eps ) $. We have that $ \dw( \theta_\eps , b ) \in o ( \eps ) $ and if $ v_\eps \in \argmin G_\eps^{(0)} $, then there exists $ S_\eps \geq 0 $ such that $ v_\eps ( S_\eps ) = \theta_\eps $ and there exists $ M > 0 $ such that for $ \eps > 0 $ sufficiently small, 
    \begin{equation}
    \label{eq:Seps_upper_bound_close_a}
        \frac{S_\eps}{\eps }
        \leq 
        M,
    \end{equation}
    where $ M $ only depends on $ \alpha_-$, $ \kappa_0 $, $ W $ and the convergence rate of $ \beta_\eps $ to $ b $. Moreover there exists a constant $ C > 0 $ such that for $ \eps > 0 $ sufficiently small, $ \theta_\eps $ satisfies the bound
    \begin{equation}
    \label{eq:thetaeps_bound_close_a}
        \theta_\eps 
        \geq
        b 
        - 
        C \max \{ b - \beta_\eps , \eps^{1/(1+q)} \}.
    \end{equation}
\end{lemma}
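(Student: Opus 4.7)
My plan is to define $\theta_\eps \coloneqq b - C \max\{b - \beta_\eps, \eps^{1/(1+q)}\}$ for a sufficiently large constant $C > 0$, exactly as in the proof of \Cref{lem:properties_of_1d_minimizers}. The asymptotic $\dw(\theta_\eps, b) \in o(\eps)$ and the bound (\ref{eq:thetaeps_bound_close_a}) then follow directly from the growth of $W$ near $b$ recorded in \Cref{p:prop W}. For the existence of $S_\eps$, I apply the intermediate value theorem: for $\eps$ small we have $\alpha_\eps < \theta_\eps < \beta_\eps$ (the first inequality uses $\alpha_\eps \to \alpha \leq b$ together with $\theta_\eps \to b$, the edge case $\alpha = b$ being trivial; the second uses $C > 1$), and so a smallest $S_\eps \in (0, T)$ with $v_\eps(S_\eps) = \theta_\eps$ exists.

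For the bound $S_\eps/\eps \leq M$, I adapt the comparison argument from the end of the proof of \Cref{lem:properties_of_1d_minimizers}. As there, one first shows that the Lagrange-multiplier-like term from \Cref{prop:euler_lagrange} satisfies $\delta_\eps(t) \geq -W(\beta_\eps) - C_1 \eps \geq -C_2 \eps$ uniformly on $[0, T]$, by plugging $t = T$ into the Euler--Lagrange equation (\ref{eq:euler_lagrange_with_delta}) and using the $C\eps$-Lipschitz continuity of $\delta_\eps$, so that $\tilde\delta_\eps \coloneqq \min\{\inf \delta_\eps, 0\} \in [-C_2 \eps, 0]$. I then decompose $[0, S_\eps]$ into its monotone segments of $v_\eps$; since any critical value of $v_\eps$ satisfies $W(v_\eps) = -\delta_\eps \leq C_2 \eps$ by (\ref{eq:euler_lagrange_with_delta}), all interior endpoints of these segments lie in an $O(\eps^{1/(1+q)})$-neighbourhood of $\{a, b\}$. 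The energy bound $G^{(0)}_\eps(v_\eps) \lesssim \eps$ from \Cref{prop:minimizer_properties}, together with the $\dw$-control $\dw(\alpha_\eps, m_\eps) \leq N\eps$ from \Cref{lem:bound_min_case2}, caps the number of such monotone segments at $O(1)$ (each full $a\leftrightarrow b$ traversal costs at least $c\eps$ of energy, while any low-energy oscillation near $a$ already has small range and thus contributes little time). On each monotone segment, the comparison with the unweighted Cauchy solution $u_\eps' = \pm\sqrt{W(u_\eps) + \tilde\delta_\eps}/\eps$ yields a segment-duration bounded by $\eps \int(W(s) + \tilde\delta_\eps)^{-1/2}\, ds = O(\eps)$ via the subquadratic integrability of $W^{-1/2}$, analogous to (\ref{eq:Winv_integral_ex}). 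Summing gives $S_\eps \leq M \eps$ for some $M$ depending on $W$, $\alpha_-$, $\kappa_0$ and the convergence rate of $\beta_\eps$ to $b$ through $C_2$.

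The main obstacle compared to \Cref{lem:properties_of_1d_minimizers} is the loss of monotonicity of $v_\eps$ on $[0, S_\eps]$: the earlier monotonicity argument there relied crucially on \Cref{prop:lower_bound} to confine $v_\eps \geq m^\ast > a$, which is unavailable when $\alpha_\eps$ is allowed to approach $a$. The replacement is the multi-segment decomposition above together with \Cref{lem:bound_min_case2}, at the price of only producing the uniform upper bound $S_\eps / \eps \leq M$ rather than the asymptotic identification $S_\eps / \eps \to T^{(\alpha)}$ of the previous lemma — which, however, is all the statement requires.
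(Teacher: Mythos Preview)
Your definition of $\theta_\eps$, the IVT argument for the existence of a smallest $S_\eps$, and the lower bound $\delta_\eps(t)\ge -W(\beta_\eps)-C\eps$ are all fine and match the paper. The difference lies in how you control $S_\eps/\eps$, and here your multi-segment argument has a genuine gap.

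The problem is with the monotone pieces whose endpoints lie near $a$. At a critical point $t_\ast$ one has $W(v_\eps(t_\ast))=-\delta_\eps(t_\ast)$, so $W(v_\eps(t_\ast))+\tilde\delta_\eps\le 0$; hence on any segment starting or ending at such a point the comparison ODE $u_\eps'=\pm\sqrt{W(u_\eps)+\tilde\delta_\eps}/\eps$ is not even well-defined, and the integral $\int (W(s)+\tilde\delta_\eps)^{-1/2}\,ds$ you invoke has a non-integrable singularity at the wrong place (the analogue of \eqref{eq:Winv_integral_ex} works near $b$ only because there the endpoint is exactly $W^{-1}(-\tilde\delta_\eps)$, not an arbitrary critical value). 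Moreover, the energy bound only caps the number of \emph{full} $a\leftrightarrow b$ traversals; it says nothing about the number of low-amplitude oscillations near $a$, each of which costs arbitrarily little energy but may consume time of order $\eps$. Your parenthetical ``small range hence little time'' therefore does not bound the total time spent oscillating near $a$.

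The paper avoids this entirely by inserting one preliminary step: using the strict monotonicity $\omega'\ge -\kappa_0/2>0$ (via an energy comparison with the explicit profile) it first proves that there is a fixed $D>0$ with $\dw(v_\eps(D\eps),a)\ge 2N\eps$, where $N$ is the constant from \Cref{lem:bound_min_case2}. This gives $W(v_\eps(D\eps))\gtrsim \eps^{2(1+q)/(3+q)}\gg |\tilde\delta_\eps|$, so on the shifted interval $[D\eps,T]$ the comparison argument of \Cref{lem:properties_of_1d_minimizers} applies verbatim, with the single modification that the contradiction $W(x_\ast)\le W(\theta_\eps)$ is ruled out via $\dw(a,x_\ast)\ge N\eps$ (coming from \Cref{lem:bound_min_case2}) instead of via \Cref{prop:lower_bound}. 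One then sets $S_\eps=D\eps+\bar S_\eps$. This shift-then-monotone strategy is the missing ingredient in your sketch.
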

\begin{proof}
    The idea of the proof is to argue that we find a small time $ t_\eps $ in which we reach $ v_\eps ( t_\eps ) = m_\eps > a  $ and then apply \Cref{lem:properties_of_1d_minimizers} on the smaller interval $ [t_\eps , T] $.

    We first argue that there exists some constant $ D \geq 0 $ depending only on $ \kappa_0 $, $ W $ and the convergence rate of $ \beta_\eps $ to $ b $ such that for $ \eps > 0 $ sufficiently small, we have $ \dw( v_\eps ( D \eps ) , a ) \geq 2 N \eps  $ for the constant $ N $ as in \Cref{lem:bound_min_case2}. For this we estimate
    \begin{equation*}
        \frac{G_\eps^{(0)} ( v_\eps ) }{ \eps }
        \geq
        \left(\min_{[D \eps, T ]} \omega\right) \dw ( v_\eps ( D\eps ) , \beta_\eps ) 
        \geq
        \left( \omega ( 0 ) - \frac{\kappa_0}{2} D \eps \right)
        \left( \dw ( \alpha_\eps , \beta_\eps ) - \dw ( v_\eps ( D\eps ), \alpha_\eps ) \right).
    \end{equation*}
    Again using the energy bound from \Cref{prop:minimizer_properties} (as in (\ref{eq:energy_upper_bound})), we deduce that
    \begin{equation}
    \label{eq:distance_bound}
        \left( \omega ( 0 ) - \frac{\kappa_0}{2} D \eps \right) \dw ( v_\eps ( D\eps ) ,\alpha_\eps )
        \geq 
        - \frac{\kappa_0}{2} ( D-C ) \eps  \dw ( \alpha_\eps, \beta_\eps ) + C \dw ( \beta_\eps , b ).
    \end{equation}
    If $ \alpha_\eps \geq \alpha_-$, then we already know from \Cref{prop:lower_bound} that $ \dw ( v_\eps (D \eps ), a ) \geq N \eps $, so we may assume $ \alpha_\eps < \alpha_-$. We then note that if $ \eps> 0 $ is sufficiently small, we can due to inequality (\ref{eq:distance_bound}) choose $ D $ sufficiently large such that $\dw ( v_\eps ( D \eps ) , \alpha_\eps ) \geq 5 N \eps $. Using \Cref{lem:bound_min_case2}, we can use the triangle inequality and $ v_\eps \in [a,b]$ to deduce
    \begin{align*}
        2 \dw ( a , v_\eps ( D_\eps ) )
        & \geq
        \dw ( \alpha_\eps , v_\eps ( D_\eps ) ) - \dw ( \alpha_\eps , a ) + \dw ( a , v_\eps ( D \eps ) )
        \\
        & \geq 
        5 N \eps - \dw ( \alpha_\eps , \min v_\eps ) - \dw ( a , \min v_\eps ) + \dw ( a , \min v_\eps ) 
        \geq
        4 N \eps,
    \end{align*}
    which shows the desired claim.

    Next, we apply the strategy of \Cref{lem:properties_of_1d_minimizers} on the interval $ [D\eps , T ]$. To be precise we claim that the shifted function $ \bar{v}_\eps ( t )  \coloneqq v_\eps ( t +D \eps )  $ satisfies $ \bar{v}_\eps ( \bar{S}_\eps ) = \theta_\eps $ for $ \theta_\eps $ as claimed in the statement of \Cref{lem:properties_1d_mins_close_to_a} and $ \bar{S}_\eps $ as in the statement of \Cref{lem:properties_of_1d_minimizers}. We prove the claim by following the proof steps of \Cref{lem:properties_of_1d_minimizers}, for which not all assumptions are satisfied. To be precise the first difficulty arises when trying to deduce a contradiction from inequality (\ref{eq:contradiction_inequality}), which yielded $ W ( x_\ast ) \leq W ( \theta_\eps ) $.
    The problem is that we can not apply \Cref{prop:lower_bound} here. 
    However we know by \Cref{lem:bound_min_case2} and the first claim of this proof that
    \begin{equation*}
        \dw ( a , x_\ast ) \geq \dw ( a , \min \bar{v}_\eps ) \geq 
        \dw ( a , \bar{v}_\eps ( 0 ) ) - \dw ( \bar{v}_\eps ( 0 ) , \min \bar{v}_\eps )
        \geq
        \dw ( a, \bar{v}_\eps ( 0 ) ) - N\eps
        \geq N \eps.
    \end{equation*}
    From $ W ( x_\ast ) \leq W ( \theta ) $ and $ x_\ast < \theta_\eps $, we deduce due to the monotonicity of $ W $ at $ b $ that $ x_\ast $ has to be close to $ a $. Thus $ \dw ( a, x_\ast ) \geq N\eps $ combined with $ \dw ( \theta_\eps , b ) \in o ( \eps ) $ is already a contradiction to $ W ( x_\ast ) \leq W ( \theta ) $ for $ \eps > 0 $ sufficiently small.
    This proves that $ \bar{v}_\eps ( \bar{S}_\eps ) = \theta_\eps $ as in the proof of \Cref{lem:properties_of_1d_minimizers}. 
    Moreover $ \bar{S}_\eps $ satisfies the upper bound $ \bar{S}_\eps / \eps \leq M $ by using the same proof strategy as in \Cref{lem:properties_of_1d_minimizers}. In fact, we note by the first claim of this proof that 
    \begin{equation*} 
    W ( \bar{v}_\eps ( 0 ) ) \gtrsim ( a - \bar{v}_\eps ( 0 ))^{1+q} \gtrsim \dw ( a , \bar{v}_\eps (0) )^{2 (1+q)/(3+q) } \gtrsim \eps^{2 ( 1+q )/(3+q) }.
    \end{equation*}
    By inequality (\ref{eq:delta_eps_decay}), we have $ \delta_\eps \in o ( \eps ^{(3+q)/(2(1+q))} ) $, so that for $ \eps > 0 $ sufficiently small, we have $ W(\bar{v}_\eps ( 0 )) \geq \delta_\eps $, which is necessary in order to make the function in (\ref{eq:varphi_eps_def}) well-defined.

    We conclude the proof by defining $ S_\eps \coloneqq D\eps + S_\eps $, which satisfies $ v_\eps ( S_\eps ) = \bar{v}_\eps ( \bar{S}_\eps ) = \theta_\eps $ and 
    \begin{equation*}
        \frac{S_\eps }{\eps }
        =
        \frac{\bar{S}_\eps}{\eps }
        +
        D
        \leq M 
    \end{equation*}
    for $ M > 0 $ sufficiently large, only depending  on $ \alpha_-$, $ \kappa_0 $ , $ W $ and the convergence rate of $ \beta_\eps$ to $ b $.
\end{proof}
\begin{proposition}[$\liminf$-inequality]
\label{p:liminf 1d}
    Assume that the potential $ W $ is as in \Cref{sct:assumptions} and that the boundary data and weight satisfy (\ref{eq:bdry_assumptions_1d}) and $ \dw ( \alpha_\eps , \alpha), \dw ( \beta_\eps, b ) \in o (\eps) $. 
    If $ v_\eps $ is a sequence of minimizers of $ G_\eps^{(0)} $, then
    \begin{equation*}
    \liminf_{\eps \to 0 }
    G_\eps^{(2)} ( v_\eps ) 
    \geq 
    G^{(2)} ( b )
    =
    \omega'(0)
    \int_0^\infty 
    2 \sqrt{W ( z_\alpha ( s ) ) } z_\alpha ' ( s ) s \dd{s }
    .
    \end{equation*}
    Moreover we obtain that there exists a constant $ M' > 0 $ only dependent on $ W, \alpha_{-}, \norm{ \omega}_{ C^1 [0,T] } $, $ \kappa_0 $ and the speed of convergence $ \beta_\eps \to b $  such that for $ \eps > 0 $ sufficiently small, we have
    \begin{equation}
    \label{eq:liminf_1d_quantified}
        G_\eps^{(2)} ( v_\eps ) 
        \geq
        -M' \left ( 1 + \frac{ \dw ( \alpha_\eps , \alpha ) + \dw ( \beta_\eps , b ) }{\eps } \right).
    \end{equation}
\end{proposition}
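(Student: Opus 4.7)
The proof mirrors the structure of the recovery-sequence proof. Using $\omega \in C^1([0,T])$, we write $\omega(t) = \omega(0) + \omega'(0) t + R(t)$ with $R \in C^1([0,T])$ and $R(0)=R'(0)=0$, and split
\begin{align*}
G^{(2)}_\eps(v_\eps) ={}& \frac{\omega(0)}{\eps}\left(\int_0^T \left(\frac{W(v_\eps)}{\eps} + \eps (v_\eps')^2\right) \dd{t} - \dw(b,\alpha)\right)
\\
& + \frac{\omega'(0)}{\eps}\int_0^T \left(\frac{W(v_\eps)}{\eps} + \eps (v_\eps')^2\right) t \dd{t}
\\
& + \frac{1}{\eps}\int_0^T \left(\frac{W(v_\eps)}{\eps} + \eps (v_\eps')^2\right) R(t) \dd{t}
\\
\eqqcolon{}& \mathcal{A}_\eps + \mathcal{B}_\eps + \mathcal{C}_\eps.
\end{align*}
The plan is to show that $\mathcal{B}_\eps \to G^{(2)}(b)$ while $\mathcal{A}_\eps, \mathcal{C}_\eps \to 0$, and to check that all three pieces are controlled by the quantitative bound in \eqref{eq:liminf_1d_quantified}.

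For $\mathcal{A}_\eps$, Young's inequality $W(v)/\eps + \eps(v')^2 \geq 2\sqrt{W(v)}|v'|$ combined with the fact that total variation exceeds net displacement of $\Psi\circ v_\eps$ (where $\Psi(x) = \int_a^x 2\sqrt{W}$) yields $\int_0^T (\frac{W(v_\eps)}{\eps} + \eps(v_\eps')^2) \dd{t} \geq \dw(\alpha_\eps, \beta_\eps)$. The triangle inequality $\dw(b,\alpha) \leq \dw(\beta_\eps, b) + \dw(\alpha_\eps, \beta_\eps) + \dw(\alpha_\eps, \alpha)$ then gives
\begin{equation*}
\mathcal{A}_\eps \geq -\frac{\omega(0)}{\eps}\bigl(\dw(\alpha_\eps, \alpha) + \dw(\beta_\eps, b)\bigr),
\end{equation*}
which vanishes in the limit by assumption \eqref{eq:assumptions_1d_limsup} and contributes exactly the boundary-data terms appearing in \eqref{eq:liminf_1d_quantified}.

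The main term $\mathcal{B}_\eps$, after the substitution $s = t/\eps$ with $w_\eps(s) = v_\eps(\eps s)$, becomes
\begin{equation*}
\mathcal{B}_\eps = \omega'(0) \int_0^{T/\eps} \left( W(w_\eps(s)) + (w_\eps'(s))^2 \right) s \dd{s}.
\end{equation*}
By \Cref{lem:conv_of_rescaled}, $w_\eps \to z_\alpha$ in $\w^{1,\infty}_{\mathrm{loc}}([0,\infty))$, so the integrand converges uniformly on compact sets to $(W(z_\alpha) + (z_\alpha')^2)s = 2W(z_\alpha)s$, using $(z_\alpha')^2 = W(z_\alpha)$. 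Choose $N > \max\{T^{(\alpha)}, M\}$, where $M$ is the uniform upper bound for $S_\eps/\eps$ from \Cref{lem:properties_of_1d_minimizers,lem:properties_1d_mins_close_to_a}. Then for $\eps$ small the tail $[N, T/\eps]$ is contained in $[S_\eps/\eps, T/\eps]$, and a change of variables combined with comparing $v_\eps|_{[S_\eps,T]}$ to the optimal transition from $\theta_\eps$ to $\beta_\eps$ (as in \Cref{prop:minimizer_properties}) gives
\begin{equation*}
\int_N^{T/\eps}\!\!(W(w_\eps) + (w_\eps')^2) s \dd{s} \leq \frac{T}{\eps}\int_{S_\eps}^{T}\!\!\left(\frac{W(v_\eps)}{\eps} + \eps(v_\eps')^2\right) \dd{t} \leq \frac{CT}{\eps}\bigl(\dw(\theta_\eps, b) + \dw(\beta_\eps, b)\bigr) = o(1),
\end{equation*}
using $\dw(\theta_\eps, b), \dw(\beta_\eps, b) \in o(\eps)$. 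Combined with uniform convergence on $[0,N]$ and the fact that $W(z_\alpha) + (z_\alpha')^2$ vanishes beyond $T^{(\alpha)}$, we obtain $\mathcal{B}_\eps \to \omega'(0)\int_0^\infty 2W(z_\alpha(s)) s \dd{s} = G^{(2)}(b)$.

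For $\mathcal{C}_\eps$, since $R \in C^1$ with $R(0)=R'(0)=0$, we have $|R(t)| \leq t \phi(t)$ where $\phi(t) \coloneqq \max_{[0,t]}|\omega'(\cdot) - \omega'(0)| \to 0$. Splitting $[0,T]$ into $[0,S_\eps]$ and $[S_\eps,T]$: on the first piece, $S_\eps \leq M\eps$ gives $|R(t)|/\eps \leq M\phi(M\eps) = o(1)$, and the energy bound of \Cref{prop:minimizer_properties} bounds the integral factor uniformly; on the second piece, $|R|$ is uniformly bounded and the tail-energy estimate from the previous paragraph gives an $o(1)$ contribution. Replacing the $o(\cdot)$ estimates by their explicit $O(\cdot)$ counterparts yields the uniform constants needed for \eqref{eq:liminf_1d_quantified}. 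The main obstacle in this argument is the uniform tail control in $\mathcal{B}_\eps$: this crucially relies on the finite reach time $T^{(\alpha)} < \infty$ (a feature of the subquadratic regime) together with the quantified smallness of $\dw(\theta_\eps, b)$ and $\dw(\beta_\eps, b)$ provided by \Cref{lem:properties_of_1d_minimizers,lem:properties_1d_mins_close_to_a}.
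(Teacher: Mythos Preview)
Your proof is correct and uses the same core ingredients as the paper: the Taylor expansion of $\omega$, the rescaled convergence of \Cref{lem:conv_of_rescaled}, and the control on $S_\eps$ and $\theta_\eps$ from \Cref{lem:properties_of_1d_minimizers,lem:properties_1d_mins_close_to_a}. The one structural difference is that the paper first discards the tail $[S_\eps,T]$ (whose contribution to $G_\eps^{(2)}$ is nonnegative since $\omega\ge 0$) and only then Taylor-expands, so it never needs to estimate the residual energy on $[S_\eps,T]$; you instead keep the full interval $[0,T]$ and control the tail in each of $\mathcal{B}_\eps,\mathcal{C}_\eps$ via the restricted minimality of $v_\eps|_{[S_\eps,T]}$ together with the energy bound of \Cref{prop:minimizer_properties}. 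Your route is slightly longer but has the mild advantage of showing that $\mathcal{B}_\eps$ and $\mathcal{C}_\eps$ actually converge (not just the $\liminf$), which makes the combination with the one-sided bound on $\mathcal{A}_\eps$ transparent.
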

\begin{proof}
    Let $ v_\eps $ be a sequence of minimizers and let $ S_\eps$ and $ \theta_\eps $ be as in \Cref{lem:properties_of_1d_minimizers} respectively \Cref{lem:properties_1d_mins_close_to_a} depending on the case we have in (\ref{eq:bdry_assumptions_1d}). 
    By using $ \omega ( t ) = \omega ( 0 ) + t \omega' ( 0 ) + R ( t ) $, we estimate
    \begin{align}
    \label{eq:Geps_first}
        G_\eps^{(2)} ( v_\eps ) 
        \geq &
        \left( 
        \int_0^{S_\eps}
         \frac{1}{\eps} W ( v_\eps ) + \eps (v_\eps ' )^2  \dd{ t }
        - 
        \mathrm{d}_{W} ( \alpha, b ) 
        \right)
        \frac{ \omega ( 0 )}{ \eps }
        \\
    \label{eq:Geps_second}
        &{}+ 
        \int_0^{S_\eps}
        \left( \frac{1}{\eps} W ( v_\eps ) + \eps (v_\eps ')^2 \right) t \dd{ t }
        \frac{ \omega' ( 0 ) }{ \eps }
        \\ &{} +
    \label{eq:Geps_third}
        \int_0^{S_\eps}
        \left( \frac{1}{\eps} W ( v_\eps ) + \eps (v_\eps ')^2 \right) \frac{R( t )}{\eps } \dd{ t }.
    \end{align}
    By the same arguments as in the $\limsup$-inequality and using that $ S_\eps / \eps$ stays uniformly bounded by \Cref{lem:properties_of_1d_minimizers} respectively \Cref{lem:properties_1d_mins_close_to_a}, the third summand (\ref{eq:Geps_third}) vanishes in the limit $ \eps \to 0 $.
    For the first summand (\ref{eq:Geps_first}) we apply Young's inequality and the triangle inequality for $ \dw $ to get
    \begin{align*}
        \left( 
        \int_0^{S_\eps}
         \frac{1}{\eps} W ( v_\eps ) + \eps (v_\eps ' )^2  \dd{ t }
        - 
        \mathrm{d}_{W} ( \alpha, b ) 
        \right)
        \frac{ \omega ( 0 )}{ \eps }
        &\geq 
        \left( 
        \int_0^{S_\eps}
         2 \sqrt{ W ( v_\eps) } v_\eps ' ( t )   \dd{ t }
        - 
        \mathrm{d}_{W} ( \alpha, b ) 
        \right)
        \frac{ \omega ( 0 )}{ \eps }
        \\
        & =
        \left( 
        \int_{\alpha_\eps}^{\theta_\eps}
         2 \sqrt{ W ( \rho ) } 
         \dd{ \rho }
        - 
        \int_{ \alpha }^{ b }
        2 \sqrt{ W ( \rho ) } \dd{ \rho}
        \right)
        \frac{ \omega ( 0 )}{ \eps }
        \\
        & \geq
        \left(
        -\dw ( \alpha_\eps, \alpha ) + \dw ( \alpha, \theta_\eps ) - \dw ( \alpha , b ) 
        \right)
        \frac{2 \omega ( 0 ) }{ \eps }
        \\
        & \geq
        \left( 
        -\dw ( \alpha_\eps , \alpha ) - \dw ( \theta_\eps ,b  ) 
        \right)
        \frac{2 \omega ( 0 ) }{ \eps },
    \end{align*}
    which vanishes by our choice of $ \theta_\eps $ and the assumption on $ \alpha_\eps $.
    Lastly we take care of the second summand (\ref{eq:Geps_second}). 
    By rescaling and using the notation from \Cref{lem:conv_of_rescaled}, we obtain that it is equal to
    \begin{equation*}
        \int_0^{S_\eps}
        \left( \frac{1}{\eps} W ( v_\eps ) + \eps (v_\eps ')^2 \right)t \dd{ t }
        \frac{ \omega' ( 0 ) }{ \eps }
        = 
        \int_0^{ S_\eps / \eps }
         \left( W ( w_\eps ) + (w_\eps ' )^2 \right) s
        \dd{ s }
        \omega' ( 0 ).
    \end{equation*}
    By \Cref{lem:properties_of_1d_minimizers} respectively \Cref{lem:properties_1d_mins_close_to_a}, we have that $ S_\eps / \eps \ $ stays bounded, and by \Cref{lem:conv_of_rescaled}, we know that $ w_\eps \to z_\alpha $ in $ \w^{1 , \infty }$ on every compact interval $ [0, N ] $. Additionally using that $ z_\alpha(t) = b $ for $ t \geq T^{(\alpha)}$, the second summand (\ref{eq:Geps_second}) converges in the limit $ \eps \to 0 $ to
    \begin{align*}
        \int_0^{S_\eps}
        t \left( \frac{1}{\eps} W ( v_\eps ) + \eps (v_\eps ')^2 \right) \dd{ t }
        \frac{ \omega' ( 0 ) }{ \eps }
        &\to 
        \int_{0 }^{ T^{(\alpha)} }
        ( W ( z_\alpha ) + (z_\alpha ')^2 ) s \dd{ s }
        \omega' ( 0 )
        \\ &
        =
        \int_{ 0 }^{ T^{(\alpha)} }
        2 \sqrt{ W ( z_\alpha(s) ) } z_\alpha ' ( s ) s \dd{ s } \omega' ( 0 ),
    \end{align*}
    which is what we wanted to show. 

    In order to obtain the lower bound (\ref{eq:liminf_1d_quantified}), we track the previous estimates and obtain
    \begin{align}
    \notag
        G_\eps^{(2)} ( v_\eps ) 
        \geq{}&   
        \frac{-2\omega ( 0 ) }{ \eps } 
        \left(
        \dw ( \alpha_\eps, \alpha ) + \dw ( \theta_\eps , b ) 
        \right)
        \\
        \label{eq:quant_second}
        &+
        \omega'( 0 ) \int_0^{S_\eps}  \left( W ( w_\eps ) + (w_\eps ')^2 \right) s
        \dd{ s }
        \\
        \label{eq:quant_third}
        & +
        \int_0^{S_\eps}
        \left(
        \frac{1}{\eps} W ( v_\eps ) 
        +
        \eps ( v_\eps ' )^2 
        \right)
        \frac{R(t)}{\eps}
        \dd{ s }.
    \end{align}
    We note that by inequality (\ref{eq:thetaeps_bound}) respectively (\ref{eq:thetaeps_bound_close_a}), we have due to the monotonicity of $ \dw $ and the asymptotic behaviour of $ \dw $ proven in \Cref{p:prop W} that
    \begin{align*}
        \dw ( \theta_\eps, b ) 
        \leq
         \dw ( b , b - C ( b - \beta_\eps ) ) + \dw ( b, b - C \eps^{1/(1+q)} )
        \leq
        C \dw ( \beta_\eps , b ) 
        +
        C \eps^{ (3+q)/(2(1+q))}.
    \end{align*}
    The latter summand is in $ o ( \eps ) $ due to $ q \in (0,1) $ and can in in the final estimate (\ref{eq:liminf_1d_quantified}) thus be absorbed into the constant. For the summand (\ref{eq:quant_second}), we note that by \Cref{p:upper bound derivative}  and inequality (\ref{eq:Seps_upper_bound}) respectively (\ref{eq:Seps_upper_bound_close_a}) it can be bounded from below by
    \begin{equation*}
    \omega'( 0 ) \int_0^{S_\eps} \left( W ( w_\eps ) + (w_\eps ')^2 \right) s
        \dd{ s }
        \geq
        -C \left(\frac{S_\eps}{\eps }\right)^2
        \geq
        -C M^2,
    \end{equation*}
    where $ M $ additionally may depend on the convergence rate of $ \beta_\eps $ to $ b $. The last summand (\ref{eq:quant_third}) is handled similarly by using the energy estimate (\ref{eq:energy_bound}). Combining these arguments yields the desired lower bound (\ref{eq:liminf_1d_quantified}), which finishes the proof.
\end{proof}

\section{Multidimensional case}
\label{sct:multidim_case}

Let $\Omega$, $ g $, $ g_\eps $ and the potential $ W  $ be as in \Cref{sct:assumptions}. We define the Cahn-Hilliard functional with Dirichlet data $ \mathcal{F}_\eps^{(0)} $ as in equation (\ref{e:Cahn-Hillard functionals definition}).
First we are going to discuss the properties of the map $ \Phi ( y, t ) = y + t \nu ( y ) $ which will be used with the area formula to reduce to the one-dimensional case. An illustration is given in \Cref{fig:reduction}.
\begin{figure}
    \centering

\tikzset{every picture/.style={line width=0.75pt}} 

\begin{tikzpicture}[x=0.75pt,y=0.75pt,yscale=-1,xscale=1]

\draw  [color={rgb, 255:red, 208; green, 2; blue, 27 }  ,draw opacity=1 ][fill={rgb, 255:red, 126; green, 211; blue, 33 }  ,fill opacity=1 ] (141.5,48.25) .. controls (203.5,18.75) and (337.5,17.25) .. (360.5,88.25) .. controls (383.5,159.25) and (314.88,117.38) .. (316,157.25) .. controls (317.13,197.13) and (517.5,186.25) .. (439.5,240.75) .. controls (361.5,295.25) and (208.5,278.25) .. (129,219.75) .. controls (49.5,161.25) and (79.5,77.75) .. (141.5,48.25) -- cycle ;
\draw  [dash pattern={on 0.84pt off 2.51pt}]  (82.6,123.4) -- (103,125.8) ;
\draw  [fill={rgb, 255:red, 248; green, 231; blue, 28 }  ,fill opacity=1 ] (300,55.25) .. controls (328,61.75) and (363.5,99.75) .. (298.5,141.25) .. controls (233.5,182.75) and (345,200.25) .. (402.5,217.25) .. controls (460,234.25) and (334,258.25) .. (283,253.25) .. controls (232,248.25) and (196,236.75) .. (154.5,211.75) .. controls (113,186.75) and (106,179.25) .. (103.5,158.25) .. controls (101,137.25) and (101,123.75) .. (113.5,98.75) .. controls (126,73.75) and (145.5,68.25) .. (180,56.25) .. controls (214.5,44.25) and (272,48.75) .. (300,55.25) -- cycle ;
\draw    (163,239.25) -- (184.93,204.44) ;
\draw [shift={(186,202.75)}, rotate = 122.22] [color={rgb, 255:red, 0; green, 0; blue, 0 }  ][line width=0.75]    (10.93,-3.29) .. controls (6.95,-1.4) and (3.31,-0.3) .. (0,0) .. controls (3.31,0.3) and (6.95,1.4) .. (10.93,3.29)   ;

\draw (81.5,67.4) node [anchor=north west][inner sep=0.75pt]    {$\Omega $};
\draw (171,188.4) node [anchor=north west][inner sep=0.75pt]    {$\nu $};
\draw (289,158.4) node [anchor=north west][inner sep=0.75pt]    {$\Omega _{\delta }$};
\draw (85.6,123.4) node [anchor=north west][inner sep=0.75pt]    {$\delta $};
\draw (182.4,131.6) node [anchor=north west][inner sep=0.75pt]    {$\Omega \setminus \Omega _{\delta }$};
\draw (358.5,71) node [anchor=north west][inner sep=0.75pt]    {$\partial \Omega $};

\end{tikzpicture}

    \caption{$ \Omega_\delta $ is the green subset of $ \Omega $ of points of distance smaller than $ \delta $ to its boundary. $ \Omega$ is the union of the green and yellow set and $\nu $ is the inwards unit normal to $ \partial \Omega $ marked as red.}
    \label{fig:reduction}
\end{figure}
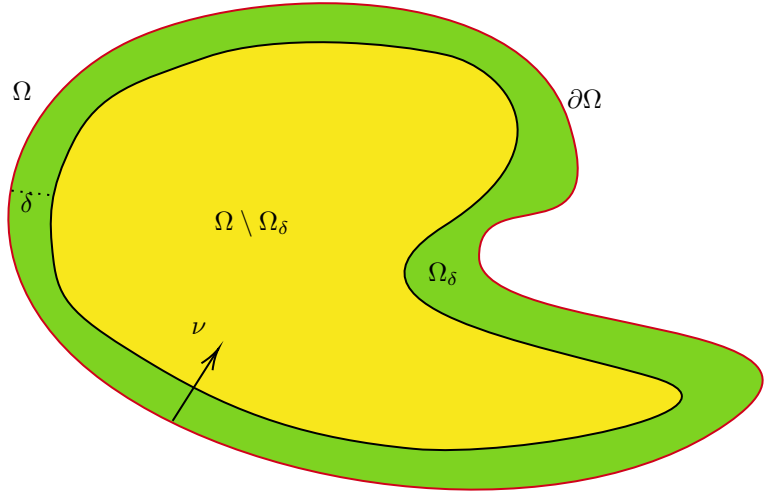

\begin{proposition}\label{p:The Phi}
For every $\delta>0$ small enough, depending on $\Omega$, define the mapping $\Phi:\partial \Omega \times (0,2\delta)\to \R^n$ by
\begin{equation*}
    \Phi(y,t)\coloneqq 
    y+t\nu(y)
\end{equation*}
where $\nu$ is the unit inward normal vector to $\partial \Omega$. Then $ \Phi $ is of class $C^{1}$, is injective with 
\begin{equation*}
   \det J_{\Phi_{\delta}}(y,t)>0
\end{equation*}
for every $(y,t)\in \partial \Omega\times (0,2\delta)$,
\begin{align*}
\Phi(\partial \Omega \times (0,\delta))&=\Omega_{\delta} \coloneqq \{x\in \Omega  :  0<\mathrm{dist}(x,\partial \Omega)<\delta\}
\shortintertext{and}
  \Phi(\partial \Omega \times \{\delta\}) &= \{x\in \Omega  : \mathrm{dist}(x,\partial \Omega)=\delta\}.
\end{align*}
Moreover, we have that $\Omega\setminus \overline{(\Omega_{\delta})}$ is connected, 
\begin{equation*}
    \det J_{\Phi}(y,0)=1 \quad \textup{for every $y\in \partial\Omega$}
\end{equation*}
and
\begin{equation}\label{e:mean curvature}
    \frac{\partial \det J_{\Phi}}{\partial t}(y,0)=-\kappa(y) \quad  \textup{for every $y\in \partial \Omega$},
\end{equation}
where $ J_\Phi $ is the Jacobian associated with $ \Phi $ and $\kappa:\partial \Omega\to \R$ is the scalar mean curvature of $\partial \Omega$.
\end{proposition}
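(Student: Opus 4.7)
The plan is to treat this as a standard tubular neighbourhood statement and to carry out the verifications in the order given. Since $\partial\Omega$ is of class $C^2$, the inward unit normal $\nu$ is $C^1$ on $\partial\Omega$, so $\Phi$ is $C^1$ on $\partial\Omega\times(0,2\delta)$. Fix a point $y_0\in\partial\Omega$, choose a local $C^2$ parametrisation $\psi\colon U\subset\R^{n-1}\to\partial\Omega$ with orthonormal tangent frame at $\psi^{-1}(y_0)$, and express $\Phi$ in these coordinates. The differential $D\Phi(y_0,0)$ then has, with respect to the orthonormal basis $(\partial_1\psi,\dots,\partial_{n-1}\psi,\nu)$ at $y_0$, the matrix representation equal to the identity, which yields $\det J_\Phi(y_0,0)=1$. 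For the $t$-derivative, differentiating $\partial_i\Phi=\partial_i\psi+t\partial_i\nu$ and using the Weingarten relation $\partial_i\nu=-A\partial_i\psi$ (with $A$ the shape operator), one obtains that in the orthonormal frame $J_\Phi(y_0,t)=(I-tA\mid 0)+(0\mid\nu)$, so $\det J_\Phi(y_0,t)=\det(I-tA)$. Expanding, $\det(I-tA)=1-t\,\mathrm{tr}\,A+O(t^2)$, and since the convention $\kappa=\mathrm{tr}\,A$ (sum of principal curvatures with the inward orientation) is the one adopted in the paper, \eqref{e:mean curvature} follows by differentiating at $t=0$.

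Next I would establish local injectivity and positivity of the Jacobian: since $\det J_\Phi(y,0)=1>0$ and $J_\Phi$ is continuous in $(y,t)$, by the inverse function theorem each $(y_0,0)$ has a neighbourhood on which $\Phi$ is a $C^1$-diffeomorphism with positive Jacobian. Compactness of $\partial\Omega$ then yields a uniform $\delta_1>0$ such that $\Phi$ is a local diffeomorphism on $\partial\Omega\times(-\delta_1,\delta_1)$ with $\det J_\Phi>0$. Global injectivity on $\partial\Omega\times(0,2\delta)$ for $\delta$ small follows by a standard contradiction argument: if there existed sequences $(y_k,t_k)\neq(y_k',t_k')$ with $t_k,t_k'\to 0$ and $\Phi(y_k,t_k)=\Phi(y_k',t_k')$, one extracts a limit $y_k,y_k'\to y^*\in\partial\Omega$ and contradicts local injectivity at $(y^*,0)$.

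For the image statements, the inclusion $\Phi(\partial\Omega\times(0,\delta))\subseteq\Omega_\delta$ follows because $\Phi(y,t)=y+t\nu(y)$ lies in $\Omega$ for small $t>0$ (inward normal) and has distance at most $t$ from $\partial\Omega$. Conversely, for $x\in\Omega_\delta$ and $\delta$ smaller than the reach of $\partial\Omega$, the nearest-point projection $y=\pi(x)\in\partial\Omega$ is unique, the minimising segment $[y,x]$ is orthogonal to $\partial\Omega$, so that $x=y+|x-y|\nu(y)=\Phi(y,|x-y|)$. This also shows that $|x-y|=\mathrm{dist}(x,\partial\Omega)$, whence $\Phi(\partial\Omega\times\{\delta\})=\{\mathrm{dist}(\cdot,\partial\Omega)=\delta\}$ and similarly the interior characterisation. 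Connectedness of $\Omega\setminus\overline{\Omega_\delta}$ for small $\delta$ is obtained by noting that $\Omega$ is connected (as a domain with $C^2$-boundary our analysis is local, and otherwise one argues component by component) and constructing a deformation retract $r_s\colon\Omega\to\Omega\setminus\overline{\Omega_\delta}$ by flowing points of $\Omega_\delta$ inward along integral curves of a smooth vector field that coincides with $\nu$ near $\partial\Omega$; this retract is a homotopy equivalence, so $\Omega\setminus\overline{\Omega_\delta}$ inherits connectedness from $\Omega$.

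The main technical obstacle is the Jacobian derivative formula \eqref{e:mean curvature}: one must choose coordinates carefully so that the cross terms between tangential and normal directions vanish at $(y,0)$ and identify the trace of the shape operator with the scalar mean curvature $\kappa$ as used in the paper. Once this is correctly set up, all other claims reduce to standard consequences of the inverse function theorem and basic metric properties of the nearest-point projection on a $C^2$ hypersurface, for which I would refer the reader to classical references on tubular neighbourhoods rather than carrying out the routine calculations in full.
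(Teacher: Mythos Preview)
Your argument is correct and is the standard tubular-neighbourhood proof. The paper, however, does not prove this proposition at all: it simply writes ``The proof can be found in \cite[Lem.~2.6]{fonseca2025secondordergammalimitcahnhilliardfunctional1}.'' So your self-contained sketch---local parametrisation to compute $\det J_\Phi(y,t)=\det(I-tA)$, the Weingarten relation giving $\partial_t\det J_\Phi(y,0)=-\mathrm{tr}\,A=-\kappa(y)$, global injectivity via compactness and the inverse function theorem, the image description via the nearest-point projection, and connectedness via a retraction---goes beyond what the paper supplies and is exactly the content of the cited lemma.
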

The proof can be found in \cite[Lem.~2.6]{fonseca2025secondordergammalimitcahnhilliardfunctional1}.
Note that the scalar mean curvature is oriented and normalized such that the unit sphere in $ \R^n $ has constant positive mean curvature $ n - 1 $.

\begin{proposition}
\label{prop:zero_order_gamma_limit}
Let $\mathcal{F}^{(0)} \colon \mathrm{L}^1(\Omega)\to [0,\infty]$ be the functional given by
\begin{equation*}
    \mathcal{F}^{(0)}(u)=\int_\Omega W(u)\dd{x}.
\end{equation*}
Then we have
\begin{equation*}
    \Gamma \textup{-}\lim_{\eps \to 0} \mathcal{F}_{\eps}^{(0)}=\mathcal{F}^{(0)}.
\end{equation*}
\end{proposition}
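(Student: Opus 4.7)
The plan is to establish the $\Gamma$-limit via separate $\liminf$ and $\limsup$ inequalities, since the zeroth order rescaling is the standard one and the gradient penalization only matters through the Dirichlet boundary condition.

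For the $\liminf$ inequality, suppose $u_\eps \to u$ in $\lp^1(\Omega)$. We may assume $\liminf_\eps \mathcal{F}_\eps^{(0)}(u_\eps) < \infty$, and by the truncation argument from \Cref{sct:assumptions} we may also assume $u_\eps \in [a,b]$. Extracting a subsequence along which the liminf is attained and which also converges $\lm^n$-almost everywhere, the continuity of $W$ together with Fatou's lemma (or dominated convergence, since $W$ is bounded on $[a,b]$) yields
\begin{equation*}
\mathcal{F}^{(0)}(u) = \int_\Omega W(u)\dx \leq \liminf_{\eps\to 0} \int_\Omega W(u_\eps)\dx \leq \liminf_{\eps \to 0} \mathcal{F}_\eps^{(0)}(u_\eps).
\end{equation*}

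For the recovery sequence, given $u \in \lp^1(\Omega)$ with $\mathcal{F}^{(0)}(u) < \infty$, we may truncate to $u \in [a,b]$. First approximate $u$ by mollification $u_\rho \in C^\infty(\overline{\Omega};[a,b])$ so that $u_\rho \to u$ in $\lp^1$ and, by dominated convergence, $\int_\Omega W(u_\rho)\dx \to \int_\Omega W(u)\dx$. To handle the boundary condition, choose a length scale $\delta_\eps \to 0$ with $\eps^2/\delta_\eps \to 0$ (e.g.\ $\delta_\eps = \eps$) and use the tubular coordinate map $\Phi$ of \Cref{p:The Phi} to extend $g_\eps$ into $\Omega_{2\delta_\eps}$ by setting $\tilde g_\eps(\Phi(y,t)) \coloneqq g_\eps(y)$. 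Let $\psi_\eps \in C^\infty(\overline\Omega;[0,1])$ be a cutoff with $\psi_\eps \equiv 1$ on $\Omega \setminus \Omega_{\delta_\eps}$, $\psi_\eps \equiv 0$ on $\partial\Omega$, and $|\nabla\psi_\eps|\leq C/\delta_\eps$, and define
\begin{equation*}
u_{\eps,\rho} \coloneqq \psi_\eps\, u_\rho + (1-\psi_\eps)\,\tilde g_\eps \in \h^1(\Omega),
\end{equation*}
so that $u_{\eps,\rho} = g_\eps$ on $\partial\Omega$. Using $|\Omega_{\delta_\eps}| \leq C\delta_\eps$, the boundedness of $u_\rho$, $\tilde g_\eps$ on $[a,b]$, and the bound $|\nabla\tilde g_\eps|\lesssim |\nabla_\tau g_\eps(y)|$ along $\Phi(y,t)$, we obtain $\|u_{\eps,\rho}-u_\rho\|_{\lp^1(\Omega)} = O(\delta_\eps)$, $\int_\Omega W(u_{\eps,\rho})\dx \to \int_\Omega W(u_\rho)\dx$ as $\eps \to 0$, and
\begin{equation*}
\eps^2\int_\Omega |\nabla u_{\eps,\rho}|^2 \dx \lesssim \eps^2\|\nabla u_\rho\|_\infty^2|\Omega| + \eps^2\delta_\eps \|\nabla_\tau g_\eps\|_{\lp^\infty}^2 + \frac{\eps^2}{\delta_\eps^2}(b-a)^2|\Omega_{\delta_\eps}|,
\end{equation*}
all of which tend to zero as $\eps \to 0$ for fixed $\rho$ under our choice $\delta_\eps = \eps$ (the tangential gradient term is handled by the mild regularity of $g_\eps$; alternatively by assumption (\ref{eq:ass_tang_deriv_squared}), which is far more than needed here). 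A standard diagonal argument with $\rho = \rho(\eps)\to 0$ slowly produces the desired recovery sequence $u_\eps \coloneqq u_{\eps,\rho(\eps)}$ with $u_\eps \to u$ in $\lp^1(\Omega)$ and $\limsup_\eps \mathcal{F}_\eps^{(0)}(u_\eps) \leq \mathcal{F}^{(0)}(u)$.

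The only delicate point is the boundary layer construction, and in particular the cross term $\eps^2|\nabla\psi_\eps|^2|u_\rho-\tilde g_\eps|^2$, which is controlled by $\eps^2/\delta_\eps$ after integration; this dictates the requirement $\delta_\eps\gg\eps^2$. Since at this zeroth order no quantitative matching between bulk and boundary data is needed, any choice such as $\delta_\eps = \eps$ suffices, so the argument is truly standard, in contrast with the much more subtle analysis required for the second-order limit.
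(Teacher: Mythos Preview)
Your proof is correct and follows the same overall scheme as the paper's: Fatou for the $\liminf$, and for the $\limsup$ a cutoff interpolation between a smooth interior approximation of $u$ and an extension of the boundary datum $g_\eps$, with the cutoff transitioning over a layer of width $\sim\eps$. The one substantive difference is in how the boundary extension is built. You extend $g_\eps$ constantly along inward normals via the tubular map $\Phi$, which requires $g_\eps\in\h^1(\partial\Omega)$ and control on $\nabla_\tau g_\eps$ (your invocation of assumption~(\ref{eq:ass_tang_deriv_squared}) is the right fix for the $\lp^\infty$-bound you initially wrote, which is not assumed). The paper instead invokes the $\h^{1/2}$ trace extension theorem to produce $v_\eps\in\h^1(\R^n)$ with $\|v_\eps\|_{\h^1}\lesssim\|g_\eps\|_{\h^{1/2}}$, and thereby gets away with the strictly weaker hypothesis $\eps^2[g_\eps]_{\h^{1/2}(\partial\Omega)}\to 0$, which it notes explicitly. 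Since the stronger $\h^1$ assumptions on $g_\eps$ are in force throughout the paper anyway, your more explicit construction is perfectly adequate here; the paper's abstract extension simply buys a sharper statement about what is really needed at zeroth order.
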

Since the Proposition is no novel result and its proof straight forward, we have moved it to the appendix. 
The strategy for finding the recovery sequence is to choose a sequence whose gradient deteriorates, but slow enough so that  its contribution to the energy vanishes.

Having established the $0$-th order $ \Gamma $-limit, we see that $ \min \mathcal{F}^{(0)} = 0 $ by choosing any $ u \in \mathrm{L}^1 ( \Omega ; \{a,b\} ) $ as a minimizer. 
We consequently define $\mathcal{F}^{(1)}_{\eps}:\lp^1(\Omega)\to [0,\infty]$ by
\begin{equation*}
\mathcal{F}^{(1)}_{\eps}(u)=
\begin{cases}
    \int_\Omega \frac{W(u)}{\eps}+\eps|\nabla u|^2 \dd{x}, & \textup{if $u\in H^1(\Omega)$ and $u=g_{\eps}$ on $\partial \Omega$}, \\
    \infty, & \textup{otherwise}.
\end{cases}
\end{equation*}

\begin{proposition}\label{p:first order gamma limit multidim}
Let $\mathcal{F}^{(1)}:\mathrm{L}^1(\Omega)\to [0,\infty]$ be the functional given by 
\begin{equation*}
\mathcal{F}^{(1)}(u)=
\begin{cases}
    \frac{C_W}{b-a}|\diff u|(\Omega)+\int_{\partial \Omega}\dw(u,g)\dd{ \mathcal{H}^{n-1}}, & \textup{if $u\in \bv(\Omega;\{a,b\})$}, \\
    \infty , & \textup{otherwise}.
\end{cases}
\end{equation*}
Then 
\begin{equation*}
    \Gamma \textup{-}\lim_{\eps \to 0} \mathcal{F}^{(1)}_{\eps}=\mathcal{F}^{(1)}.
\end{equation*}
\end{proposition}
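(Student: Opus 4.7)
The plan is to follow the classical Rubinstein--Sternberg strategy, combining the Modica--Mortola energy bound in the interior with a boundary-layer slicing argument through the map $\Phi$ of \Cref{p:The Phi} to produce the geodesic distance term. The subquadratic growth plays no essential role at this order; the key inputs are the $1$-dimensional weighted result \Cref{p:first order gamma limit} and the assumptions (\ref{eq:geps_to_g})--(\ref{eq:ass_tang_deriv_squared}).

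For the $\liminf$-inequality, I would take $u_\eps \to u$ in $\lp^1(\Omega)$ with $\liminf \mathcal{F}^{(1)}_\eps(u_\eps) < \infty$; after truncation we may assume $u_\eps \in [a,b]$. Setting $\Psi(s) \coloneqq \int_a^s 2\sqrt{W(\rho)}\, \dd{\rho}$, Young's inequality gives the pointwise bound $|\nabla \Psi(u_\eps)| \leq W(u_\eps)/\eps + \eps|\nabla u_\eps|^2$, so that $\{\Psi(u_\eps)\}$ is uniformly bounded in $\bv(\Omega)$. Lower semicontinuity of the total variation, combined with $\Psi(b)-\Psi(a)=C_W$ and the fact that $W(u)=0$ a.e.\ (forced by the energy bound), yields $u\in\bv(\Omega;\{a,b\})$ together with the interior estimate $\frac{C_W}{b-a}|\diff u|(\Omega) \leq \liminf_{\eps\to 0} \mathcal{F}^{(1)}_\eps(u_\eps)$ on every compactly contained open set.

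To produce the boundary term, I would localize in a tubular neighborhood $\Omega_\delta$ of $\partial\Omega$ (see \Cref{p:The Phi}) and use the area formula through $\Phi(y,t)=y+t\nu(y)$: the energy on $\Omega_\delta$ rewrites as $\int_{\partial\Omega} G^{(1)}_{\eps,y}(v_{\eps,y})\,\dd{\hm^{n-1}(y)}$ plus a tangential-gradient remainder that is non-negative and may be discarded, where $v_{\eps,y}(t) \coloneqq u_\eps(\Phi(y,t))$, $v_{\eps,y}(0)=g_\eps(y)$, and $\omega_y(t)=\det J_\Phi(y,t)$. Slicewise application of \Cref{p:first order gamma limit} (with $\alpha_\eps = g_\eps(y) \to g(y) = \alpha$ by (\ref{eq:geps_to_g})) together with Fatou's lemma produces the boundary contribution $\int_{\partial\Omega} \dw(u,g)\omega_y(0)\,\dd{\hm^{n-1}(y)} = \int_{\partial\Omega} \dw(u,g)\,\dd{\hm^{n-1}}$, since $\omega_y(0)=1$. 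Sending $\delta \downarrow 0$ and adding the interior Modica--Mortola bound on $\Omega\setminus\overline{\Omega_\delta}$ closes the $\liminf$-inequality.

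For the $\limsup$-inequality, given $u\in\bv(\Omega;\{a,b\})$, I would build the recovery sequence in two pieces. Away from $\partial\Omega$ use the standard Modica--Mortola recovery sequence based on the optimal transition profile $z_a(\cdot/\eps)$ across the jump set $J_u$, producing the expected $\frac{C_W}{b-a}|\diff u|(\Omega)$ contribution. Inside the collar $\Omega_\delta$, using the slicing $\Phi$, I would set $u_\eps(\Phi(y,t)) \coloneqq z_{g_\eps(y)}(t/\eps)$ on a layer of width of order $\eps$, prolong by the corresponding phase $u(y)\in\{a,b\}$ on the remainder of each normal ray, and match the interior construction at $t=\delta$. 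The boundary-layer energy converges to $\int_{\partial\Omega}\dw(u,g)\,\dd{\hm^{n-1}}$ by substitution and (\ref{eq:geps_to_g}), while the tangential-gradient contribution vanishes by (\ref{eq:ass_tang_deriv_squared}). The principal obstacle is handling portions of $J_u$ that approach $\partial\Omega$: the normal-profile construction and the interior profile must not overlap, which is achieved by first approximating $u$ by a function whose jump set stays a definite distance from $\partial\Omega$ and then using a standard diagonal and density argument.
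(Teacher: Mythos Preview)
Your proposal is correct and follows essentially the same route as the paper: for the $\liminf$, split $\Omega$ into a collar $\Omega_\delta$ (handled by slicing through $\Phi$, Fatou, and the one-dimensional \Cref{p:first order gamma limit}) and the interior (handled by the classical Modica--Mortola bound), then send $\delta\to 0$; for the $\limsup$, reduce by strict approximation to $u$ constant along normal rays near $\partial\Omega$ (this is exactly \Cref{lem:approx_near_boundary}), glue an interior Modica--Mortola recovery sequence to a normal boundary-layer transition, and control the tangential-gradient error via (\ref{eq:ass_tang_deriv_squared}). One small correction: in the collar the transition must go from $g_\eps(y)$ to the trace value $\tr u(y)\in\{a,b\}$, not always to $b$; the profile $z_{g_\eps(y)}$ as defined in the paper reaches $b$, so your construction as written would be discontinuous wherever $\tr u = a$ --- the paper instead uses the signed inverse of $\Psi_{g_\eps(y)}$ on $[g_\eps(y),\tr u_\eps(y)]$ to handle both directions.
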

Again, this is not a novel result since it has been proven in \cite{RubinsteinSternberg90} and \cite{gazoulis_24_gamma_convergence}, albeit under slightly different assumptions. For the recovery sequence we use the idea that in the interior of $ \Omega $, the original result from Modica and Mortola \cite{ModicaMortola77} is used, while at the boundary, we use optimal transition profiles in normal directions. Our proof has been moved to the appendix since we want to focus on the second-order $ \Gamma $-limit.

We now want to discuss some consequences of the constant function $ b $ being the unique minimizer of $ \mathcal{F}^{(1)}$. For this we mainly use \cite[Thm.~4.1]{fonseca2025secondordergammalimitcahnhilliardfunctional}, which also holds up in the case of a subquadratic potential and states the following. 
If $ b $ is the unique minimizer of $ \mathcal{F}^{(1)} $, then at every point $ x \in \partial \Omega $ at which $ g ( x ) = a $, we must already have $ \kappa ( x ) \leq 0 $. In fact positive curvature at a point $ x \in \partial \Omega $ implies that we find a small neighbourhood $ A $ of $ x $ which has positive curvature. Then there exists an open set $ U \subseteq \Omega $ such that $ \partial U = A \cup B $ for some $ B \subseteq \Omega $ with $ \hm^{n-1} ( B ) < \hm^{n-1} ( A ) $. Consequently doing a transition along $ B $ and setting $ u = a $ on $ U $ yields a competitor with less energy. We have illustrated this in \Cref{fig:convexity_figure}.

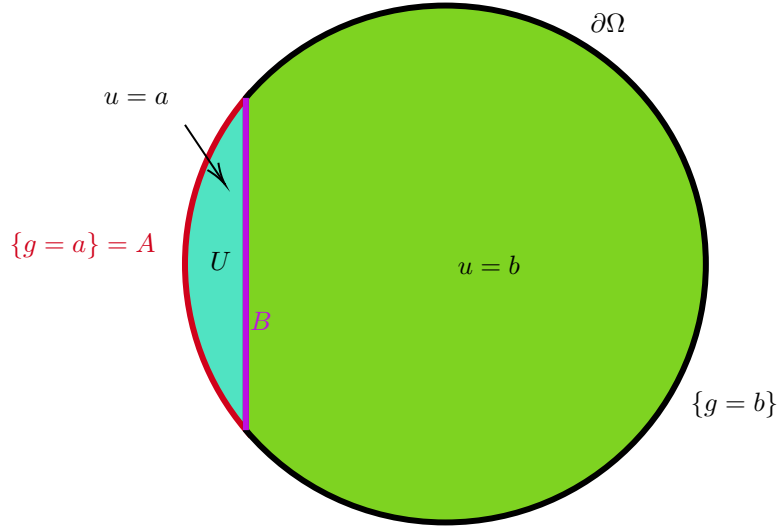
\begin{figure}
    \centering

\tikzset{every picture/.style={line width=0.75pt}} 

\begin{tikzpicture}[x=0.75pt,y=0.75pt,yscale=-1,xscale=1]

\draw  [color={rgb, 255:red, 208; green, 2; blue, 27 }  ,draw opacity=1 ][fill={rgb, 255:red, 80; green, 227; blue, 194 }  ,fill opacity=1 ][line width=2.25]  (220.41,243.56) .. controls (201.43,220.97) and (190,191.82) .. (190,160) .. controls (190,128.18) and (201.43,99.03) .. (220.41,76.44) -- cycle ;
\draw  [fill={rgb, 255:red, 126; green, 211; blue, 33 }  ,fill opacity=1 ][line width=2.25]  (220.41,76.44) .. controls (244.26,48.05) and (280.02,30) .. (320,30) .. controls (391.8,30) and (450,88.2) .. (450,160) .. controls (450,231.8) and (391.8,290) .. (320,290) .. controls (280.02,290) and (244.26,271.95) .. (220.41,243.56) -- cycle ;
\draw [color={rgb, 255:red, 189; green, 16; blue, 224 }  ,draw opacity=1 ][line width=2.25]    (220.41,76.44) -- (220.41,243.56) ;
\draw    (190,90) -- (208.89,118.34) ;
\draw [shift={(210,120)}, rotate = 236.31] [color={rgb, 255:red, 0; green, 0; blue, 0 }  ][line width=0.75]    (10.93,-3.29) .. controls (6.95,-1.4) and (3.31,-0.3) .. (0,0) .. controls (3.31,0.3) and (6.95,1.4) .. (10.93,3.29)   ;

\draw (101,142.4) node [anchor=north west][inner sep=0.75pt]  [font=\normalsize,color={rgb, 255:red, 208; green, 2; blue, 27 }  ,opacity=1 ]  {$\{g=a\} =A$};
\draw (201,152.4) node [anchor=north west][inner sep=0.75pt]    {$U$};
\draw (221,182.4) node [anchor=north west][inner sep=0.75pt]  [color={rgb, 255:red, 189; green, 16; blue, 224 }  ,opacity=1 ]  {$B$};
\draw (391,32.4) node [anchor=north west][inner sep=0.75pt]    {$\partial \Omega $};
\draw (441,222.4) node [anchor=north west][inner sep=0.75pt]    {$\{g=b\}$};
\draw (318,152.4) node [anchor=north west][inner sep=0.75pt]    {$ \begin{array}{l}
u=b\\
\end{array}$};
\draw (148,72.4) node [anchor=north west][inner sep=0.75pt]    {$u=a$};

\end{tikzpicture}

    \caption{The red curve $ A $ is longer than the magenta line $ B $, thus a transition along $ B $ would have less energy.}
    \label{fig:convexity_figure}
\end{figure}

\begin{proposition}
\label{prop:curvature_b-unqie_min}
    Assume that $ \partial \Omega $ is of class $ C^2 $ and that $ g \in C^2 ( \partial \Omega ;[a,b]) $ is such that the constant function $ b $ is the unique minimizer of the functional $ \mathcal{F}^{(1)}$. Then 
    \begin{equation*}
        \{ g = a \} \subseteq \{ \kappa \leq 0 \}.
    \end{equation*}
\end{proposition}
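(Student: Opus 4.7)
The plan is to argue by contradiction: assume there is $x_0 \in \partial\Omega$ with $g(x_0) = a$ and $\kappa(x_0) > 0$, and construct a competitor with strictly smaller energy than the constant function $b$. By continuity of $g$ and $\kappa$, I can find an open subset $A \subseteq \partial\Omega$ with $C^2$-boundary containing $x_0$ and constants $\kappa_1 > 0$, $\delta > 0$ such that $\kappa \geq \kappa_1$ and $g(y) < a + \delta$ for all $y \in A$. For $\delta$ sufficiently small, continuity of $\dw$ gives $\dw(a, g(y)) < \eta$ and $\dw(b, g(y)) > C_W - \eta$ uniformly on $A$, where $\eta > 0$ is a parameter to be fixed at the end.

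Next I use the tubular neighborhood map $\Phi(y, s) = y + s\nu(y)$ from \Cref{p:The Phi}. For small $t > 0$ define the thin set $U_t \coloneqq \Phi(A \times (0, t)) \subseteq \Omega$; its topological boundary inside $\overline\Omega$ decomposes as the union of $A$, the parallel hypersurface $B_t = \Phi(A \times \{t\}) \subseteq \Omega$, and the lateral piece $L_t = \Phi(\partial A \times (0, t)) \subseteq \Omega$. The classical formula for the area element of parallel hypersurfaces (equivalent to a first-order expansion compatible with \eqref{e:mean curvature}) yields
\begin{equation*}
\hm^{n-1}(B_t) \leq (1 - t\kappa_1) \hm^{n-1}(A) + O(t^2)
\quad \text{and} \quad
\hm^{n-1}(L_t) \leq C t\, \hm^{n-2}(\partial A).
\end{equation*}
I then take the competitor $u_\ast \coloneqq a\chi_{U_t} + b\chi_{\Omega \setminus U_t} \in \bv(\Omega; \{a,b\})$, whose interior trace on $\partial\Omega$ equals $a$ on $A$ and $b$ elsewhere, so that
\begin{equation*}
\mathcal{F}^{(1)}(u_\ast) - \mathcal{F}^{(1)}(b)
=
C_W\bigl(\hm^{n-1}(B_t) + \hm^{n-1}(L_t)\bigr)
-
\int_A \bigl(\dw(b, g) - \dw(a, g)\bigr) d\hm^{n-1}.
\end{equation*}
Combining with the estimates on $\dw$ and the surface areas of $B_t$ and $L_t$, this is bounded above by $(-C_W t \kappa_1 + 2\eta)\hm^{n-1}(A) + C_W C t\, \hm^{n-2}(\partial A) + O(t^2)$.

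To close the argument I choose $A$ to be a small geodesic disc around $x_0$ of radius $r$, so that $\hm^{n-1}(A) \sim r^{n-1}$ and $\hm^{n-2}(\partial A) \sim r^{n-2}$. Picking $r$ small enough that $A \subseteq \{\kappa \geq \kappa_1\}$, then $t \ll r$, and finally $\eta$ small enough relative to $t \kappa_1$ makes the right-hand side strictly negative, contradicting the uniqueness of $b$ as the minimizer of $\mathcal{F}^{(1)}$. The main obstacle I expect is the careful bookkeeping of the three surface pieces of $\partial U_t$: the lateral piece $L_t$ vanishes only at the linear rate $t$ and cannot be absorbed into the $O(t^2)$ remainder of the parallel-surface expansion, so I must instead exploit the isoperimetric scaling $\hm^{n-2}(\partial A)/\hm^{n-1}(A) \to 0$ as $r \to 0$ combined with $t \ll r$ in order to dominate it by the curvature gain on $B_t$.
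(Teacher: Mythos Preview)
Your argument has a genuine gap in the final step. You claim that the isoperimetric ratio $\hm^{n-2}(\partial A)/\hm^{n-1}(A) \to 0$ as $r \to 0$, but for a geodesic disc in the $(n-1)$-dimensional manifold $\partial\Omega$ one has $\hm^{n-1}(A)\sim r^{n-1}$ and $\hm^{n-2}(\partial A)\sim r^{n-2}$, so the ratio behaves like $1/r \to \infty$. Consequently the lateral piece $L_t$, whose perimeter contribution is of order $t\,r^{n-2}$, \emph{dominates} the curvature gain from $B_t$, which is only of order $t\,\kappa_1 r^{n-1}$, whenever $r$ is small. Your energy difference is then bounded below by something like $t(C r^{n-2}-C_W\kappa_1 r^{n-1})+2\eta(r)r^{n-1}$, which is positive for small $r$ regardless of how you tune $t$ and $\eta$. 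The cylindrical competitor $U_t=\Phi(A\times(0,t))$ therefore cannot close the contradiction.

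The paper (via \cite[Thm.~4.1]{fonseca2025secondordergammalimitcahnhilliardfunctional}) sidesteps this by using a competitor whose interior boundary is a single cap $B$ with $\partial B=\partial A$, so that there is no lateral piece at all; positive mean curvature then gives $\hm^{n-1}(B)<\hm^{n-1}(A)$ directly. A second ingredient you do not exploit is the $C^2$ regularity of $g$: since $g(x_0)=a$ is a minimum, $\nabla_\tau g(x_0)=0$ and hence $|g-a|\lesssim |x'|^2$ near $x_0$, which together with the subquadratic growth yields $\dw(a,g)=O(|x'|^{3+q})$. This is a much sharper control on the boundary-term loss than the uniform bound $\dw(a,g)<\eta$ you obtain from mere continuity, and it is what allows the perimeter gain to absorb the boundary cost in the referenced argument.
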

\begin{proof}
    By recalling the proof of \cite[Thm.~4.1]{fonseca2025secondordergammalimitcahnhilliardfunctional}, we see that the first difference of the subquadratic case compared to the quadratic case occurs when computing 
    \begin{align*}
        \dw ( b, \bar{g} ( x' ) ) - \dw ( a , \bar{g} ( x' ) )
        & =
        C_{\mathrm{W}} - 4 \int_a^{\bar{g} ( x' ) } 
        \sqrt{W (\rho)}  \dd{ \rho }
        \\
        &\leq
        C_{\mathrm{W}} - C \int_a^{\bar{g} ( x' ) } (\rho - a )^{ (1+q)/2} \dd{ \rho }
        \\
        & \leq
        C_{\mathrm{W}} - C ( \bar{g} ( x' ) - a ) ^{(3+q)/2}.
    \end{align*}
    If $ \bar{g} ( 0 ) = a $, then $ \nabla \bar{g} ( 0 ) = 0 $. By using $ g \in C^2 ( \partial \Omega )$, we get 
    \begin{equation*}
        \abs{ \bar{g} ( x' ) - a } \leq \norm{\bar{g}}_{C^2} \abs{x'}^2,
    \end{equation*}
    thus
    \begin{equation*}
        \dw ( b , \bar{g} ( x' ) ) - \dw ( a , \bar{g} ( x ' ))
        \leq 
        C_{\mathrm{W}} + \mathcal{O} ( \abs{x'}^{3+q} ).
    \end{equation*}
    Since $ q \in (0,1)$, the rest of the proof now works exactly the same.
\end{proof}
Using a simple compactness argument, we deduce the uniform version of the above proposition if we already assume the slightly stronger assumption $ \{ g = a \} \subseteq \{ \kappa < 0 \}$.
\begin{proposition}
\label{cor:boundary_data_curvature_relation}
    Assume that $ \partial \Omega $ is of class $ C^2 $, $ g $ is continuous and that $ \{ g = a \} \subseteq \{ \kappa < 0 \}$. Then there exists $ \kappa_0 < 0 $ and $ \alpha_- \in (a, b] $ such that
    \begin{equation*}
        \{ g \leq \alpha_- \}
        \subseteq 
        \{ \kappa < \kappa_0 \}.
    \end{equation*}
\end{proposition}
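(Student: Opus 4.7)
The plan is to argue by contradiction, exploiting the compactness of $\partial \Omega$ together with the continuity of $g$ and $\kappa$.

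Suppose the conclusion fails. Then no pair $(\kappa_0, \alpha_-) \in (-\infty, 0) \times (a, b]$ works, and in particular, taking $\alpha_- = a + 1/n$ and $\kappa_0 = -1/n$, I obtain for every $n \in \mathbb{N}$ a point $y_n \in \partial \Omega$ with
\begin{equation*}
    g(y_n) \leq a + \tfrac{1}{n}
    \quad \text{and} \quad
    \kappa(y_n) \geq -\tfrac{1}{n}.
\end{equation*}
Since $\partial \Omega$ is compact (as $\Omega$ is bounded with $C^2$ boundary), a subsequence $(y_{n_k})$ converges to some $y_\infty \in \partial \Omega$.

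By the continuity of $g$, passing to the limit gives $g(y_\infty) \leq a$; since $g$ takes values in $[a,b]$, this forces $g(y_\infty) = a$, so $y_\infty \in \{g = a\}$. By the assumption $\{g = a\} \subseteq \{\kappa < 0\}$, we get $\kappa(y_\infty) < 0$. On the other hand, the continuity of $\kappa$ (which holds because $\partial \Omega \in C^2$) combined with $\kappa(y_{n_k}) \geq -1/n_k$ yields $\kappa(y_\infty) \geq 0$, contradicting the previous inequality.

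I expect no real obstacle in this argument; the only point requiring a brief justification is the continuity of the scalar mean curvature $\kappa$ on $\partial \Omega$, which is immediate from $\partial \Omega \in C^2$. The structure of the proof is a standard sequential compactness argument, so it will be short.
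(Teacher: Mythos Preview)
Your argument is correct and is essentially the same compactness argument as the paper's, which constructs $\alpha_-$ and $\kappa_0$ directly via the nested compact sets $K_n=\{g\le a+1/n\}$ and the open exhaustion $U_n=\{\kappa<-1/n\}$ rather than using sequential compactness by contradiction. Both proofs rest on nothing more than the compactness of $\partial\Omega$ and the continuity of $g$ and $\kappa$, so the difference is purely stylistic.
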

\begin{proof}
    For $ n \in \N $ define 
    \begin{equation*}
        K_n \coloneqq \{ y \in \partial \Omega \colon g ( y ) \leq a + \frac{1}{n} \}.
    \end{equation*}
    $ K_n $ is a decreasing sequence of compact sets such that $ \bigcap K_n = \{ g = a \} \subseteq \{ \kappa < 0 \}$. Since the right-hand side in this inclusion is an open subset of $ \partial \Omega$, we must find $ m\in \N $ such that $ K_m \subseteq \{ \kappa < 0 \} $. Now define the open sets
    \begin{equation*}
        U_n \coloneqq \{ y \in \partial \Omega \colon \kappa < -\frac{1}{n} \}.
    \end{equation*}
    Then $ K_m \subseteq \bigcup U_n $, and by compactness of $ K_m $ we deduce that there must exist $ l \in \N $ such that $ K_m \subseteq U_l$. Setting $ \alpha_- = a+1/m $ and $ \kappa_0 = -1/l $ finishes the proof.
\end{proof}

\subsection{Upper bound inequality for the second order functionals}

As mentioned before, our key assumption is that the constant function $ b $ is the unique minimizer of the first order $ \Gamma $-limit $ \mathcal{F}^{(1)}$, which has been computed in \Cref{p:first order gamma limit multidim}.
Consequently we define for every $\eps\in (0,\infty)$ the functional $\mathcal{F}^{(2)}_{\eps}:\mathrm{L}^1(\Omega)\to [0,\infty]$ by 
\begin{equation*}
\mathcal{F}^{(2)}_{\eps}(u)=
\begin{cases}
    \int_{\Omega} \frac{W(u)}{\eps^2}+|\nabla u|^2 \dd{x}-\frac{1}{\eps}\int_{\partial \Omega}\textup{d}_W(b,g)\dd{ \mathcal{H}^{n-1} }, & \textup{if $u\in \h^1(\Omega)$ and $u=g_{\eps}$ on $\partial \Omega$}, \\
    \infty, & \textup{otherwise}.
\end{cases}
\end{equation*}
We will start by showing the construction for the recovery sequence. The idea is to use the same construction as in the one-dimensional case along directions of the inner unit normal.

\begin{theorem}
\label{prop:recovery_seq_second_order}
Assume that $W$ is such that $u_0:\equiv b$ is the unique minimizer of $\mathcal{F}^{(1)}$. Then there exists a sequence $(u_{\eps})_\eps$ in $\mathrm{L}^1(\Omega)$ such that 
\begin{equation*}
    \limsup_{\eps\to 0}\mathcal{F}^{(2)}(u_{\eps})\leq
    \int_{ \partial \Omega }
    - \kappa ( y ) 
    \int_0^\infty \dw ( z_{g(y)} ( s ) , b ) \dd{ s }
    \dd{ \hm^{n-1} ( y )}
    \eqqcolon 
    G^{(2)} ( b )
\end{equation*}
and $u_{\eps}\to u_0$ in $\mathrm{L}^1(\Omega)$ as $\eps\to 0$.
\end{theorem}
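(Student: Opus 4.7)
The plan is to reduce to the one-dimensional weighted problem of \Cref{sct:1_dim} by slicing along inward normals and applying the one-dimensional limsup construction pointwise in $y \in \partial \Omega$. Fix $\delta > 0$ small enough that $\Phi \colon \partial \Omega \times (0, 2\delta) \to \R^n$ from \Cref{p:The Phi} is a $C^1$-diffeomorphism onto its image, and pick $T \in (0, \delta]$ so small that the slice weight $\omega_y(t) \coloneqq \det J_\Phi(y,t)$ satisfies the assumptions of \Cref{sct:1_dim} uniformly in $y$: in particular $\omega_y(0) = 1$, $\omega_y'(0) = -\kappa(y)$, and by (\ref{eq:uniform_inclusion}) the estimate (\ref{eq:weight_lower_bound}) holds whenever $g_\eps(y) \leq \alpha_-$. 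Define the candidate recovery sequence
\begin{equation*}
u_\eps(x) \coloneqq \begin{cases} z_{g_\eps(y)}(t/\eps), & x = \Phi(y,t), \ y \in \partial \Omega, \ t \in [0, T], \\ b, & x \in \Omega \setminus \Phi(\partial \Omega \times [0, T]). \end{cases}
\end{equation*}
Since $z_\alpha \equiv b$ on $[T^{(\alpha)}, \infty)$ and $T^{(g_\eps(y))}$ stays uniformly bounded in $y$, the function $u_\eps$ belongs to $H^1(\Omega)$ for $\eps$ small, satisfies $u_\eps = g_\eps$ on $\partial \Omega$, and converges to $b$ in $L^1(\Omega)$ since the transition layer has volume $O(\eps)$.

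Working in Fermi coordinates around $\partial \Omega$, the Euclidean metric is block-diagonal with unit $(\partial_t)$-component, so that $|\nabla u_\eps \circ \Phi|^2$ splits into a normal part $(\partial_t \tilde u_\eps)^2$ and a tangential part which, for $t$ small, is comparable to the tangential gradient squared of $\tilde u_\eps = u_\eps \circ \Phi$ on $\partial \Omega$. Applying the area formula on $\Omega_\delta$ yields
\begin{equation*}
\mathcal{F}_\eps^{(2)}(u_\eps) = \int_{\partial \Omega} \left( G^{(2)}_{\eps,y}(v_{\eps,y}) + \frac{\dw(b, g_\eps(y)) - \dw(b, g(y))}{\eps} \right) \dd{\hm^{n-1}(y)} + \mathcal{R}_\eps,
\end{equation*}
where $v_{\eps, y}(t) = z_{g_\eps(y)}(t/\eps)$ on $[0, T]$ is precisely the one-dimensional recovery ansatz for the functional $G^{(2)}_{\eps, y}$ with weight $\omega_y$, boundary datum $\alpha_\eps = g_\eps(y)$, and $\beta_\eps = b$, and $\mathcal{R}_\eps$ collects the tangential gradient contribution. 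Differentiating $\Psi_\alpha(z_\alpha(s)) = s$ with respect to $\alpha$ yields $\partial_\alpha z_\alpha(s) = \sqrt{W(z_\alpha(s))}/\sqrt{W(\alpha)}$, and the substitution $\rho = z_{g_\eps(y)}(t/\eps)$ together with the identity $\int_0^{T^{(\alpha)}} W(z_\alpha(s)) \dd{s} = \int_\alpha^b \sqrt{W(\rho)} \dd{\rho}$ gives
\begin{equation*}
\mathcal{R}_\eps \leq C \eps \int_{\partial \Omega} \frac{|\nabla_\tau g_\eps(y)|^2}{W(g_\eps(y))} \dd{\hm^{n-1}(y)} + o(1),
\end{equation*}
which vanishes by (\ref{eq:ass_tang_deriv_squared}); the middle term in the display above vanishes by (\ref{eq:geps_to_g}).

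It remains to pass to the limit in the slice integral. The one-dimensional limsup proposition of \Cref{sct:1_dim} applied on each slice gives, for $\hm^{n-1}$-a.e.\ $y \in \partial \Omega$,
\begin{equation*}
\limsup_{\eps \to 0} G^{(2)}_{\eps, y}(v_{\eps, y}) \leq -\kappa(y) \int_0^\infty \dw(z_{g(y)}(s), b) \dd{s}.
\end{equation*}
Tracking the explicit terms $\mathcal{A}_\eps$--$\mathcal{D}_\eps$ in that proof (noting that since $\beta_\eps = b$ exactly, the $\mathcal{D}_\eps$-type right-transition contribution drops), we obtain a uniform upper bound of the form $G^{(2)}_{\eps, y}(v_{\eps, y}) \leq C(1 + |\kappa(y)|) + C \dw(g_\eps(y), g(y))/\eps$, whose right-hand side is dominated in $L^1(\partial \Omega)$ thanks to (\ref{eq:geps_to_g}) and the $C^2$-regularity of $\partial \Omega$. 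Reverse Fatou then yields
\begin{equation*}
\limsup_{\eps \to 0} \int_{\partial \Omega} G^{(2)}_{\eps, y}(v_{\eps, y}) \dd{\hm^{n-1}(y)} \leq \int_{\partial \Omega} -\kappa(y) \int_0^\infty \dw(z_{g(y)}(s), b) \dd{s} \dd{\hm^{n-1}(y)},
\end{equation*}
which is the asserted bound. The main technical difficulty is producing the uniform-in-$y$ domination so that reverse Fatou can be applied, and this is exactly why the integrability hypotheses (\ref{eq:geps_to_g}) and (\ref{eq:ass_tang_deriv_squared}) on the boundary data are needed.
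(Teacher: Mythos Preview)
Your proposal is correct and follows essentially the same route as the paper: the recovery sequence is $u_\eps(\Phi(y,t)) = z_{g_\eps(y)}(t/\eps)$ extended by $b$, the energy is sliced via the area formula into a normal part (the one-dimensional weighted functional) and a tangential remainder controlled by the identity $\partial_\alpha z_\alpha(s) = \sqrt{W(z_\alpha(s))}/\sqrt{W(\alpha)}$ together with assumption~(\ref{eq:ass_tang_deriv_squared}), and the boundary-mismatch term is handled by~(\ref{eq:geps_to_g}). The only presentational difference is that the paper Taylor-expands $\omega_y$ directly and applies dominated convergence to the curvature term, whereas you invoke the one-dimensional limsup proposition slice-by-slice and then reverse Fatou with a uniform majorant; both packagings are fine. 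One minor bookkeeping slip: in your displayed identity for $\mathcal{F}_\eps^{(2)}(u_\eps)$ the correction $(\dw(b,g_\eps)-\dw(b,g))/\eps$ is redundant, since with $\omega_y(0)=1$ and $\alpha = g(y)$ the slice functional $G^{(2)}_{\eps,y}$ already subtracts $\dw(b,g(y))/\eps$ (or, if you meant the variant subtracting $\dw(b,g_\eps(y))/\eps$, the sign is right but this is not the paper's normalization). Either way the term vanishes and the argument is unaffected.
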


\begin{proof}
    The construction is similar as in \cite{fonseca2025secondordergammalimitcahnhilliardfunctional1}, but we can simplify some steps due to the local integrability of $ W^{-1/2}$.
    Define $ \Psi_\alpha $ as in equation (\ref{eq:def_Psialpha}) and let $ T_\eps ( y ) \coloneqq \eps \Psi_{g_\eps ( y ) } ( b ) $. Note that there exists a constant $ C > 0 $ such that $ T_\eps ( y ) \leq C \eps $ for all $ y \in \partial \Omega $ due to $ g_\eps (y ) \in [a,b]$. Then let
    \begin{equation*}
        v_\eps (y, \cdot ) \colon
        [ 0, T_\eps ( y ) ] \to [ g_\eps ( y ) , b ]
    \end{equation*}
    be defined as $ z_{g_\eps ( y ) } ( \cdot / \eps )$, which satisfies  $ v_\eps ( y , 0 ) = g_\eps ( y ) $, $ v_\eps ( y , T_\eps ( y ) ) = b $ and
    \begin{equation*}
        \partial_t v_\eps ( y , t ) 
        =
        \frac{ \sqrt{ W ( v_\eps ( y, t ) ) } }{ \eps }.
    \end{equation*}
    Moreover we denote, as in Lemma \ref{lem:conv_of_rescaled}, by $ w_\eps ( y, s ) \coloneqq v_\eps ( y, \eps s )$ the rescaling of $v_\eps $. We first assume that $ g_\eps $ is continuously differentiable. Then by the inverse function theorem, so is $ v_\eps $.
    We extend $ v_\eps ( y, t ) $ to be equal to $ b $ for $ t \geq T_\eps ( y ) $. To understand the tangential derivatives of $ v_\eps$, we note that for every $ y \in \partial \Omega, $ we have
    $ v_\eps ( y, \Psi_\eps ( y , r ) ) = r $. Let $ z $ be a tangent vector to $ y $ with respect to $ \partial \Omega $. By differentiating the equality, we obtain
    \begin{equation}
    \label{eq:est_for_tang_der_veps}
        \partial_z v_{\eps}( y, \eps\Psi_{ g_\eps ( y ) } ( r ) ) =- \partial_t v_\eps ( y, \eps\Psi_{g_\eps} (y, r ) ) \eps \partial_z (\Psi_{g_\eps ( y )} ( r ) ) 
        =
        \frac{ \sqrt{ W ( v_\eps ( \Psi_{g_\eps (y)} ( r ) ) ) }}{ \sqrt{ W ( g_\eps ( y ) ) }} \partial_z g_\eps ( y ).
    \end{equation}
    Note that if $ g_\eps(y) \in \{a,b\}$, then $g_\eps $ already assumes an extremum in $ x $ and thus $ \partial_z g_\eps =0 $ so that the above expression is set to zero in those cases.
    By a density argument (both approximating $ g $ via continuously differentiable $ g_n $ and $ 1 /\sqrt{W}$ by $ 1/\sqrt{W + 1/n}$), equality (\ref{eq:est_for_tang_der_veps}) still holds $\mathcal{H}^{n-1}$-a.e. if we assume only $ g_\eps \in \h^1 ( \partial \Omega )$ since via assumption, (\ref{eq:est_for_tang_der_veps}) is square integrable, see also estimate (\ref{eq:estim_for_B}) below. We finally set 
    \begin{equation*}
        u_\eps ( x )
        \coloneqq
        \begin{cases}
            v_\eps ( \Phi^{-1 } ( x ) ), & \text{if }x \in \Omega_\delta,
            \\
            b, & \text{else},
        \end{cases}
    \end{equation*}
    with $\delta$ and $\Phi$ as in Proposition \ref{p:The Phi}. Let $ y \colon \Omega_\delta \to \partial \Omega $ be the projection onto the boundary. 
    From $ \Phi^{-1 } ( x ) = (y( x ) , \mathrm{dist} ( x, \partial \Omega ) ) $ and $ \abs{ \nabla \mathrm{dist} } \leq 1  $, we deduce that
    \begin{equation*}
        \abs{ \nabla u_\eps ( x ) }^2
        \leq
        \abs{ \partial_t v_\eps ( \Phi^{-1} ( x ) ) }^2
        +
        C \norm{ \nabla y }_{ L^\infty}^2
        \abs{ \nabla_\tau v_\eps ( \Phi^{-1} ( x ) ) }^2,
    \end{equation*}
    where $ \nabla_\tau $ denotes the tangential derivative. We define $ \omega ( y, t ) \coloneqq \det J_\Phi ( y, t )$ and, by the Area Formula and Fubini's Theorem, we obtain the estimate
    \begin{align*}
        \mathcal{F}_\eps^{(2 ) } ( u_\eps )
         ={} &
        \int_{ \partial \Omega }
        \int_0^{\delta } 
        \left( \frac{1 }{\eps^2} W ( v_\eps ( y, t ) ) + \abs{ \nabla u_\eps ( \Phi ( y, t ) ) }^2 
        \right) \omega ( y, t ) 
        \dd{ t } 
        \dd{ \mathcal{H}^{N-1} (y )}
        -
        \int_{ \partial \Omega } \frac{\dw( g ( y ) , b ) }{\eps}
        \dd{ \mathcal{H}^{N- 1 } ( y ) }
        \\
        \leq{} &
        \int_{ \partial \Omega }
        \int_0^\delta 
        \left( \frac{ 1 }{\eps^2 } W ( v_\eps ( y, t ) ) + \abs{ \partial_t v_\eps ( y, t ) }^2 \right)
        \omega( y, t ) 
        \dd{ t }
        -
        \frac{ 1 }{\eps } 
        \dw ( g ( y ) , b )
        \dd{ \mathcal{H}^{N- 1 } ( y ) }
        \\
        &+
        C \norm{ \nabla y }_{ \lp^\infty ( \Omega_\delta ) }^2
        \int_{ \partial \Omega } \int_0^\delta 
        \abs{ \nabla_\tau v_\eps ( y, t ) }^2 
       \omega(y,t) \dd{ t } \dd{ \mathcal{ H }^{N-1} ( y ) }
        \eqqcolon \mathcal{ A } + \mathcal{ B }.
    \end{align*}
    For $ \mathcal{A}$, we apply the methods of the one-dimensional case and write
    \begin{align}
    \label{eq:A_first_summand}
        \mathcal{A}
        ={} &
        \int_{ \partial \Omega } 
        \int_0^{T_\eps ( y ) }
        \frac{1}{\eps^2} W ( v_\eps ( y , t ) ) + \abs{ \partial_t v_\eps ( y, t ) }^2 \dd{ t }
        -
        \frac{1}{\eps} \mathrm{d}_W ( g( y ), b )
        \dd{ \mathcal{H}^{N-1} ( y ) }
        \\
        \label{eq:A_second_summand}
        & +
        \int_{ \partial \Omega }
        -\kappa ( y ) \int_0^{T_\eps ( y ) / \eps }
        2\sqrt{ W ( w_\eps ( y, s ) ) }  \partial_s w_\eps ( y, s )
        s
        \dd{ s }
        \dd{ \hm^{n-1}}
        \\
        \label{eq:A_third_summand}
        & +
        \int_{ \partial \Omega }
        \int_0^{T_\eps ( y ) } 
        \left( 
        \frac{1}{\eps} W ( v_\eps ( y, t ) ) + \eps ( \partial_t v_\eps ( y, t ) )^2
        \right)
        \frac{ R( t ) }{\eps }
        \dd{ t }
        \dd{ \mathcal{H}^{N-1} ( y ) }.
    \end{align}
    The last summand (\ref{eq:A_third_summand}) vanishes since $ T_\eps ( y ) \leq C \eps $, $ \max_{[0,C\eps ]}\abs{R}/\eps \to 0 $ as $ \eps \to 0 $ and by the uniform energy bound proven in \Cref{prop:minimizer_properties}.
    For the second summand (\ref{eq:A_second_summand}), we note that $T_\eps ( y ) /\eps \to T_{g( y ) }$ almost everywhere. 
    Additionally using \Cref{p:upper bound derivative} to get $ \partial_s w_\eps \leq C $ and $ T_\eps (y ) /\eps \leq C $, we obtain by dominated convergence and integration by parts that the second summand converges to
    \begin{align*}
    &\lim_{\eps \to 0}
    \int_{ \partial \Omega }
        -\kappa ( y ) \int_0^{T_\eps ( y ) / \eps }
        2\sqrt{ W ( w_\eps ( y, s ) ) }  \partial_s w_\eps ( y, s )
        s
        \dd{ s }
        \dd{ \hm^{n-1} (y)}
        \\
        ={}&
        \int_{ \partial \Omega} 
        -\kappa ( y ) 
        \int_{0}^{T_g( y ) }
        2\sqrt{ W ( z_{g( y ) } ( s )) }
        z_{ g( y ) }' ( s ) s 
        \dd{ s }
        \dd{ \hm^{n-1}(y) }
        \\
        ={}&
        \int_{ \partial \Omega }
        -\kappa ( y ) \int_0^\infty \dw ( z_{g( y ) }(s),b)
        \dd{ s }
        \dd{ \hm^{n -1} ( y ) }.
    \end{align*}
    For the first summand (\ref{eq:A_first_summand}), we note that it is estimated from above by
    \begin{equation*}
        \int_{ \partial \Omega }
        \frac{ \abs{\mathrm{d}_W ( g_\eps ( y ) , b ) - \mathrm{d}_W ( g( y ) , b ) } }{ \eps }
        \dd{ \mathcal{H}^{N-1} ( y ) }
        =
        \int_{ \partial \Omega }
        \frac{ \mathrm{d}_W ( g_\eps( y ) , g( y ) ) }{ \eps }
        \dd{ \mathcal{ H}^{ N -1 } ( y ) },
    \end{equation*}
    which vanishes by assumption (\ref{eq:geps_to_g}). 

    For the error term $ \mathcal{B}$, we lastly note that by inequality (\ref{eq:est_for_tang_der_veps}), it can be estimated by
    \begin{align}
     \label{eq:estim_for_B}
        \mathcal{B}
        & \lesssim
        \int_{ \partial \Omega }
        \frac{\abs{ \nabla_\tau g_\eps ( y ) }^2}{W(g_\eps(y))} \int_0^{\delta } W (v_\eps (y,t) ) \dd{t} \dd{ \mathcal{H}^{N-1} (y)}
        \lesssim
        \eps
        \int_{ \partial \Omega }
        \frac{\abs{ \nabla_\tau g_\eps ( y ) }^2 }{W(g_\eps ( y ) ) }
        \dw( g_\eps (y ) , b ) 
        \dd{ \mathcal{H}^{N-1}( y ) }.
    \end{align}
    This vanishes as $ \eps \to 0 $ by assumption (\ref{eq:ass_tang_deriv_squared}), which finishes the proof.
\end{proof}
We now turn to the lower bound. In the one-dimensional case, we were able to precisely predict the short-time behaviour of minimizers of the weighted functional by studying the corresponding variational equation. While much harder in higher dimensions, a similar idea still applies. We first consider the corresponding Euler--Lagrange equation.

\begin{lemma}
Let $\eps>0$ and $u_{\eps}\in \argmin\mathcal{F}_{\eps}^{(0)}$. Then $u_{\eps}\in C^2(\Omega)$, $a\leq u_{\eps} \leq b$ and
\begin{equation}\label{e:Euler-Lagrange multidimensional}
    \eps^2 \Delta u(x)=W'(u(x))   
\end{equation}
for every $x\in \Omega$. Moreover, there exists a constant $C$, that depends only on $n$, $ \Omega $ and $W$, such that  
\begin{equation}\label{e:L^inf bound of the gradient}
    |\nabla u(x)| \leq \frac{C}{\eps}
\end{equation}
for every $x\in \Omega$.
\end{lemma}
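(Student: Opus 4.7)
The statement bundles four assertions: the two-sided bound $a \le u_\eps \le b$, the Euler--Lagrange PDE, $C^2$-regularity, and the gradient bound. I would treat them in this order, closely following the one-dimensional template of \Cref{prop:minimizer_properties} and \Cref{p:upper bound derivative}.

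For $a \le u_\eps \le b$, I would truncate: $u_\eps^\ast \coloneqq (u_\eps \wedge b) \vee a$ still lies in $H^1(\Omega)$ with boundary trace $g_\eps$ (which by assumption takes values in $[a,b]$), and since $W \ge 0$ vanishes at $a,b$ and $|\nabla u_\eps^\ast| \le |\nabla u_\eps|$ a.e., we have $\mathcal{F}_\eps^{(0)}(u_\eps^\ast) \le \mathcal{F}_\eps^{(0)}(u_\eps)$; minimality then forces equality a.e. Next, outer variations with $\phi \in C_c^\infty(\Omega)$ yield
\[
\int_\Omega \bigl( W'(u_\eps)\phi + 2\eps^2 \nabla u_\eps \cdot \nabla \phi \bigr) \dx = 0,
\]
the distributional form of (\ref{e:Euler-Lagrange multidimensional}). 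For the regularity, I would bootstrap: since $u_\eps \in L^\infty \cap H^1$ and $W'$ is continuous on $[a,b]$, interior $W^{2,p}$-estimates give $u_\eps \in W^{2,p}_{\mathrm{loc}}(\Omega)$ for every $p<\infty$, hence $u_\eps \in C^{1,\alpha}_{\mathrm{loc}}$ by Morrey's embedding; using $W' \in C^{0,q}_{\mathrm{loc}}$ from \Cref{p:prop W} we then get $W'(u_\eps) \in C^{0,\alpha q}_{\mathrm{loc}}$, and Schauder estimates lift $u_\eps$ to $C^{2,\alpha q}_{\mathrm{loc}}(\Omega)$, so the PDE holds pointwise.

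For the gradient bound, the natural route is an $\eps$-rescaling: for $x_0 \in \Omega$ set $v(y) \coloneqq u_\eps(x_0 + \eps y)$ on the largest ball $B_R(0)$ contained in the rescaled domain. The function $v$ takes values in $[a,b]$ and satisfies $2 \Delta v = W'(v)$, whose right-hand side is uniformly bounded on $[a,b]$. Standard interior elliptic theory (e.g.\ $W^{2,p}$-estimates combined with Morrey) yields $\|\nabla v\|_{L^\infty(B_{R/2})} \le C$ depending only on $n$ and $\|W'\|_{L^\infty([a,b])}$, so that $|\nabla u_\eps(x_0)| \le C/\eps$ whenever $\mathrm{dist}(x_0, \partial\Omega) \gtrsim \eps$. \textbf{The main obstacle} is extending this uniform bound up to $\partial\Omega$. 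For points within distance $\eps$ of $\partial\Omega$ the rescaled ball no longer fits inside the domain, and one has to flatten $\partial\Omega$ locally (admissible by the $C^2$-regularity of $\partial\Omega$) and invoke boundary $C^{1,\alpha}$-Schauder estimates; the resulting constants generally depend on the regularization $g_\eps$, so either the statement should be understood as an interior estimate on compactly contained subdomains (which suffices for the slicing applications in \Cref{sct:multidim_case}) or a further pointwise hypothesis on $g_\eps$ compatible with (\ref{eq:ass_tang_deriv_squared}) is needed.
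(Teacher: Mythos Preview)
Your argument runs parallel to the paper's: truncation for the range bound, outer variations for the weak Euler--Lagrange equation, and an elliptic bootstrap for $C^2$-regularity. The only cosmetic difference is that the paper reaches the initial H\"older continuity via De Giorgi's theorem rather than your $W^{2,p}$-plus-Morrey route; both feed into the same Schauder step using $W'\in C^{0,q}_{\mathrm{loc}}$ from \Cref{p:prop W}.

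For the gradient bound the paper does not carry out the rescaling explicitly but simply invokes \cite[Lem.~A.1, Lem.~A.2]{bethuel_brezis_helein_93_asymptotics_for_ginzburg_landau}, which is exactly the $\eps$-rescaling-plus-interior-estimate argument you sketch; Lemma~A.2 there is the boundary version, so the obstacle you flag is handed off to that reference rather than being worked out in the paper. Your observation that the boundary constant in principle depends on $g_\eps$ is fair and is not addressed explicitly in the paper's proof; in practice the gradient bound is only consumed on interior sets (via \Cref{p:main estimate liminf}), so your suggested reading as an interior estimate is adequate for all downstream uses.
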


\begin{proof}
By a simple truncation argument we have that $a\leq u(x)\leq b $ for $\calL^{n}$-a.e. $x\in \Omega$. From the De Giorgi's Theorem we have that there exists $\gamma \in (0,1)$ such that $u\in C^{0,\gamma}_{\textup{loc}}(\Omega)$ and, thanks to Proposition \ref{p:prop W}, $W'(u)\in C^{0,q\gamma}_{\textup{loc}}(\Omega)$. Moreover, considering outer variations of $u$, we can deduce that $u$ satisfies the following Euler--Lagrange equation
  \begin{equation*}
      \eps^2 \textup{div}(\nabla u)=W'(u)
  \end{equation*}
in a distributional sense. From the regularity theory of the Poisson equation we obtain that $u\in C^{2,q\gamma}_{\textup{loc}}(\Omega)$ and hence $u$ satisfies \eqref{e:Euler-Lagrange multidimensional} in a strong sense. Equation \eqref{e:L^inf bound of the gradient} is a direct consequence of $a\leq u\leq b$ and \cite[Lem.~A.1, Lem.~A.2]{bethuel_brezis_helein_93_asymptotics_for_ginzburg_landau}.
\end{proof}

The following Proposition is the multi-dimensional equivalent of \Cref{lem:properties_of_1d_minimizers} respectively \Cref{lem:properties_1d_mins_close_to_a} and tells us that minimizers of $ \mathcal{F}_\eps^{(0)} $ make a sharp transition from $ g_\eps $ to $ b $ close to the boundary. It relies on the work \cite[Prop.~4.1]{sternberg_zumbrun_98_connectivity_of_phase_boundaries} and \cite{caffarelli_cordoba_95_unfirom_convergence_of_a_singular_perturbation_problem} and was proven in \cite[Thm.~4.9]{fonseca2025secondordergammalimitcahnhilliardfunctional1}.

\begin{lemma}\label{p:main estimate liminf}
Assume that $u_0 \coloneqq b$ is the unique minimizer of the functional $\mathcal{F}^{(1)}$. Then there exist two constants $C,\mu >0$ such that for every $\delta>0$ small enough and every $\eps> 0$ sufficiently small, depending on $\delta$, we have
\begin{equation*}
    b-u_{\eps}(x)\leq Ce^{-\mu\delta/\eps} \quad \textup{for every $x\in \Omega \setminus \Omega_{2\delta}$}
\end{equation*}
for any $u_{\eps}\in \argmin\mathcal{F}^{(1)}_{\eps}$.
\end{lemma}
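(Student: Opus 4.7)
The strategy, following \cite[Thm.~4.9]{fonseca2025secondordergammalimitcahnhilliardfunctional1}, proceeds in two stages. First, we upgrade the $\lp^1$-convergence $u_\eps \to u_0 \equiv b$ provided by \Cref{p:first order gamma limit multidim} to uniform $\lp^\infty$-closeness on subsets away from $\partial \Omega$ via the Caffarelli--C\'ordoba uniform convergence result. Second, we propagate this closeness into exponential decay by a maximum principle comparison with an explicit barrier for the Euler--Lagrange equation \eqref{e:Euler-Lagrange multidimensional}.

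For the first stage, by \Cref{p:first order gamma limit multidim} and the assumption that $u_0 \equiv b$ is the unique minimizer of $\mathcal{F}^{(1)}$, any sequence of minimizers $u_\eps \in \argmin \mathcal{F}^{(1)}_\eps$ converges to $b$ in $\lp^1(\Omega)$. The result of Caffarelli--C\'ordoba \cite{caffarelli_cordoba_95_unfirom_convergence_of_a_singular_perturbation_problem} on uniform convergence of level sets in singularly perturbed problems then yields that for every $c \in (a,b)$ and every $\delta > 0$, $u_\eps > c$ on $\overline{\Omega \setminus \Omega_{3\delta/2}}$ once $\eps$ is small enough, with the smallness depending on $\delta$ and $c$. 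Fix a small universal $\eta_0 > 0$ depending only on $W$ (to be chosen below) and take $c = b - \eta_0$, so that $u_\eps \geq b - \eta_0$ on $\overline{\Omega \setminus \Omega_{3\delta/2}}$ for all $\eps \in (0,\eps_0(\delta))$.

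For the second stage, set $w_\eps \coloneqq b - u_\eps \geq 0$, which satisfies $\eps^2 \Delta w_\eps = f(w_\eps)$ for $f(s) \coloneqq -W'(b-s)/2$. By \Cref{p:prop W}, shrinking $\eta_0$ if necessary, we may assume that $f$ is nondecreasing on $[0,\eta_0]$ and that $f(s) \geq \gamma s^q$ there, for some $\gamma > 0$. On $U \coloneqq \Omega \setminus \overline{\Omega_{3\delta/2}}$ we introduce the barrier
\begin{equation*}
    \psi_\eps(x) \coloneqq \eta_0 \exp\!\left( -\tfrac{\mu}{\eps}\,\mathrm{dist}(x,\partial U) \right),
\end{equation*}
with a universal $\mu > 0$ chosen so that $\mu^2 \eta_0^{1-q} \leq \gamma/2$. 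Exploiting the $C^2$-regularity of $\partial U$ (inherited from $\partial \Omega$ for $\delta$ small), a direct computation gives $\eps^2 \Delta \psi_\eps = \mu^2 \psi_\eps + O(\eps)$ in a tubular neighborhood of $\partial U$; combined with $\psi_\eps \leq \eta_0$ this yields $\eps^2 \Delta \psi_\eps \leq \gamma \eta_0^{q-1} \psi_\eps \leq \gamma \psi_\eps^q \leq f(\psi_\eps)$ for $\eps$ sufficiently small, so $\psi_\eps$ is a supersolution. Since $\psi_\eps = \eta_0 \geq w_\eps$ on $\partial U$ by Step 1 and $f$ is monotone on the relevant range, the comparison principle gives $w_\eps \leq \psi_\eps$ in $U$. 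For any $x \in \Omega \setminus \Omega_{2\delta}$ we have $\mathrm{dist}(x,\partial U) \geq \delta/2$, and so $b - u_\eps(x) \leq \eta_0\, e^{-\mu\delta/(2\eps)}$, which is the claim after setting $C \coloneqq \eta_0$ and replacing $\mu$ by $\mu/2$.

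The chief technical obstacle, compared with the quadratic case in \cite{fonseca2025secondordergammalimitcahnhilliardfunctional1}, is that the supersolution inequality $\mu^2 \psi \leq \gamma \psi^q$ \emph{degenerates} as $\psi \to 0$ in the subquadratic regime, in sharp contrast to the quadratic case where the analogous inequality $\mu^2 \psi \leq \gamma \psi$ holds globally. This is precisely the reason we must cap the barrier by a fixed $\eta_0 > 0$ and tie the choice of $\mu$ to $\eta_0$ through $\mu^2 \eta_0^{1-q} \leq \gamma$. A minor secondary issue is that $\mathrm{dist}(\cdot,\partial U)$ is only $C^2$ in a tubular neighborhood of $\partial U$ of fixed positive width, but since the barrier decays on the scale $\eps \ll \delta$, the comparison is effective well within this tubular neighborhood and may be extended trivially beyond, exactly as in the quadratic case.
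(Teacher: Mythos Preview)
Your proposal is essentially correct and follows the two-stage strategy of \cite[Thm.~4.9]{fonseca2025secondordergammalimitcahnhilliardfunctional1}: uniform closeness via Caffarelli--C\'ordoba, then a barrier comparison for the Euler--Lagrange equation. However, in the second stage you work harder than necessary, and the paper's route is simpler. The paper observes that since $q\in(0,1)$ and $0\leq w_\eps\leq b-a$, one has $w_\eps^q\geq(b-a)^{q-1}w_\eps$, so the subquadratic differential inequality $\eps^2\Delta w_\eps\geq C w_\eps^q$ immediately upgrades to the \emph{linear} inequality $\eps^2\Delta w_\eps\geq C' w_\eps$ with $C'=C(b-a)^{q-1}$; after this one line the barrier argument is literally identical to the quadratic case, with no need to couple $\mu$ to $\eta_0$. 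Your approach of retaining the nonlinearity $f(s)\geq\gamma s^q$ and adapting the barrier also works, but two points deserve correction. First, the curvature term in $\eps^2\Delta\psi_\eps$ is $-\mu\eps\,\psi_\eps\,\Delta d$, i.e.\ $O(\eps\psi_\eps)$, not $O(\eps)$; taken literally, an additive $O(\eps)$ could not be absorbed into $\gamma\psi_\eps^q$ once $\psi_\eps$ is exponentially small, so this should be written multiplicatively. Second, your final paragraph has the degeneration backwards: the supersolution condition $\mu^2\psi\leq\gamma\psi^q$ is equivalent to $\psi^{1-q}\leq\gamma/\mu^2$, which becomes \emph{easier} as $\psi\to 0$; the cap $\psi\leq\eta_0$ is needed to control \emph{large} values of $\psi$, not small ones. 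This does not affect the validity of your argument, but it obscures the simple observation the paper exploits.
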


\begin{proof}
The proof is as in \cite[Thm.~4.9]{fonseca2025secondordergammalimitcahnhilliardfunctional1}. The only change is that from the Euler--Lagrange equation, we deduce that for a minimizer $ u_\eps \in \argmin \mathcal{F}_\eps^{(0)} $, we have for $ u_\eps $ close to $b$ that
\begin{equation*}
    \eps^2 \Delta ( b - u_\eps ) (x)
    =
    -W' (u_\eps ( x ) )
    \geq
    C ( b - u_\eps ( x ) )^q
    \geq
    C ( b - u_\eps ( x ) ).
\end{equation*}
The last inequality follows since $ q \in (0,1)$ and $ b-u_\eps \leq b-a $ by \Cref{prop:minimizer_properties}. Now we have arrived at the same inequality  which is used in the reference material, and the rest of the proof is identical.
\end{proof}
We finish by showing the limit inferior inequality for the second order functional by applying slicing, using the lower bounds of the one-dimensional functional to pull the limit into the integral and then using the one-dimensional limit inferior inequality.

\begin{theorem}
\label{thm:liminf_second_order}
Assume that $u_0 \coloneqq b$ is the unique minimizer of the functional $\mathcal{F}^{(1)}$. Under the assumptions from \Cref{sct:assumptions}, we have
\begin{equation*}
    \liminf_{\eps\to 0} \mathcal{F}^{(2)}_{\eps}(u_{\eps})\geq \int_{\partial \Omega}-\kappa(y)
    \int_0^{\infty}\dw ( z_{g( y ) } (s), b ) \dd{s} \dd{\mathcal{H}^{n-1}(y)},
\end{equation*}
where $u_{\eps}\in \argmin \mathcal{F}^{(0)}_{\eps}$ for every $\eps>0$.
\end{theorem}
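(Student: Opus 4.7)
The plan is to reduce the multidimensional problem to the one-dimensional weighted case via slicing along inner normals, to apply the quantitative $\liminf$-inequality from \Cref{p:liminf 1d} slice-by-slice, and then to use Fatou's lemma together with the quantitative lower bound (\ref{eq:liminf_1d_quantified}) to exchange the limit inferior with the surface integral over $\partial \Omega$.

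First I would fix $\delta > 0$ small enough so that the diffeomorphism $\Phi$ from \Cref{p:The Phi} is defined on $\partial \Omega \times (0, \delta)$ and so that \Cref{p:main estimate liminf} applies, and then set $\omega(y,t) \coloneqq \det J_\Phi(y,t)$, which satisfies $\omega(y,0) = 1$ and $\partial_t \omega(y,0) = -\kappa(y)$. For every $y \in \partial \Omega$, define the normal trace $v_{\eps,y}(t) \coloneqq u_\eps(\Phi(y,t))$, which satisfies $v_{\eps,y}(0) = g_\eps(y)$ and, by \Cref{p:main estimate liminf}, $b - v_{\eps,y}(\delta) \leq C e^{-\mu \delta/\eps}$ uniformly in $y$. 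Since $\abs{v_{\eps,y}'(t)} \leq \abs{\nabla u_\eps(\Phi(y,t))}$, discarding the non-negative contribution from $\Omega \setminus \Omega_\delta$ and the tangential component of $\nabla u_\eps$, the area formula and Fubini yield
\begin{equation*}
\mathcal{F}^{(2)}_\eps(u_\eps) \geq \int_{\partial \Omega} G_\eps^{(2,y)}(v_{\eps,y}) \dd{\hm^{n-1}(y)},
\end{equation*}
where $G_\eps^{(2,y)}$ is the one-dimensional functional from \Cref{sct:1_dim} with weight $\omega(y,\cdot)$, boundary data $\alpha_\eps = g_\eps(y)$, $\beta_\eps = v_{\eps,y}(\delta)$ and limit $\alpha = g(y)$. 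Since $v_{\eps,y}$ is an admissible competitor for $G_\eps^{(2,y)}$, we have $G_\eps^{(2,y)}(v_{\eps,y}) \geq \min G_\eps^{(2,y)} = G_\eps^{(2,y)}(\tilde{v}_{\eps,y})$ for a 1D minimizer $\tilde v_{\eps,y}$.

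To interchange limit inferior and surface integral I would apply the quantitative estimate (\ref{eq:liminf_1d_quantified}). Since $\|\omega(y,\cdot)\|_{C^1([0,\delta])}$ is bounded uniformly in $y$ (by smoothness of $\Phi$ and compactness of $\partial \Omega$) and the exponential decay rate in \Cref{p:main estimate liminf} is uniform in $y$, the constant $M'$ can be chosen independent of $y$; the hypothesis (\ref{eq:bdry_assumptions_1d}) follows from (\ref{eq:assumption_bdry_data_distinction}) together with $\partial_t \omega(y,0) = -\kappa(y)$. Let $\lambda \coloneqq \liminf_{\eps \to 0} \mathcal{F}^{(2)}_\eps(u_\eps)$, take a subsequence $(\eps_k)$ realizing this limit inferior, and, invoking (\ref{eq:geps_to_g}), extract a further subsequence (not relabelled) along which $\dw(g_{\eps_k}(y), g(y))/\eps_k \to 0$ for $\hm^{n-1}$-a.e. $y$. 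By (\ref{eq:liminf_1d_quantified}) the integrand
\begin{equation*}
G_{\eps_k}^{(2,y)}(\tilde v_{\eps_k,y}) + M' \left( 1 + \frac{\dw(g_{\eps_k}(y),g(y)) + \dw(v_{\eps_k,y}(\delta),b)}{\eps_k} \right)
\end{equation*}
is non-negative, so Fatou's lemma, combined with the fact that (\ref{eq:geps_to_g}) and the uniform exponential decay control the $\lp^1$-norm of the correction, and with \Cref{p:liminf 1d} applied pointwise, yields
\begin{equation*}
\lambda \geq \int_{\partial \Omega} \omega'(y,0) \int_0^\infty 2 \sqrt{W(z_{g(y)}(s))}\, z_{g(y)}'(s)\, s \dd{s} \dd{\hm^{n-1}(y)}.
\end{equation*}
Integration by parts in the inner integral (using $z_{g(y)}(s) = b$ for $s \geq T^{(g(y))}$ so that boundary terms vanish) converts this into the claimed expression involving $\dw(z_{g(y)}(s),b)$ and $-\kappa(y)$.

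The main obstacle is the interchange of the pointwise $\liminf$ with the surface integral: assumption (\ref{eq:geps_to_g}) only gives $\lp^1$-convergence of $\dw(g_\eps,g)/\eps$, not pointwise convergence, so the quantitative estimate (\ref{eq:liminf_1d_quantified}) is essential to produce a non-negative integrand to which Fatou's lemma applies; the subsequence argument then provides the pointwise convergence required for \Cref{p:liminf 1d}. A secondary delicate point is tracking the $y$-uniformity of the various constants and exponential rates (in \Cref{p:main estimate liminf} and in the results of \Cref{sct:1_dim}), which hinges on the smoothness of $\omega$, compactness of $\partial \Omega$, and the $y$-uniform assumption (\ref{eq:assumption_bdry_data_distinction}).
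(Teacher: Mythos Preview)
Your proposal is correct and follows essentially the same approach as the paper's proof: slice via $\Phi$, drop the tangential gradient and the bulk contribution on $\Omega\setminus\Omega_\delta$, replace the slice by the one-dimensional minimizer, and use the quantitative lower bound (\ref{eq:liminf_1d_quantified}) to justify Fatou's lemma. Your extraction of a subsequence along which $\dw(g_{\eps_k}(y),g(y))/\eps_k\to 0$ for $\hm^{n-1}$-a.e.\ $y$ is in fact a slight clarification over the paper, which applies \Cref{p:liminf 1d} pointwise without making this subsequence step explicit; your version cleanly ensures both the pointwise hypothesis of \Cref{p:liminf 1d} and the a.e.\ convergence of the correction term needed for the generalized Fatou argument.
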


\begin{proof}
Fix $\delta > 0 $ as in Proposition \ref{p:The Phi}. By Proposition \ref{p:main estimate liminf} we find $C,\mu>0$, independent of $\eps>0$, such that 
 \begin{equation}\label{e:main estimate eq}
    b-u_{\eps}(x)\leq Ce^{-\mu\delta/\eps} \quad \textup{for every $x\in \Omega \setminus \Omega_{\delta}$},
\end{equation}
for every $\eps>0$ sufficiently small. Write
\begin{align}\label{e:first eq last teo}
    \mathcal{F}^{(2)}_{\eps}(u_{\eps})\geq \int_{\Omega_{\delta}}\left(\frac{W(u_{\eps})}{\eps^2}+|\nabla u_{\eps}|^2\right) \dd{x} -\frac{1}{\eps}\int_{\partial \Omega}\dw(g,b)\dd{ \mathcal{H}^{n-1}}
    .
\end{align}
By the Area Formula, Fubini and \Cref{p:The Phi} we get 
\begin{align*}
 & \int_{\Omega_{\delta}}\left(\frac{W(u_{\eps})}{\eps^2}+|\nabla u_{\eps}|^2\right) \dd{x} \\
  ={}&
  \int_{\partial \Omega}\int_0^{\delta}\left(\frac{W(u_{\eps}(\Phi(y,t)))}{\eps^2}+|\nabla u_{\eps}(\Phi(y,t))|^2)\right)\omega(y,t) \dd{t} \dd{\hm^{n-1}(y)},
\end{align*}
where as before $\omega(y,t)\coloneqq \det J_{\Phi}(y,t)$. 
Let $\tilde u_{\eps} \coloneqq u_{\eps}\circ \Phi$. Since $u_{\eps}\in C^1(\Omega)$ and $\Phi(y,t)=y+t\nu(y)$, we have that 
\begin{equation*}
    \frac{\partial \tilde u_{\eps}}{\partial t}(y,t)=\frac{\partial u_{\eps}}{\partial \nu(y)}(y+t\nu(y))
\end{equation*}
and therefore 
\begin{equation*}
    |\nabla u_{\eps}(\Phi(y,t))|^2\geq  \left|\frac{\partial \tilde u_{\eps}}{\partial t}(y,t)\right|^2
\end{equation*}
for every $y\in \partial \Omega$ and every $t\in (0,\delta)$. In particular we obtain that 
\begin{equation}\label{e:second eq last teo}
    \mathcal{F}_\eps^{(2)} (u_\eps)
    \geq 
    \int_{\partial \Omega}
    \left(
        \int_0^{\delta}
        \left(
            \frac{W(\tilde u_{\eps}(y,t))}{\eps^2}+\left|\frac{\partial \tilde u_{\eps}}{\partial t}(y,t)\right|^2
        \right)\omega(y,t) 
        \dd{t}
        - \frac{1}{\eps}\dw(g(y),b)
    \right) 
    \dd{ \mathcal{H}^{n-1}(y) }
\end{equation}
and thanks to inequality \eqref{e:main estimate eq}, for every $y\in \partial \Omega$ we have that 
\begin{equation}\label{e:convergence to b}
    b-Ce^{-\mu\delta/\eps}\leq \tilde u_{\eps}(y,\delta)\leq b.
\end{equation}
Let $v^{y}_{\eps}\in \h^1((0,\delta))$ be a minimizer of the weighted 1-dimensional functional 
\begin{equation*}
    v\mapsto \int_0^{\delta}\left(\frac{W(v(t))}{\eps^2}+|v'(t)|^2\right)\omega(y,t)\dd{t},
\end{equation*}
defined for all $v\in \h^1((0,\delta))$ such that $v(0)=g_{\eps}(y)$ and $v(\delta)=\tilde u(y,\delta) \in [b-C e^{-\mu \delta / \eps }, b ]$. From \eqref{e:convergence to b} and \Cref{p:liminf 1d}, which can be applied since we assume (\ref{eq:assumption_bdry_data_distinction}), we thus deduce that for $ \hm^{n-1}$-almost every $ y \in \partial \Omega $, we have
\begin{align}\notag
& 
\liminf_{\eps\to 0} 
\int_0^{\delta}
\left(
    \frac{W(\tilde u_{\eps}(y,t))}{\eps^2}+\left|\frac{\partial \tilde u_{\eps}}{\partial t}(y,t)\right|^2
\right)\omega(y,t) 
\dd{t}
- 
\frac{1}{\eps}\textup{d}_W(g(y),b) 
\\ 
\notag
\geq{}& 
\liminf_{\eps\to 0} 
    \int_0^{\delta}\left(\frac{W(v^{y}_{\eps}(t))}{\eps^2}+|(v^{y}_{\eps})'(t)|^2\right)\omega(y,t)\dd{t}- \frac{1}{\eps}\dw(g(y),b)
\\ 
\label{e:third eq}
\geq{} & 
2 \frac{\partial \omega}{\partial t}(y,0) \int_0^{\infty}\sqrt{W(z_{g(y)}(s))}z_{g(y)}'(s) s \dd{s}
=
-\kappa(y)\int_0^{\infty}\dw ( z_{g ( y ) }(s), b)\dd{s}, 
\end{align}
where in the last equality we used equation \eqref{e:mean curvature}. To conclude the proof, we have to justify pulling the limit inferior into the integral. By \Cref{p:liminf 1d} we have that
\begin{align}
\notag
    & \int_0^{\delta}
    \left(
        \frac{1}{\eps^2}W(v^{y}_{\eps}(t))+|(v^{y}_{\eps})'(t)|^2
    \right)\omega(y,t)\dd{t}
    - \frac{1}{\eps}\dw(g(y),b)
    \\
    \label{eq:majorant}
    \geq{} &
    -M' \left( 
    1 + \frac{ \dw ( g_\eps ( y ) , g ( y ) ) + \dw ( b - C e^{-\delta \mu / \eps } , b ) }{ \eps }
    \right),
\end{align}
where $ M' $ possibly depends on $ W $, $ \Omega$, $ \alpha_-$, $ \kappa_0 $ and the speed of convergence of $ C e^{ \delta \mu / \eps }$. These quantities are all independent of $ y \in \partial \Omega $. Furthermore by assumption (\ref{eq:geps_to_g}), the function in (\ref{eq:majorant}) is integrable on $ \partial \Omega $ with respect to $ \hm^{n-1}$. Thus we can apply Fatou's Lemma to argue with \eqref{e:second eq last teo} and \eqref{e:third eq} that
\begin{equation*}
    \liminf_{\eps \to 0 }
    \mathcal{F}_\eps^{(2)} ( u_\eps )
    \geq
    \int_{\partial \Omega }
    - \kappa ( y ) 
    \int_0^\infty \dw ( z_{g(y)}(s), b ) \dd{ s },
\end{equation*}
which is what we wanted to show.
\end{proof}

\begin{remark}
    We lastly want to discuss the different scaling regimes and argue that the order $ \eps $ is the first non-trivial scaling for the second-order functional. Namely let $ \delta_\eps \to 0 $ be any other scale such that $ \eps / \delta_\eps \to 0 $. It follows then from our arguments that as long as 
    \begin{equation}
    \label{eq:scalign_assumption}
    \lim_{\eps \to 0}
        \int_{\partial \Omega }
        \frac{\dw ( g_\eps , g ) }{\delta_\eps }
        \dd{ \hm^{n-1} }
        =
        0,
    \end{equation}
    it holds that the $ \Gamma $-limit of $ (\mathcal{F}_\eps^{(1)}-\min \mathcal{F}^{(1)})/\delta_\eps $ is given by zero. When we drop assumption (\ref{eq:scalign_assumption}) however, we could arrive at a different $ \Gamma $-limit.
\end{remark}

\section{Appendix}
\label{sct:appendix}

\begin{proof}[Proof of \Cref{prop:zero_order_gamma_limit}]
    The $ \liminf $-inequality follows immediately from Fatou's Lemma by passing to a suitable subsequence which realizes the limit inferior of the energies and converges pointwise almost everywhere.

    For the $\limsup $-inequality, take any $ u \in \lp^1 ( \Omega ) $. We note that as an assumption on the boundary data we will only need that $ \partial \Omega $ is Lipschitz and that $ g_\eps \in \h^{1/2} ( \partial \Omega ) $ with 
    \begin{equation}
    \label{eq:zero_order_ass}
        \eps^2 [ g_\eps ]_{\mathrm{H}^{1/2} ( \partial \Omega ) }
        \coloneqq
        \eps^2 \int_{ \partial \Omega } \int_{ \partial \Omega }
        \frac{ \abs{ g_\eps ( x ) - g_\eps ( y )}^2 }{\abs{x-y}^n}
        \dd{ \hm^{n-1} ( x ) } 
        \dd{ \hm^{n-1 } ( y ) }
        \to 
        0,
    \end{equation}
    which is a strictly milder assumption than (\ref{eq:ass_tang_deriv_squared}) because $ [ \cdot ]_{ \mathrm{H}^{1/2} ( \partial \Omega ) } \lesssim \norm{ \nabla \cdot }_{ \mathrm{L}^2 ( \partial \Omega ) } $. Since $ g_\eps \in \mathrm{H}^{1/2}( \partial \Omega ) $, there exist $ v_\eps \in \mathrm{H}^1 ( \R^n )$ such that $ \tr ( v_\eps ) = g_\eps $ on $ \partial \Omega $, $v_\eps ( x ) \in [a,b] $ for almost every $ x \in \R^n $ and $ \norm{ v_\eps }_{ \mathrm{H}^1 ( \R^n ) } \lesssim \norm{ g_\eps }_{ \mathrm{H}^{1/2} ( \partial \Omega ) } $.
    Moreover by a standard approximation result and passing to an appropriate subsequence, we find $ w_\eps \in \mathrm{H}^1 ( \Omega ) $ such that $ w_\eps \to u $ pointwise almost everywhere and in $ \mathrm{L}^1 ( \Omega ) $ and with the property that
    \begin{equation*}
        \eps^2 \int_\Omega \abs{ \nabla w_\eps }^2 \dd{ x }
        \to 
        0.
    \end{equation*} 
    Let $ \eta_\eps $ be a sequence of cutoff-functions such that $ \abs{ \nabla \eta_\eps } \lesssim 1/\eps $, $ \mathrm{supp} ( \eta_\eps ) \subset\joinrel\subset \Omega $ and $ \eta_\eps \to 1 $. Finally define $ u_\eps \coloneqq (1-\eta_\eps ) v_\eps + \eta_\eps w_\eps  \in \mathrm{H}^1 ( \Omega ) $. Since $ \eta_\eps \to 1$, it follows that $ u_\eps \to u $ in $ \mathrm{L}^1 ( \Omega ) $ and pointwise almost everywhere. Furthermore due to $ \abs{ v_\eps }, \abs{ w_\eps } \leq \max\{ a, b \}$, we have
    \begin{align*}
        \eps^2 \int \abs{ \nabla u_\eps } \dd{ x }
        \less
        \eps^2 \int \abs{ \nabla v_\eps }^2 \dd{ x }
        +
        \eps^2 \int \abs{ \nabla w_\eps }^2 \dd{ x }
        +
        C \eps
        \to 0.
    \end{align*}
    Thus by the dominated convergence theorem, we conclude that
    \begin{equation*}
        \limsup_{ \eps \to 0 }
        \mathcal{F}_\eps^{ ( 0 ) } (u_\eps )
        =
        \mathcal{F}^{(0)} (u),
    \end{equation*}
    which finishes the proof.
    \end{proof}

\begin{lemma}
\label{lem:approx_near_boundary}
    Let $ \Omega $ be a bounded and open set with Lipschitz boundary. Then for every $ u \in \bv ( \Omega ; \{a,b \} ) $, we find a sequence $ (u_n)_{n \in \N } \subseteq \bv ( \Omega ; \{a,b\} ) $ such that $ u_n \to u $ in the strict convergence on $ \bv ( \Omega ) $ and such that for all $ n \in \N $ exists some $ \delta_n \gtr 0 $ sufficiently small such that $ u_n ( \Phi (t, y ) ) = \tr u_n ( y )  $ for $ \hm^{ N - 1 }$-almost every $ y \in \partial \Omega $ and every $ 0 \less t \less \delta_n $.
\end{lemma}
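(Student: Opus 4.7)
The plan is to reduce the problem to approximating sets of finite perimeter. Since $u \in \bv(\Omega; \{a, b\})$ is determined by the set $E \coloneqq \{u = b\}$, it is enough to find sets $E_n \subseteq \Omega$ of finite perimeter with $\chi_{E_n} \to \chi_E$ strictly in $\bv(\Omega)$ and such that $E_n \cap \Omega_{\delta_n}$ consists of full normal fibers $\Phi(\{y\} \times (0, \delta_n))$ for some $\delta_n > 0$; setting $u_n \coloneqq a + (b-a) \chi_{E_n}$ then yields the claim. I would construct $E_n$ in two stages: first smooth $E$, then perform a cut-and-paste in a thin collar so the resulting set is collar-cylindrical.

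For the first stage, extend $\chi_E$ to a $\bv(\R^n)$ function (possible because $\partial \Omega$ is Lipschitz), mollify, and select a regular superlevel $\{\rho_{1/k} * \tilde \chi_E > s_k\}$ via Sard's theorem to obtain smooth open sets $E^{(k)} \subseteq \R^n$ whose boundaries are transverse to $\partial \Omega$ and which satisfy $\chi_{E^{(k)}} \to \chi_E$ strictly in $\bv(\Omega)$. For the second stage, for each fixed $k$ and each $\delta > 0$ smaller than both the reach of $\Phi$ and the transversality scale of $E^{(k)}$, I define
\begin{equation*}
    F_\delta^{(k)} \coloneqq \bigl\{ y \in \partial \Omega : \Phi(y, \delta) \in E^{(k)} \bigr\}
    \quad \text{and} \quad
    \tilde E_\delta^{(k)} \coloneqq \Phi\bigl(F_\delta^{(k)} \times (0, \delta)\bigr) \cup \bigl(E^{(k)} \setminus \Omega_\delta\bigr).
\end{equation*}
By construction, $\tilde E_\delta^{(k)}$ is a union of full normal fibers inside $\Omega_\delta$, and the interface $\{\mathrm{dist}(\cdot, \partial \Omega) = \delta\}$ matches perfectly, since $\Phi(F_\delta^{(k)} \times \{\delta\}) = E^{(k)} \cap \{\mathrm{dist} = \delta\}$; hence the gluing introduces no artificial jump.

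The third step is to prove $\chi_{\tilde E_\delta^{(k)}} \to \chi_{E^{(k)}}$ strictly in $\bv(\Omega)$ as $\delta \to 0$. The $L^1$ convergence is immediate since the symmetric difference lies in $\Omega_\delta$ and $|\Omega_\delta| \to 0$. For the perimeter, the matching interface gives
\begin{equation*}
    P\bigl(\tilde E_\delta^{(k)}, \Omega\bigr)
    = \hm^{n-1}\bigl(\Phi\bigl(\partial^\star F_\delta^{(k)} \times (0, \delta)\bigr)\bigr)
    + P\bigl(E^{(k)}, \Omega \setminus \Omega_\delta\bigr).
\end{equation*}
By the implicit function theorem, $F_\delta^{(k)}$ deforms smoothly to $E^{(k)} \cap \partial \Omega$, so $\hm^{n-2}(\partial F_\delta^{(k)})$ stays uniformly bounded, and the first term is $O(\delta) \to 0$; the second increases to $P(E^{(k)}, \Omega)$ by monotone convergence, using that $\partial E^{(k)} \cap \partial \Omega$ is at most an $(n-2)$-submanifold by transversality and therefore $\hm^{n-1}$-null. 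A standard diagonal extraction $\delta_k \downarrow 0$ then produces $u_n$ as required.

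The main obstacle is not the geometric cylinderization but the initial smooth approximation: one must guarantee that the perimeter converges inside $\Omega$ and that the approximating boundaries hit $\partial \Omega$ transversally. Transversality is settled by Sard, while the absence of perimeter escape onto $\partial \Omega$ follows from the BV extension theorem for Lipschitz domains combined with the strict convergence of mollifications of the extension. Once these classical ingredients are in place, the collar construction is a clean geometric operation that preserves strict convergence.
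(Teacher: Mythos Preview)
Your proposal is essentially correct, but it takes a considerably longer route than the paper. The paper bypasses the first stage entirely: there is no need to first approximate $E=\{u=b\}$ by smooth sets. Instead, the paper works directly with $u$ and uses the slicing theory of $\bv$-functions (cf.\ \cite[Sct.~3.11]{ambrosio_fusco_pallara_00_functions_of_bv}) to select levels $t_n\downarrow 0$ along which the tangential variation $|\diff_\tau(u\circ\Phi)(\cdot,t_n)|(\partial\Omega)$ stays uniformly bounded. One then sets $u_n\coloneqq u$ on $\Omega\setminus\Omega_{t_n}$ and extends the inner trace of $u$ on $\{\mathrm{dist}=t_n\}$ constantly along normal fibers into $\Omega_{t_n}$. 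The traces match on the interface, so
\[
|\diff u_n|(\Omega)\le |\diff u|(\Omega\setminus\Omega_{t_n})+C\,t_n\,|\diff_\tau(u\circ\Phi)(\cdot,t_n)|(\partial\Omega)\to |\diff u|(\Omega),
\]
and strict convergence follows from lower semicontinuity.

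The difference is that the paper replaces your transversality machinery (mollification, Sard, implicit function theorem, uniform $\hm^{n-2}$-bounds on $\partial F_\delta^{(k)}$) by the single observation that the tangential variation of a $\bv$-function is finite on almost every parallel hypersurface. This is exactly the $\bv$-analogue of the statement ``$\hm^{n-2}(\partial F_\delta^{(k)})$ is bounded'' that you obtain only after smoothing, and it holds for the original $u$ without any regularization. Your detour through smooth sets buys nothing here and introduces extra work (ensuring strict convergence in $\Omega$ rather than $\R^n$, arranging transversality to $\partial\Omega$ and to all nearby parallel surfaces, the diagonal argument). Both proofs are valid, but the paper's is a one-step construction instead of two, and avoids any regularity discussion of $\partial E^{(k)}$.
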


\begin{proof}
    By the slicing of $ \bv $-functions (see end of Section 3.11 in \cite{ambrosio_fusco_pallara_00_functions_of_bv}) and recalling \Cref{p:The Phi}, we find a sequence $ t_n \downarrow 0 $ such that 
    \begin{equation}
    \label{eq:slice_bound}
    \sup_{n \in \N } \abs{ \diff_\tau (u \circ \Phi ) ( \cdot, t_n ) } ( \partial \Omega ) \less \infty,
    \end{equation} 
    where $ \diff_\tau $ denotes the tangential derivative of $ u \circ \Phi $ along $ \partial \Omega_t$. We then define
    \begin{equation*}
        u_n ( x ) 
        \coloneqq
        \begin{cases}
            u( x ), &\text{if } x \in \Omega \setminus \Omega_{t_n},
            \\
            \tr u ( \Phi (y, t_n ) ) , &\text{if } x = \Phi (y, t ) \text{ for some } t \in (0,t_n ), y \in \partial \Omega.
        \end{cases}
    \end{equation*}
    Here the trace is taken from inside the set $ \Omega \setminus \Omega_{t_n}$. 
    We immediately get by the dominated convergence theorem that $ u_n \to u $ in $ \lp^1 ( \Omega ) $. We note that since the traces of $ u_n $ agree on both sides of $ \partial \Omega_{t_n}$, we have that
    \begin{equation*}
        \abs{ \diff u_n } ( \Omega )
        \leq
        \abs{ \diff u } ( \Omega \setminus \Omega_{ t_n } ) + C t_n \abs{ \diff_\tau (u \circ \Phi ) ( \cdot, t_n ) } ( \partial \Omega ),
    \end{equation*}
    which converges to $ \abs{ \diff u } ( \Omega ) $ by (\ref{eq:slice_bound}) and since $ t_n \to 0 $ as $ n \to \infty $. This finishes the proof.
\end{proof}

\begin{proof}[Proof of \Cref{p:first order gamma limit multidim}]
   First we show the $\liminf$-inequality. Let $ u \in \lp^1 ( \Omega ) $ and $ ( u_\eps )_\eps \subseteq \w^{1,2} ( \Omega ) $  such that $ u_\eps \to u $ in $ \mathrm{L}^1 ( \Omega ) $. By passing to a suitable non-relabelled subsequence, we may assume without loss of generality that $ \tr u_\eps = g_\eps $ on $  \partial \Omega $ and $ u_\eps \to u $ pointwise almost everywhere. Let $ \delta > 0 $ be as in \Cref{p:The Phi} and $ 0< \delta' < \delta $. Then
   \begin{align*}
       &\liminf_{\eps \to 0 }
       \int_\Omega 
       \frac{1}{\eps } W ( u_\eps ) + \eps \abs{\nabla u_\eps }^2 
       \dd{ x }
       \\
       \geq{}&
       \liminf_{\eps \to 0 }
       \int_{\Omega_\delta' }
       \frac{1}{\eps } W ( u_\eps ) + \eps \abs{\nabla u_\eps }^2 
       \dd{ x }
       +
       \liminf_{ \eps \to 0 }
       \int_{ \Omega \setminus \overline{\Omega_\delta' } }
       \frac{1}{\eps } W ( u_\eps ) + \eps \abs{\nabla u_\eps }^2 
       \dd{ x }.
   \end{align*}
   By the classical $ \Gamma $-convergence result \cite{ModicaMortola77}, the second summand is estimated from below by
   \begin{equation*}
       C_W \mathrm{Per} ( \{ u = a \} ; \Omega \setminus \overline{\Omega_\delta' } )
       \to 
       C_W \mathrm{Per} ( \{ u = a \} ; \Omega ) 
       \quad
       \text{as }
       \delta' \to 0.
   \end{equation*}
   For the first summand, we apply Fatou's Lemma and \Cref{p:The Phi} to obtain
   \begin{align*}
    \liminf_{\eps \to 0 }
       \int_{\Omega_\delta' }
       \frac{1}{\eps } W ( u_\eps ) + \eps \abs{\nabla u_\eps }^2 
       \dd{ x }   
    & \geq
    \int_{ \partial \Omega }
    \liminf_{ \eps \to 0 }
    \int_0^\delta 
    \left( \frac{1}{\eps} W ( u_\eps \circ \Phi ) + \eps \abs{ \nabla u_\eps \circ \Phi }^2 \right)
    \omega ( t, y ) 
    \dd{ t }
    \dd{ \hm^{n-1} ( y ) }.
   \end{align*}
    For given $ y \in \partial \Omega $, we define $ v_\eps^{(y)} ( t ) \coloneqq u_\eps ( \Phi (y,t ) )$. First we note that by passing to a non-relabelled subsequence, we have that
    \begin{equation*}
        v_\eps^{(y)}
        \to 
        u \circ \Phi ( y, \cdot ) 
        \text{ in } \mathrm{L}^1 ( (0,\delta ) )
        \text{ for } \hm^{n-1}\text{-almost every }y \in \partial \Omega.
    \end{equation*}
    By using a density argument and Fatou's Lemma, we see that in the sense of one-dimensional traces, we have $ v_\eps^{(y)}(0) = g_\eps ( y ) $ for $ \hm^{n-1}$-almost every $ y \in \partial \Omega $. 
    Moreover we know again by a density argument that for $ \lm^1 $-almost every $ 0< \delta' < \delta $, the interior trace of $ u $ respectively $ u_\eps $ taken with respect to $ \Omega_{\delta'} $ coincide with $ u $  respectively $ u_\eps $  $ \hm^{n-1}$-almost everywhere.
    By using a density argument again, we see that for $ \lm^1$-almost every  $ 0 < \delta' < \delta $, we have $ \tr v_\eps^{(y)} ( \delta' ) = u_\eps ( \Phi ( y, \delta' ) ) $  $ \hm^{n-1}$-almost everywhere and 
    $ u_\eps ( \Phi ( y, \delta') ) \to u  (\Phi (y, \delta') ) $ in $ \mathrm{L}^1 ( \partial \Omega_{\delta'} \cap \Omega ) $.
    Furthermore we note that $ \abs{\partial_t v_\eps^{(y)}} \leq \abs{ \nabla u_\eps \circ \Phi (y, t ) } $, thus by applying the one-dimensional $ \liminf$-inequality \Cref{p:liminf 1d}, we obtain for $ \lm^1$-almost every $ 0 < \delta' < \delta $ that
    \begin{align*}
    &\int_{ \partial \Omega }
    \liminf_{ \eps \to 0 }
    \int_0^\delta 
    \left( \frac{1}{\eps} W ( u_\eps \circ \Phi ) + \eps \abs{ \nabla u_\eps \circ \Phi }^2 \right)
    \omega ( t, y ) 
    \dd{ t }
    \dd{ \hm^{n-1} ( y ) }
    \\
    \geq{}&
    \int_{ \partial \Omega }
    \liminf_{ \eps \to 0 }
    \int_0^\delta 
    \left( \frac{1}{\eps} W ( v_\eps^{(y)} ) + \eps \abs{  \partial_t v_\eps^{(y)} }^2 \right)
    \omega ( t, y ) 
    \dd{ t }
    \dd{ \hm^{n-1} ( y ) }
    \\
    \geq{} &
    \int_{ \partial \Omega }
    C_W \mathrm{Per} ( \{ u (\Phi (y, \cdot ))= a \} , \Omega_{ \delta' }  )
    \\
    & \quad \;\;\,+
    \dw ( g ( y ) , \tr_{\partial \Omega } u ( y ) )
    +
    \dw ( u (\Phi ( y, \delta' ) ) , \tr_{\partial \Omega_{\delta'} \cap \Omega } u ( \Phi ( y, \delta' ) ) )
    \dd{ \hm^{n-1} ( y ) }.
    \end{align*}
    Here the last summand vanishes by our choice of $ \delta' $ above. Moreover, as $ \delta' $ tends to zero, the set $ \Omega_{ \delta' }$ vanishes, thus the first summand tends to zero. By combining this estimate with the above arguments, we obtain
    \begin{equation*}
        \liminf_{\eps \to 0 }
        \int_\Omega \frac{1}{\eps} W ( u_\eps ) + \eps \abs{ \nabla u_\eps }^2
        \dd{ x }
        \geq
        C_W \mathrm{Per} ( \{ u = a \} ; \Omega )
        +
        \int_{ \partial \Omega }
        \dw ( g ( y ) , \tr u ( y ) )
        \dd{ \hm^{n-1} ( y ) },
    \end{equation*}
    which is what we wanted to show.
   
    We now prove the $ \limsup $-inequality. First we note that it suffices to consider $ u \in \bv ( \Omega ; \{ a, b \} ) $ since in the other case, we may take any $ \lp^1 $-approximation through $ \w^{1,2}$-functions. Furthermore, we note that $ \mathcal{F}^{(1)} $ is continuous with respect to the strict convergence on $ \bv ( \Omega ) $ because the trace operator is continuous onto $ \lp^1 ( \partial \Omega ) $ with respect to the strict convergence on $ \bv ( \Omega ) $ and $ \mathrm{d}_W ( x, y ) \lesssim \abs{ x - y } $ for $ x, y \in [a,b ]$. Thus it suffices by \Cref{lem:approx_near_boundary} for the $ \limsup $-inequality to additionally assume that there exists some $ \delta \gtr 0 $ such that 
    \begin{equation}
    \label{eq:u_constant_close_to_boundary}
        u( \Phi ( t, y ) ) = \tr u ( y ) 
        \quad 
        \text{for } \hm^{N-1}\text{-a.e. }y \in \partial \Omega \text{ and all } 0 \less t \less \delta.
    \end{equation}
    First we construct the recovery sequence in the interior of $ \Omega $.
    Recall the definition (\ref{e:Modica-Mortola}) and (\ref{e:Gammaconv modica mortola without constraint}) of the Cahn--Hilliard functional $ E_\eps^{(1)} $ and the perimeter functional $ E^{(1)}$, both without boundary data.
    By the original $ \Gamma $-convergence result \cite[Thm.~1]{ModicaMortola77} we find for a given $ u \in \bv ( \Omega ; \{a,b\} ) $ a sequence $ (u_\eps)_\eps \subseteq \h^1 ( \Omega ) $ such that $ u_\eps \to u $ in $ \lp^1 ( \Omega )$ and
    \begin{equation*}
        \limsup_{\eps \to 0 }
        E_\eps^{(1)} ( u_\eps ) 
        \leq 
        C_W \mathrm{Per} ( \{u=a\} ; \Omega ).
    \end{equation*} 
    By approximation and a diagonal sequence argument, we can assume without loss of generality that $ u_\eps \in C^\infty ( \overline{\Omega } )$.
    We now argue that we can modify $ u_\eps $ up to a small error such that the resulting sequence has the same property (\ref{eq:u_constant_close_to_boundary}) near the boundary as $ u $. For this we consider $ \Omega_\delta \setminus \Omega_{\delta/2}$. Using that
    \begin{align*}
        \int_{\delta/2}^\delta
        \int_{\partial \Omega }
        \abs{u_\eps ( \Phi ( y, t ) ) - u ( \Phi ( y, t ) ) } 
        \dd{\hm^{ N - 1 } ( y ) }
        \dd{ t }
        & \leq
        C
        \int_{ \Omega }
        \abs{ u_\eps ( x ) - u( x ) }
        \dd{ x }
        \to 0 
        \shortintertext{and}
        \int_{\delta/2}^{\delta }
        \int_{ \partial \Omega }
        \frac{1}{\eps } W ( u_\eps ( \Phi ( y, t ) ) ) + \eps \abs{ \nabla u_\eps ( \Phi ( y, t ) ) }^2
        \dd{ \hm^{N-1} ( y ) }
        \dd{ t }
        & \leq 
        C E_\eps^{(1) } ( u_\eps )
        \leq 
        C,
    \end{align*}
    we find by an overlapping argument that there must exist a sequence $ t_\eps \in [ \delta/2, \delta ] $ such that
    \begin{align}
        \int_{\partial \Omega }
        \abs{u_\eps ( \Phi ( y, t_\eps ) ) - u ( \Phi ( y, t_\eps ) ) } 
        \dd{\hm^{ N - 1 } ( y ) }
        & 
        \to 0 
        \shortintertext{and}
        \label{eq:bound_on_slice_energy}
        \int_{ \partial \Omega }
        \frac{1}{\eps } W ( u_\eps ( \Phi ( y, t_\eps ) ) ) + \eps \abs{ \nabla u_\eps ( \Phi ( y, t_\eps ) ) }^2
        \dd{ \hm^{N-1} ( y ) }
        & \leq
        C_1.
    \end{align}
    We then define the function
    \begin{equation*}
        \tilde{u}_\eps ( x ) 
        \coloneqq
        \begin{cases}
            u_\eps ( x ), 
            &\text{if }x \in \Omega \setminus \Omega_{t_\eps }
            \\
            u_\eps ( \Phi ( y, t_\eps ) ), 
            & \text{if } x \in \Omega_{t_\eps}, x = \Phi (y, t ) 
        \end{cases}
    \end{equation*}
    Because $ u $ satisfies equation (\ref{eq:u_constant_close_to_boundary}), we obtain that 
    \begin{equation}
    \label{eq:convergence_of_straigt_seq}
    \tilde{u}_\eps \to u \text{ in }  \lp^1 ( \Omega )
    \text{ and }
    u_\eps ( \Phi ( \cdot, t_\eps ) ) 
    \to 
    \tr u \text{ in } \lp^1 ( \partial \Omega ).
    \end{equation}
     Moreover, we note that if $y \in \partial \Omega, t \in (0, t_\eps ) $, we have
    \begin{align*}
        \partial_t ( \tilde{u}_\eps \circ \Phi ) ( y, t )
        &= 
        0
        \shortintertext{and}
        \partial_{y_i}
        ( \tilde{u}_\eps \circ \Phi ) (y, t )
        & =
        \nabla u_\eps ( \Phi ( y, t_\eps ) ) \partial_{y_i} \Phi ( y, t_\eps ).
    \end{align*}
    Thus we obtain that
    \begin{equation*}
        \abs{ \nabla \tilde{u}_\eps ( \Phi ( y, t ) ) }
        =
        \abs{ \nabla ( \tilde{u}_\eps \circ \Phi ) (y, t ) \nabla \Phi^{-1} ( \Phi ( y, t ) ) }
        \lesssim 
        \abs{ \nabla u_\eps ( \Phi ( y, t_\eps ) ) }.
    \end{equation*}
    From this inequality and inequality (\ref{eq:bound_on_slice_energy}), we deduce the energy bound
    \begin{align}
        \notag
        &\limsup_{\eps \to 0 }
        \mathcal{F}_\eps^{(1)} ( \tilde{u}_\eps )
        \\
        \notag
        \leq{}&
        \limsup_{ \eps \to 0 }
        E_\eps^{(1)} ( u_\eps )
        +
        \int_{ \partial \Omega }
        \int_0^{t_\eps }
        \left( 
        \frac{1}{\eps} W ( u_\eps ( \Phi ( y, t_\eps ) ) ) + C \eps \abs{ \nabla u_\eps ( \Phi ( y, t_\eps ) ) }^2
        \right)
        \omega( y, t )
        \dd{ t }
        \dd{ \hm^{N- 1 } ( y ) }
        \\
        \label{eq:energy_of_straightened_sequence}
         \leq{}&
        \limsup_{ \eps \to 0}
        E_\eps^{(1)} ( u_\eps  )
        +
        C_1 \delta.
    \end{align}
    For ease of notation, we now replace $ u_\eps $ by $ \tilde{u}_\eps $.
    Recall the definition of $ \Psi_\alpha $ in (\ref{eq:def_Psialpha}). For $ y \in \partial \Omega $ define $ T_\eps ( y ) \coloneqq \eps \mathrm{sgn} ( \tr u_\eps ( y) - g_\eps ( y ) ) \Psi_{g_\eps ( y ) } ( \tr u_\eps ( y ) ) $ and let $ v_\eps ( y, \cdot ) \colon [ 0, T_\eps ( y ) ] \to [g_\eps ( y ) , \tr u_\eps ( y ) ] $ be the inverse of $ \eps \mathrm{sgn} ( \tr u_\eps ( y ) - g_\eps ( y ) ) \Psi_{g_\eps ( y ) }$. 
    For $ t \gtr T_\eps ( y ) $, we define $ v_\eps ( y , t ) \coloneqq u_\eps ( \Phi ( y, t ) ) $ so that $ v_\eps \colon \partial \Omega \times (0, \delta ) \to \R$. 
   Note that because $ u_\eps $ is constant along normal directions in $ \Omega_{\delta/2}$, we have continuity of $ v_\eps ( y, \cdot ) $ at time $ t = T_\eps ( y ) $ for $ \eps > 0 $ sufficiently small. 
   Moreover since $ g_\eps \in \h^1 ( \partial \Omega ) $, we see that $ v_\eps \in \h^1 ( \partial \Omega \times (0, \delta ) ) $. Furthermore if $ t \leq T_\eps (y ) $ its derivative is given by
   \begin{align}
   \label{eq:partial_t_v_eps}
       \partial_t v_\eps ( y, t ) 
       & =
       \mathrm{sgn} ( \tr ( u_\eps (y ) ) - g_\eps ( y ) )
       \frac{ \sqrt{W ( v_\eps ( y, t ) ) }}{ \eps }
       \shortintertext{and}
       \notag
       \nabla_\tau v_\eps ( y, t ) 
       & =
       \mathrm{sgn} ( \tr ( u_\eps (y ) ) - g_\eps ( y ) )
       \frac{ \sqrt{ W ( v_\eps ( y, t ) ) } }{ \sqrt{ W ( g_\eps ( y ) ) } }
       \nabla_\tau g_\eps ( y )
   \end{align}
   as in equation (\ref{eq:est_for_tang_der_veps}).
   Finally we define
   \begin{equation*}
       \bar{u}_\eps ( x )  
       \coloneqq 
       \begin{cases}
           v_\eps ( \Phi^{-1} ( x ) ), 
           & \text{if } x \in \Omega_\delta
           \\
           u_\eps ( x )
           & \text{else}.
       \end{cases}
   \end{equation*}
   By the chain rule, $ \bar{u}_\eps ( x ) \in \h^1 ( \Omega ) $ with $ \tr \bar{u}_\eps ( y ) = g_\eps ( y ) $
   and since $ v_\eps (y, t )$ is constant for $ t \geq T_\eps ( y )$, we have
   \begin{equation*}
       \abs{ \nabla \bar{u}_\eps ( x ) }^2
       \leq
       \begin{cases}
           \abs{ \nabla u_\eps ( x ) }^2,
           & \text{if }x\in \Omega \setminus \Omega_\delta \text{ or } x = \Phi (y, t ), t \gtr T_\eps ( y ) 
           \\
           \abs{ \partial_t v ( y,t  ) }^2 
           +
           C \abs{ \nabla_\tau v_\eps ( y ) }^2,
           & \text{if } x \in \Omega_\delta, x= \Phi ( y, t ), t \leq T_\eps ( y ).
       \end{cases}
   \end{equation*}
   We finally bound the energy of the sequence $ \bar{u}_\eps $ by
   \begin{align}
   \notag
       &\limsup_{\eps \to 0 }
       \mathcal{F}_\eps^{(1)} ( \bar{u}_\eps )
       \\
       \notag
       \leq{} &
       \limsup_{ \eps \to 0 } E_\eps^{(1)} ( u_\eps )
       +
       \limsup_{ \eps \to 0 }
       \int_{ \partial \Omega }
       \int_0^{T_\eps ( y ) }
       \left(
       \frac{ 1 }{\eps } W ( v_\eps ( y, t ) ) +
       \eps \abs{ \partial_t v_\eps ( y, t ) }^2 
       \right) \omega ( y, t ) 
       \dd{ t }
       \dd{ \hm^{N-1} ( y ) }
       \\
       \label{eq:limsup_partition}
       &+
       C
       \limsup_{\eps \to 0}
       \int_{ \partial \Omega }
       \int_0^{T_\eps}  \eps \abs{ \nabla_\tau v_\eps ( y ,t) }^2
       \dd{t}
       \dd{ \hm^{N-1} ( y ) }.
   \end{align}
   By inequality (\ref{eq:energy_of_straightened_sequence}), the first summand is bounded by
   \begin{equation}
   \label{eq:limsup_first_summand}
        \limsup_{\eps \to 0 }
        E_\eps^{(1)} ( u_\eps ) 
        \leq
       E^{(1)} ( u ) + C \delta.
   \end{equation}
   For the second summand, we compute by equation (\ref{eq:partial_t_v_eps}), the fact that $ \mathrm{d}_W ( x, y ) \leq C \abs{ x - y }$ for $ x, y \in [a,b]$, $ \dw ( g_\eps, g ) \to 0 $ in $ \lp^1 ( \partial \Omega ) $ and $ u_\eps ( \Phi ( \cdot, t ) ) \to \tr u $ in $ \lp^1 ( \partial \Omega ) $ by equation (\ref{eq:convergence_of_straigt_seq}) that
   \begin{align}
       \notag
       \int_{ \partial \Omega }
       \int_{0}^{T_\eps ( y ) }
       \frac{
       W ( v_\eps ( y, t ) ) }{\eps}
       +
       \eps \abs{ \partial_t v_\eps ( y, t ) }^2
       \dd{ \hm^{N-1} ( y ) }
       \dd{ t }
       & =
       \int_{ \partial \Omega }
       \int_0^{ T_\eps ( y ) }
       2 \sqrt{ W ( v_\eps ( y, t ) ) }
       \abs{ \partial_t v_\eps ( y, t ) }
       \dd{ t }
       \dd{ \hm^{N-1} ( y ) }
       \\
       \notag
       & =
       \int_{ \partial \Omega }
       \dw ( g_\eps ( y ), u_\eps ( \Phi ( y, t_\eps ) ) )
       \dd{ \hm^{N-1} } ( y )
       \\
       \label{eq:limsup_second_summand}
       & \to 
       \int_{ \partial \Omega }
       \dw ( g ( y ) , \tr u ( y ) )
       \dd{ \hm^{N-1} } (y)
   \end{align}
   The last summand (\ref{eq:limsup_second_summand}) is an error which vanishes in the limit $ \eps \to 0 $ since it is up to a constant bounded from above by
   \begin{equation}
   \label{eq:limsup_third_summand}
       C \limsup_{\eps \to 0}
       \eps^2
       \int_{ \partial \Omega }
       \frac{ \abs{ \nabla_\tau g_\eps ( y ) }^2 }{ W ( g_\eps ( y ) )} \dw ( g_\eps ( y ) , \tr u_\eps ( y ) ) 
       \dd{ \hm^{N-1} ( y ) },
   \end{equation}
   which vanishes by assumption (\ref{eq:ass_tang_deriv_squared}).
   Thus combining inequality (\ref{eq:limsup_partition}), (\ref{eq:limsup_first_summand}), equality (\ref{eq:limsup_second_summand}) and inequality (\ref{eq:limsup_third_summand}), we obtain 
   \begin{equation*}
       \limsup_{ \eps \to 0 }
       \mathcal{F}_\eps^{(1) } ( u_\eps )
       \leq
       \mathcal{F}^{(1)} ( u ) 
       +
       C \delta .
   \end{equation*}
   Since we could have chosen $ \delta $ to be arbitrarily small without enlarging the constant $ C $, it follows from the lower semicontinuity of the $ \Gamma $-$\limsup $ that the desired $ \limsup $-inequality must already hold, which finishes the proof.
\end{proof}

\section*{Acknowledgements and Funding}
The authors are very grateful for the hospitality of the Center for Nonlinear Analysis at Carnegie Mellon University, which hosted them during February and March of 2025. The project was recommended by Irene Fonseca and Giovanni Leoni there, with which the authors had many fruitful discussions.
P.S. was supported
by funding from the Deutsche Forschungsgemeinschaft (DFG, German Research Foundation) under Germany’s Excellence Strategy – EXC-2047/1 – 390685813, the DFG project 211504053 - SFB 106 and the Global Math Exchange Program, Hausdorff School of Mathematics.
F.C. has been supported by the European Union - Next Generation EU, Mission 4 Component 1 CUP B53D2300930006, codice 2022J4FYNJ, PRIN2022 project
“Variational methods for stationary and evolution problems with singularities and interfaces”.
F.C. is a member of GNAMPA - INdAM.

\addcontentsline{toc}{section}{References}

\bibliography{biblio.bib}

\begin{thebibliography}{10}

\bibitem{AllenCahn79}
Samuel Allen and John~W. Cahn.
\newblock A microscopic theory for antiphase boundary motion and its application to antiphase domain coarsening.
\newblock {\em Acta Mater.}, 1979.

\bibitem{ambrosio_fusco_pallara_00_functions_of_bv}
Luigi Ambrosio, Nicola Fusco, and Diego Pallara.
\newblock {\em Functions of bounded variation and free discontinuity problems}.
\newblock Oxford Mathematical Monographs. The Clarendon Press, Oxford University Press, New York, 2000.

\bibitem{ansini_braides_chiad_03_gradient_theory}
Nadia Ansini, Andrea Braides, and Valeria Chiad\`o{}~Piat.
\newblock Gradient theory of phase transitions in composite media.
\newblock {\em Proc. Roy. Soc. Edinburgh Sect. A}, 133(2):265--296, 2003.

\bibitem{anzellotti_baldo_93_asymptotic_gamma_convergence}
Gabriele Anzellotti and Sisto Baldo.
\newblock Asymptotic development by {$\Gamma$}-convergence.
\newblock {\em Appl. Math. Optim.}, 27(2):105--123, 1993.

\bibitem{baldo_90_multiphase}
Sisto Baldo.
\newblock Minimal interface criterion for phase transitions in mixtures of {C}ahn-{H}illiard fluids.
\newblock {\em Ann. Inst. H. Poincar\'e{} C Anal. Non Lin\'eaire}, 7(2):67--90, 1990.

\bibitem{BatesFife}
Peter~W. Bates and Paul~C. Fife.
\newblock The dynamics of nucleation for the {C}ahn-{H}illiard equation.
\newblock {\em SIAM J. Appl. Math.}, 53(4):990--1008, 1993.

\bibitem{bethuel_brezis_helein_93_asymptotics_for_ginzburg_landau}
Fabrice Bethuel, Ha\"im Brezis, and Fr\'ed\'eric H\'elein.
\newblock Asymptotics for the minimization of a {G}inzburg-{L}andau functional.
\newblock {\em Calc. Var. Partial Differential Equations}, 1(2):123--148, 1993.

\bibitem{BloweyElliott}
James~F. Blowey and Charles~M. Elliott.
\newblock The cahn–hilliard gradient theory for phase separation with nonsmooth free energy. i. mathematical analysis.
\newblock {\em European J. Appl. Math.}, 1991.

\bibitem{caffarelli_cordoba_95_unfirom_convergence_of_a_singular_perturbation_problem}
Luis~A. Caffarelli and Antonio C\'ordoba.
\newblock Uniform convergence of a singular perturbation problem.
\newblock {\em Comm. Pure Appl. Math.}, 48(1):1--12, 1995.

\bibitem{CahnHillard58}
John~W. Cahn and John~E. Hillard.
\newblock Free energy of a nonuniform system i. interfacial free energy.
\newblock {\em J. Chem. Phys.}, 1958.

\bibitem{CahnHillard59}
John~W. Cahn and John~E. Hilliard.
\newblock Free energy of a nonuniform system. iii. nucleation in a two-component incompressible fluid.
\newblock {\em J. Chem. Phys.}, 1959.

\bibitem{DalMasoFonsecaLeoni15}
Gianni Dal~Maso, Irene Fonseca, and Giovanni Leoni.
\newblock Second order asymptotic development for the anisotropic {C}ahn-{H}illiard functional.
\newblock {\em Calc. Var. Partial Differential Equations}, 54(1):1119--1145, 2015.

\bibitem{de_giorgi_tullio_75_gamma_convergence}
Ennio De~Giorgi and Tullio Franzoni.
\newblock Su un tipo di convergenza variazionale.
\newblock {\em Atti Accad. Naz. Lincei Rend. Cl. Sci. Fis. Mat. Nat. (8)}, 58(6):842--850, 1975.

\bibitem{fonseca2025secondordergammalimitcahnhilliardfunctional1}
Irene Fonseca, Leonard Kreutz, and Giovanni Leoni.
\newblock Second-order $\gamma$-limit for the cahn-hilliard functional with dirichlet boundary conditions, i.
\newblock {\em arXiv}, 2025.

\bibitem{fonseca2025secondordergammalimitcahnhilliardfunctional}
Irene Fonseca, Leonard Kreutz, and Giovanni Leoni.
\newblock Second-order $\gamma$-limit for the cahn-hilliard functional with dirichlet boundary conditions, ii.
\newblock {\em arXiv}, 2025.

\bibitem{FonsecaTartar}
Irene Fonseca and Luc Tartar.
\newblock The gradient theory of phase transitions for systems with two potential wells.
\newblock {\em Proc. Roy. Soc. Edinburgh Sect. A}, 111(1-2):89--102, 1989.

\bibitem{gazoulis_24_gamma_convergence}
Dimitrios Gazoulis.
\newblock On the {$\Gamma$}-convergence of the allen–cahn functional with boundary conditions.
\newblock {\em Proc. Roy. Soc. Edinburgh Sect. A}, page 1–23, 2024.

\bibitem{CahnElliotNovick}
Charles M.~Elliott John W.~Cahn and Amy Novick-Cohen.
\newblock The cahn–hilliard equation with a concentration dependent mobility: motion by minus the laplacian of the mean curvature.
\newblock {\em European J. Appl. Math.}, 1996.

\bibitem{Leoni-Murray15}
Giovanni Leoni and Ryan Murray.
\newblock Second-order {$\Gamma$}-limit for the {C}ahn-{H}illiard functional.
\newblock {\em Arch. Ration. Mech. Anal.}, 219(3):1383--1451, 2016.

\bibitem{leoni_murray_19_second_order_2}
Giovanni Leoni and Ryan Murray.
\newblock Local minimizers and slow motion for the mass preserving {A}llen-{C}ahn equation in higher dimensions.
\newblock {\em Proc. Amer. Math. Soc.}, 147(12):5167--5182, 2019.

\bibitem{Modica87}
Luciano Modica.
\newblock Gradient theory of phase transitions with boundary contact energy.
\newblock {\em {A}rch. {R}ational {M}ech. {A}nal.}, 1987.

\bibitem{ModicaMortola77}
Luciano Modica and Stefano Mortola.
\newblock Un esempio di {$\Gamma \sp{-}$}-convergenza.
\newblock {\em Boll. Un. Mat. Ital. B (5)}, 14(1):285--299, 1977.

\bibitem{Owen88}
Nicholas~C. Owen.
\newblock Nonconvex variational problems with general singular perturbations.
\newblock {\em Trans. Amer. Math. Soc.}, 310(1):393--404, 1988.

\bibitem{RubinsteinSternberg90}
Nicholas~C. Owen, Jacob Rubinstein, and Peter Sternberg.
\newblock Minimizers and gradient flows for singularly perturbed bi-stable potentials with a {D}irichlet condition.
\newblock {\em Proc. Roy. Soc. London Ser. A}, 429(1877):505--532, 1990.

\bibitem{Sternberg88}
Peter Sternberg.
\newblock The effect of a singular perturbation on nonconvex variational problems.
\newblock {\em Arch. Rational Mech. Anal.}, 101(3):209--260, 1988.

\bibitem{sternberg_zumbrun_98_connectivity_of_phase_boundaries}
Peter Sternberg and Kevin Zumbrun.
\newblock Connectivity of phase boundaries in strictly convex domains.
\newblock {\em Arch. Rational Mech. Anal.}, 141(4):375--400, 1998.

\end{thebibliography}
\bibliographystyle{plain}

\end{document}